\documentclass[reqno,A4paper]{amsart}
\usepackage{amsmath}
\usepackage{amssymb}
\usepackage{amsthm}
\usepackage{enumerate}
\usepackage{mathrsfs}
\usepackage{eqlist}
\usepackage{array}
\usepackage{amsmath,amssymb,url}
\usepackage{tikz}
\usepackage{enumitem}

\setlength{\textwidth}{150mm}
\setlength{\textheight}{206mm}
\setlength{\oddsidemargin}{5mm}
\setlength{\evensidemargin}{5mm}

\numberwithin{equation}{section}

\theoremstyle{plain}
\newtheorem{theorem}{Theorem}[section]
\newtheorem{lemma}[theorem]{Lemma}
\newtheorem{proposition}[theorem]{Proposition}
\newtheorem{corollary}[theorem]{Corollary}

\theoremstyle{definition}
\newtheorem{definition}[theorem]{Definition}
\newtheorem{notation}[theorem]{Notation}
\newtheorem{remark}[theorem]{Remark}
\newtheorem{example}[theorem]{Example}





\begin{document}
	
	
	\title[Simple and sub-directly irreducible dBas]{Simple and sub-directly irreducible double Boolean algebras}
	
	
	
	\author[G. T. Kembang \and L. Kwuida \and E. R. A. Temgoua \and Y. L. J. Tenkeu]{G.T. Kembang* \and L. Kwuida** \and E.R.A. Temgoua*** \and Y.L.J. Tenkeu****}
	\newcommand{\acr}{\newline\indent}
	
	\address{\llap{*\,}Department of Mathematics\acr
		Faculty of Sciences\acr
		University of Yaounde 1\acr
		Yaounde\acr
		Cameroon}
	\email{tenkeugael@gmail.com}
	\address{\llap{**\,}Department of Business\acr
		Bern University of Applied Science\acr
		Bern\acr
		Switzerland}
	\email{leonard.kwuida@bfh.ch}
	\address{\llap{***\,}Department of Mathematics\acr
		Ecole Normale Supérieure de Yaoundé\acr
		University of Yaounde 1\acr
		Yaounde\acr
		Cameroon}
	\email{retemgoua@gmail.com}
	\address{\llap{****\,}Department of Mathematics\acr
		Faculty of Sciences\acr
		University of Yaounde 1\acr
		Yaounde\acr
		Cameroon}
	\email{ytenkeu2018@gmail.com}
	
	\thanks{This work was supported by Swiss National Science Foundation(SNF)
		Grant No Nr.: IZSEZO-219516/1}
	\subjclass{06B15, 06D50, 06E15, 06E75}
	\keywords{ Double Boolean algebra, Congruence, Sub-directly irreducible algebra, Simple algebra, Glued sum}

	
	


	\begin{abstract}
		Double Boolean algebras   are algebras $\underline{D}=(D;\sqcap,\sqcup,\neg,\lrcorner,\bot,\top)$ of type $(2,2,1,1,0,0)$
		introduced by Rudolf Wille to capture the equational theory of the algebra of protoconcepts. Every double Boolean algebra $\underline{D}$ contains two Boolean algebras denoted by $\underline{D}_{\sqcap}$ and $\underline{D}_{\sqcup}$. A double Boolean algebra $\underline{D}$ is said  pure if $D=D_{\sqcap}\cup D_{\sqcup}$, and trivial if $\bot\sqcup\bot=\top\sqcap\top$. In this work, we first show that a double Boolean algebra is pure and trivial if and only if it is a glued sum of two Boolean algebras; secondly, we characterize simple double Boolean algebras; and finally, we determine  up to isomorphism all  sub-directly irreducible algebras of some sub-classes of the variety of double Boolean algebras. 
	\end{abstract}
	
	\maketitle
	
	
	\section{Introduction}\label{sec:intro}
	
	Formal concept analysis constitutes a mathematical framework for
	knowledge representation and reasoning \cite{13}. In order to develop Boolean concept logic, which is a logic based on concepts as units of thought, it was necessary to define the negation of a concept. The first approach requires the negation of a concept to be a concept, and gives rise to two operations, a weak negation $^\triangle$ and a weak opposition $^\bigtriangledown$, 
	which lead to weakly dicomplemented lattices~\cite{6}. The second approach generalizes the notion of concept to that of protoconcept, with a goal to keep a correspondence between "negation" and "set complementation". It gives rise to a negation $"\neg"$ and an opposition$"\lrcorner"$. Protoconcepts lead to a new class of algebras called double Boolean algebra. Rudolf Wille showed that every double Boolean algebra can be quasi-embedded into a protoconcept algebra~\cite{14}. This representation theorem shows that the axioms of double Boolean algebras determine the equational theory of protoconcept algebras.  
	Since then, several researchers have focused on the study of this new algebraic structure: Vormbrock \cite{11},  Kwuida \cite{7}, Balbiani \cite{1}, Tenkeu  et al \cite{10}, Prosenjit  and Mohua  \cite{8, 9}. Given a variety $V$ of universal algebras, the determination of its simple and sub-directly irreducible elements plays a central role in its study because according to one of Birkhoff's theorem, any element of $V$ is isomorphic to a sub-direct product of sub-directly irreducible elements of $V$. 
	In \cite{11},  Vormbrock obtains that a finite double Boolean algebra $\underline{D}$ is sub-directly irreducible if and only if $\underline{D}$ is simple. But the characterization of simple double Boolean algebras is not yet known. To continue this investigation, we firstly characterize pure and trivial double Boolean algebras as glued sum of two Boolean algebras. Then, we characterize simple double Boolean algebras, and determine all sub-directly irreducible double Boolean algebras of some special sub-varieties of the variety of double Boolean algebras.
	
	Our work is organized as follows: Section \ref{sec2} recalls some basic notions from universal algebra and double Boolean algebras, necessary to understand our contribution. In  Section 3, we characterize  pure and trivial double Boolean algebras as well as 
	simple double Boolean algebras. In Section 4, we give the complete list of \textit{two}-element double Boolean algebras, and  determine, up to isomorphism, all  sub-directly irreducible double Boolean algebras of some sub-varieties of this variety. 
	The last section concludes the paper.
	
	\section{Preliminaries}\label{sec2}
	
	Let us recall some definitions and properties useful for the comprehension of this work. We begin with some tools from universal algebra, taken from \cite{3}. 
	
	\subsection{Some basic notions on universal algebras} 
	
	\begin{definition}
		Let $\underline{A}$ be an algebra of type $\mathcal{F}$ and let $\theta$ be an equivalence relation on $A$.
		$\theta$ is called a \textit{congruence relation} on $\underline{A}$ if $\theta$ satisfies the following compatibility property:
		for each $n$-ary function symbol $f\in\mathcal{F}$ and elements $a_i$, $b_i\in A$, if $a_i\theta b_i$ holds for $1 \le i \le n$, then $f^{A}(a_1,\cdots,a_n)\theta f^{A}(b_1,\cdots,b_n)$.
	\end{definition}
	
	\begin{notation}
		For an algebra $\underline{A}$, we denote by $Con(\underline{A})$ the set of all congruence relations on $\underline{A}$. 
	\end{notation}
	
	\begin{proposition}\cite[Theorem 5.3, p. 40]{3}
		If $\underline{A}$ is an algebra, then $(Con(\underline{A}),\subseteq)$ forms a complete lattice with $\Delta_{A}$ and $\nabla_{A}$ the smallest and the largest congruence relation respectively, where $\Delta_{A}=\{\, (x,x) \mid x\in A \,\}$ and $\nabla_{A}= A^{2}$. 
	\end{proposition}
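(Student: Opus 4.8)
The plan is to run the standard closure-system argument: a family of subsets of $A^2$ that contains $A^2$ and is closed under arbitrary intersections automatically forms a complete lattice under inclusion, with meet given by intersection. So the first step is to record the two extremes. Clearly $\nabla_A=A^2$ is an equivalence relation, and for every $n$-ary $f\in\mathcal{F}$ the compatibility clause holds trivially, since every pair $(a_i,b_i)$ lies in $\nabla_A$; hence $\nabla_A\in Con(\underline{A})$. Likewise $\Delta_A=\{(x,x)\mid x\in A\}$ is an equivalence relation, and it is a congruence because $a_i\,\Delta_A\,b_i$ forces $a_i=b_i$, so $f^A(a_1,\dots,a_n)=f^A(b_1,\dots,b_n)$; moreover $\Delta_A\subseteq\theta$ for every $\theta\in Con(\underline{A})$ by reflexivity. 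Thus $\Delta_A$ is the least and $\nabla_A$ the greatest element of $Con(\underline{A})$ with respect to $\subseteq$.

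The second step is the key lemma: $Con(\underline{A})$ is closed under arbitrary non-empty intersections. Given $\{\theta_j\mid j\in J\}\subseteq Con(\underline{A})$ with $J\neq\emptyset$, set $\theta=\bigcap_{j\in J}\theta_j$. Reflexivity, symmetry and transitivity are each inherited under intersection, so $\theta$ is an equivalence relation. For compatibility, suppose $a_i\,\theta\,b_i$ for $1\le i\le n$; then $a_i\,\theta_j\,b_i$ for every $j$, so $f^A(a_1,\dots,a_n)\,\theta_j\,f^A(b_1,\dots,b_n)$ for every $j$ since each $\theta_j$ is a congruence, and therefore $f^A(a_1,\dots,a_n)\,\theta\,f^A(b_1,\dots,b_n)$. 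Hence $\theta\in Con(\underline{A})$.

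Finally, assemble the complete lattice. For a family $\{\theta_j\}_{j\in J}\subseteq Con(\underline{A})$, define $\bigwedge_{j}\theta_j:=\bigcap_{j}\theta_j$, which is a congruence by the lemma and is plainly the greatest lower bound of the family in $Con(\underline{A})$; and define $\bigvee_{j}\theta_j:=\bigcap\{\,\psi\in Con(\underline{A})\mid\theta_j\subseteq\psi\text{ for all }j\,\}$. The set on the right is non-empty (it contains $\nabla_A$), so its intersection is again a congruence by the lemma, and a routine check shows it is the least upper bound. Taking $J=\emptyset$ recovers $\bigwedge\emptyset=\nabla_A$ and $\bigvee\emptyset=\Delta_A$. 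Hence $(Con(\underline{A}),\subseteq)$ is a complete lattice with bottom $\Delta_A$ and top $\nabla_A$. There is no real obstacle here; the only point deserving a word of care is the definition of joins — one cannot take unions, since a union of two congruences need not be transitive, so the join must be defined from above via the closure property just established.
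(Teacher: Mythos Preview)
Your proof is correct; the closure-system argument (intersections of congruences are congruences, so $Con(\underline{A})$ is a Moore family on $A^2$ with top $\nabla_A$) is exactly the standard route. The paper itself gives no proof of this proposition---it is quoted as a background result from Burris--Sankappanavar---and your argument is essentially the one found there.
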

	
	\begin{definition}
		An algebra $\underline{A}$ is said  \textit{simple} if $Con(\underline{A})=\{\Delta_{A}, \nabla_{A} \}.$
	\end{definition}
	
	\begin{definition}
		An algebra $\underline{A}$ is said \emph{congruence-distributive} if $Con(\underline{A})$ is a distributive lattice. 	
		A class $K$ of algebras is said \emph{congruence-distributive} if every member of $K$ is congruence-distributive.  
	\end{definition}
	
	
	
	\begin{definition}
		
		\begin{enumerate}
			\item  An algebra $\underline{A}$ is called a \emph{sub-direct product} of an indexed family $(\underline{A}_{i})_{i\in I}$ of algebras if:
			
			\begin{enumerate}[label=\textup{(\alph*)}]
				\item  $\underline{A}$ is a sub-algebra of $\prod\limits_{i\in I}\underline{A}_{i};$
				\item  $\pi_{i}(\underline{A})=\underline{A}_{i}$ for each $i\in I.$
			\end{enumerate}
			\item  An embedding $\alpha:\underline{A}\longrightarrow\prod\limits_{i\in I}\underline{A}_{i}$ is said \emph{sub-direct} if $\alpha(A)$ is a sub-direct product of the $(\underline{A}_{i})_{i\in I}$.
			\item  An algebra $\underline{A}$ is said \emph{sub-directly irreducible} if for every sub-direct embedding $\alpha : \underline{A}\longrightarrow \prod\limits_{i\in I}\underline{A}_{i}$, there is an $i\in I$ such that $\pi_{i}~\circ{}~\alpha : \underline{A}\longrightarrow \underline{A}_{i}$ is an isomorphism.
		\end{enumerate}
		
	\end{definition}
	
	\begin{proposition}\cite[Theorem 8.4, p. 63]{3}\label{t4} 
		An algebra $\underline{A}$ is sub-directly irreducible iff $\underline{A}$ is the \textit{one}-element algebra or there is a minimum
		congruence in $Con(\underline{A})\backslash\{\Delta_{A} \}$. In the latter case the minimum element is $\bigcap(Con (\underline{A})\backslash\{\Delta_{A} \}).$
	\end{proposition}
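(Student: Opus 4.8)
The plan is to prove both directions through the standard dictionary between subdirect decompositions of $\underline{A}$ and families of congruences with intersection $\Delta_A$. First I would record a dichotomy. Since $(Con(\underline{A}),\subseteq)$ is a complete lattice (the proposition cited above), the set $S:=Con(\underline{A})\setminus\{\Delta_A\}$ has an infimum $\mu:=\bigcap S$ lying in $Con(\underline{A})$, and either $\mu=\Delta_A$ or $\mu\neq\Delta_A$; in the second case $\mu\in S$, so $\mu$ is the minimum of $S$. Thus ``there is a minimum congruence in $Con(\underline{A})\setminus\{\Delta_A\}$'' is equivalent to ``$\bigcap(Con(\underline{A})\setminus\{\Delta_A\})\neq\Delta_A$'', and when it holds the minimum equals $\bigcap(Con(\underline{A})\setminus\{\Delta_A\})$; this already settles the last sentence of the statement.

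For the implication $(\Leftarrow)$, the one-element case is immediate (it is subdirectly irreducible by convention, every subdirect factor over a nonempty index set being one-element), so assume $\underline{A}$ has a minimum congruence $\mu>\Delta_A$ and let $\alpha:\underline{A}\to\prod_{i\in I}\underline{A}_i$ be a subdirect embedding. Set $\theta_i:=\ker(\pi_i\circ\alpha)$; since $\alpha$ is injective, $\bigcap_{i\in I}\theta_i=\ker\alpha=\Delta_A$. If $\theta_i\neq\Delta_A$ for every $i$, then each $\theta_i\in S$, hence $\theta_i\supseteq\mu$, giving $\Delta_A=\bigcap_i\theta_i\supseteq\mu>\Delta_A$, a contradiction. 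So $\theta_j=\Delta_A$ for some $j$, whence $\pi_j\circ\alpha$ is injective; it is also surjective because $\pi_j(\alpha(A))=\underline{A}_j$ by subdirectness, and a bijective homomorphism is an isomorphism. Hence $\pi_j\circ\alpha$ is an isomorphism, so $\underline{A}$ is subdirectly irreducible.

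For $(\Rightarrow)$ I would argue by contraposition: suppose $\underline{A}$ is not the one-element algebra and has no minimum congruence above $\Delta_A$. By the dichotomy, $\bigcap S=\Delta_A$, and since $\underline{A}$ is non-trivial we have $\nabla_A\neq\Delta_A$, so $\nabla_A\in S$ and in particular $S\neq\emptyset$. Form $\alpha:\underline{A}\to\prod_{\theta\in S}\underline{A}/\theta$ by $x\mapsto(x/\theta)_{\theta\in S}$. Each coordinate map is the canonical surjection onto $\underline{A}/\theta$, so $\alpha$ is a subdirect map; moreover $\ker\alpha=\bigcap_{\theta\in S}\theta=\Delta_A$, so $\alpha$ is a subdirect embedding. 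But for each $\theta\in S$ the map $\pi_\theta\circ\alpha$ has kernel $\theta\neq\Delta_A$, hence is not injective and not an isomorphism. Therefore $\underline{A}$ is not subdirectly irreducible, which completes the contrapositive.

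I do not anticipate a genuine obstacle: the whole content is the translation between subdirect embeddings and intersections of congruences, together with the completeness of $Con(\underline{A})$ recalled above. The only points needing care are the edge case of the one-element (trivial) algebra, which must be admitted separately because for it $S=\emptyset$, and the routine facts that $\ker\alpha$ equals the intersection of the coordinate kernels and that a bijective homomorphism is an isomorphism.
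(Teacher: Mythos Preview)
Your proof is correct and is essentially the standard argument found in Burris--Sankappanavar. Note, however, that the paper does not supply its own proof of this proposition: it is stated as a preliminary result with a citation to \cite[Theorem~8.4]{3} and left unproved, so there is no ``paper's approach'' to compare against. Your treatment of the edge case of the one-element algebra is slightly informal (the empty index set requires a convention, as you acknowledge), but this matches how the cited source handles it.
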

	
	\begin{notation}
		We denote by $\mathit{\underline{2}}$ the $two$-element Boolean algebra.
	\end{notation}
	\begin{lemma}\cite[Corollary 1.12, p. 134]{3}\label{l3} 
		The Boolean algebra $\textit{\underline{2}}$	is, up to isomorphism, the only sub-directly irreducible Boolean
		algebra having more than one element.
	\end{lemma}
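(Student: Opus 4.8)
The plan is to prove the two implications separately, both via Proposition~\ref{t4} together with the standard correspondence between congruences and ideals of a Boolean algebra.

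First I would check that $\underline{2}$ is indeed subdirectly irreducible. Since $\underline{2}$ has only two elements, the only equivalence relations on its universe are $\Delta$ and $\nabla$, and both are (trivially) congruences; hence $\underline{2}$ is simple, and as it has more than one element we get $Con(\underline{2})\setminus\{\Delta\}=\{\nabla\}$, which has a least element. By Proposition~\ref{t4}, $\underline{2}$ is subdirectly irreducible.

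For the converse, let $\underline{B}$ be a subdirectly irreducible Boolean algebra with $|B|>1$; the goal is $|B|=2$. Suppose not, so there is $c\in B$ with $0<c<1$; then the complement $c'$ also satisfies $0<c'<1$ and $c\wedge c'=0$. Recall that in a Boolean algebra congruences correspond bijectively to ideals, the smallest congruence identifying $0$ with an element $a$ being $\Theta(0,a)$, given by $x\mathrel{\Theta(0,a)}y \iff (x\wedge y')\vee(x'\wedge y)\le a$ (it corresponds to the principal ideal $\downarrow a$). Since $c\neq 0$ and $c'\neq 0$, both $\Theta(0,c)$ and $\Theta(0,c')$ are distinct from $\Delta$. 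On the other hand, if $x\mathrel{\Theta(0,c)}y$ and $x\mathrel{\Theta(0,c')}y$, then the symmetric difference $(x\wedge y')\vee(x'\wedge y)$ lies below both $c$ and $c'$, hence below $c\wedge c'=0$, so $x=y$; that is, $\Theta(0,c)\cap\Theta(0,c')=\Delta$. Thus $Con(\underline{B})\setminus\{\Delta\}$ contains two congruences whose meet is $\Delta$, so it has no least element, contradicting Proposition~\ref{t4}. Hence $|B|=2$, and since every two-element Boolean algebra is isomorphic to $\underline{2}$, the claim follows.

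The only mildly delicate ingredient is the description of the principal congruence $\Theta(0,a)$, i.e. the congruence–ideal correspondence: I would either cite it as a standard fact about Boolean algebras, or derive it quickly from the observation that a Boolean algebra has a Mal'cev term, so each congruence is determined by its $0$-class, which is readily seen to be an ideal, the class $\downarrow a$ giving the smallest congruence with $0\mathrel{\theta}a$. Everything else in the argument is routine lattice arithmetic.
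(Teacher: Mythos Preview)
Your proof is correct and is essentially the standard argument. However, the paper does not prove this lemma at all: it is stated with the citation \cite[Corollary 1.12, p.~134]{3} and used as a black box. So there is no ``paper's own proof'' to compare against; the result is simply imported from Burris--Sankappanavar.

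For what it is worth, your argument is the textbook one: show $\underline{2}$ is simple, and for a Boolean algebra with at least three elements pick $0<c<1$ and use the two principal congruences $\Theta(0,c)$ and $\Theta(0,c')$ (equivalently, the principal ideals $\downarrow c$ and $\downarrow c'$ via Proposition~\ref{p10}) to exhibit two nontrivial congruences with trivial intersection, contradicting Proposition~\ref{t4}. The only remark is that you could streamline the write-up by invoking Proposition~\ref{p10} directly for the congruence--ideal correspondence rather than describing $\Theta(0,a)$ by hand, since the paper already records that isomorphism.
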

	
	\begin{definition}\label{d5}
		Let $\underline{B}=(B; \wedge, \vee, ~'~, 0, 1)$ be a Boolean algebra.
		A subset $I$ of $B$ is called an \emph{ideal}  if it satisfies the following three conditions :
		\begin{enumerate}[label=\textup{(\alph*)}]
			\item  $0\in I$,
			\item  $\forall x,y\in B$, $x,y\in I\implies x\vee y\in I$,
			\item  $\forall x,y\in B$, $y\in I$, $x\le y\implies x\in I$.
		\end{enumerate}
		
		The notion of filter is defined dually.
	\end{definition}
	
	\begin{notation}
		Let $\underline{B}=(B; \wedge, \vee, ~'~, 0, 1)$ be a Boolean algebra. We denote by $\mathcal{I}(\underline{B})$ (resp. $\mathcal{F}(\underline{B})$ ) the set of all ideals (resp.  filters) of $\underline{B}$.
	\end{notation}
	
	\begin{proposition}\cite[Lemma 3.10, p. 148]{3} \label{p10} Let $\underline{B}$ be a Boolean algebra. Then $(Con(\underline{B}),\subseteq)$, $(\mathcal{I}(\underline{B});\subseteq)$ and $(\mathcal{F}(\underline{B});\subseteq)$ are isomorphic distributive lattices. 
	\end{proposition}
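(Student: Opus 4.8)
The plan is to produce mutually inverse order isomorphisms $Con(\underline{B})\cong\mathcal{I}(\underline{B})$, then transfer to $\mathcal{F}(\underline{B})$ by complementation, and finally check distributivity on whichever of the three lattices is most convenient, namely $\mathcal{I}(\underline{B})$.

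First I would send a congruence $\theta$ to its $0$-block $0/\theta=\{x\in B : x\,\theta\,0\}$; compatibility of $\theta$ with $\vee$ together with condition (c) of Definition~\ref{d5} shows at once that $0/\theta\in\mathcal{I}(\underline{B})$. Conversely I would send an ideal $I$ to the relation $\theta_I$ defined by $x\,\theta_I\,y:\iff(x\wedge y')\vee(x'\wedge y)\in I$, i.e. the symmetric difference $x\,\triangle\,y$ belongs to $I$. Reflexivity holds because $x\,\triangle\,x=0\in I$, symmetry because $\triangle$ is commutative, and transitivity because $x\,\triangle\,z\le(x\,\triangle\,y)\vee(y\,\triangle\,z)$ in any Boolean algebra, using conditions (b) and (c) of Definition~\ref{d5}. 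Compatibility of $\theta_I$ with $\wedge$, $\vee$ and $'$ then follows from the Boolean identities $(x\wedge a)\,\triangle\,(y\wedge a)\le x\,\triangle\,y$, $(x\vee a)\,\triangle\,(y\vee a)\le x\,\triangle\,y$ and $x'\,\triangle\,y'=x\,\triangle\,y$.

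Next I would check that the two assignments are inverse to each other: $0/\theta_I=I$ because $x\,\triangle\,0=x$, and $\theta_{0/\theta}=\theta$ because $x\,\theta\,y$ is equivalent to $x\,\triangle\,y\ \theta\ y\,\triangle\,y=0$ (using compatibility in both directions). As both maps are clearly monotone, this yields $Con(\underline{B})\cong\mathcal{I}(\underline{B})$ as lattices. Composing with the order isomorphism $I\mapsto\{x' : x\in I\}$ from $\mathcal{I}(\underline{B})$ onto $\mathcal{F}(\underline{B})$ (equivalently, $\theta\mapsto 1/\theta$) then gives $Con(\underline{B})\cong\mathcal{F}(\underline{B})$.

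Finally, for distributivity it is enough to argue in $\mathcal{I}(\underline{B})$: there meet is $\cap$ and $I\vee J=\{z\in B : z\le a\vee b\text{ for some }a\in I,\ b\in J\}$, and a short computation using the distributive law of $\underline{B}$ gives $I\cap(J\vee K)=(I\cap J)\vee(I\cap K)$; alternatively one may invoke that congruence lattices of lattices are distributive. I expect the only genuine work to be the block of compatibility and identity checks for $\theta_I$ in the second paragraph; everything else is bookkeeping, and the statement as a whole is classical (it is exactly \cite[Lemma 3.10, p. 148]{3}).
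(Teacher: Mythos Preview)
Your argument is correct and is essentially the standard one: the bijection $\theta\mapsto 0/\theta$ with inverse $I\mapsto\theta_I$ via symmetric difference, transferred to filters by complementation, together with the computation of $\cap$ and $\vee$ in $\mathcal{I}(\underline{B})$ for distributivity. Note, however, that the paper does not supply its own proof of this proposition; it is quoted from \cite[Lemma~3.10, p.~148]{3} as a preliminary fact, so there is nothing in the paper to compare your approach against beyond the reference itself, and what you wrote is exactly the classical argument one finds there.
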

	
	Now, we end this section with some preliminaries on double Boolean algebras. For further information, we refer the reader to \cite{7, 8, 13, 14}.
	\subsection{Double Boolean algebras}
	
	\begin{definition}
		An algebra $\underline{D}=(D;\sqcap,\sqcup,\neg,\lrcorner,\bot,\top)$ of type $(2,2,1,1,0,0)$ is called \emph{double Boolean algebra (dBa for short)} if it satisfies the following  identities : 
		\[ \begin{array}{clcl}
			(1a)& (x\sqcap x)\sqcap y= x\sqcap y&(1b)& (x\sqcup x)\sqcup y= x\sqcup y\\
			(2a)& x\sqcap y=y\sqcap x& (2b)& x\sqcup y=y\sqcup x\\
			(3a)& x\sqcap(y\sqcap z)=(x\sqcap y)\sqcap z& (3b)& x\sqcup(y\sqcup z)=(x\sqcup y)\sqcup z\\
			(4a)& x\sqcap(x\sqcup y)=x\sqcap x& (4b)& x\sqcup(x\sqcap y)=x\sqcup x\\
			(5a)& x\sqcap(x\vee y)=x\sqcap x& (5b)& x\sqcup(x\wedge y)=x\sqcup x\\
			(6a)& x\sqcap (y\vee z)= (x\sqcap y)\vee (x\sqcap z)& (6b)& x\sqcup (y\wedge z)= (x\sqcup y)\wedge (x\sqcup z)\\
			(7a)& \neg\neg (x\sqcap y)= x\sqcap y& (7b)& \lrcorner\lrcorner (x\sqcup y)=x\sqcup y\\
			(8a)& \neg (x\sqcap x)= \neg x& (8b)& \lrcorner (x\sqcup x)=\lrcorner x\\
			(9a)& x\sqcap\neg x=\bot & (9b)& x\sqcup\lrcorner x=\top\\
			(10a)& \neg\bot=\top\sqcap\top& (10b)& \lrcorner\top=\bot\sqcup\bot\\
			(11a)&\neg\top=\bot& (11b)& \lrcorner\bot=\top\\
		\end{array}\]
		$$(12)~ (x\sqcap x)\sqcup (x\sqcap x)=(x\sqcup x)\sqcap (x\sqcup x)$$ 
		where $x\vee y=\neg(\neg x\sqcap\neg y)$ and $x\wedge y=\lrcorner(\lrcorner x\sqcup\lrcorner y).$
	\end{definition}
	
	On a dBa $\underline{D}=(D;\sqcap, \sqcup, \neg, \lrcorner, \bot, \top)$, a  relation "$\sqsubseteq$" is defined as follows:
	\[x\sqsubseteq y :\iff x\sqcap y=x\sqcap x~\text{and}~x\sqcup y=y\sqcup y.\]
	
	The relation "$\sqsubseteq$" is a \emph{quasi-order} on $D$.
	
	\begin{definition}\label{d1}
		A dBa $\underline{D}=(D;\sqcap,\sqcup,\neg,\lrcorner,\bot,\top)$ is called :
		\begin{enumerate}
			\item   \emph{pure} if for all $x\in D$, either $x\sqcap x=x$ or $x\sqcup x=x$;
			\item   \emph{trivial}\footnote{In universal algebra, a trivial algebra is a \textit{one}-element algebra. But in this work a trivial dBa will always refer to $(2)$ of Definition \ref{d1}. } if  $\top\sqcap \top=\bot\sqcup \bot$;
			\item   \emph{regular (or contextual)} if the quasi-order "$\sqsubseteq$" is an order relation.
		\end{enumerate}
	\end{definition}
	
	\begin{notation}
		We set:	$D_{\sqcap}:=\{x\in D \mid x\sqcap x= x \}$, $D_{\sqcup}:=\{x\in D \mid x\sqcup x= x \}$ and $D_{p}=D_{\sqcap}\cup D_{\sqcup}.$
	\end{notation}
	
	\begin{proposition}\cite[Theorem 1, p. 6]{11}\label{p8} Let $\underline{D}$ be a dBa.
		\begin{enumerate}
			\item  $\underline{D}_{\sqcap}:=(D_{\sqcap}; \sqcap, \vee, \neg, \bot, \neg\bot)$ is a Boolean algebra whose order relation is the restriction of $\sqsubseteq$ to $\underline{D}_{\sqcap}$  and is denoted by $\sqsubseteq_{\sqcap}$.
			\item  $\underline{D}_{\sqcup}:=(D_{\sqcup};\wedge, \sqcup, \lrcorner,\lrcorner\top, \top)$ is a Boolean algebra whose order relation is the restriction of $\sqsubseteq$ to $\underline{D}_{\sqcup}$  and is denoted by $\sqsubseteq_{\sqcup}$.
			\item  For any $x,y\in D$, $x\sqsubseteq y$ if and only if $x\sqcap x\sqsubseteq y\sqcap y$ and $x\sqcup x\sqsubseteq y\sqcup y.$
			\item  $\underline{D}_{p}=(D_{p}; \sqcap, \sqcup, \neg, \lrcorner, \bot, \top)$ is the largest pure sub-algebra of $\underline{D}$.
		\end{enumerate}
	\end{proposition}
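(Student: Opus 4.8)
The plan is to treat (1) and (2) together: the axiom list is symmetric under the duality exchanging $(\sqcap,\vee,\neg,\bot)$ with $(\sqcup,\wedge,\lrcorner,\top)$, so it is enough to prove (1) and then dualize. For (1) I would proceed in three steps: (i) show $D_{\sqcap}$ is closed under $\sqcap$, $\vee$, $\neg$ and contains $\bot$ and $\neg\bot$; (ii) verify from the identities (1a)--(12) that $(D_{\sqcap};\sqcap,\vee,\neg,\bot,\neg\bot)$ is a Boolean algebra; (iii) identify its order with $\sqsubseteq$ restricted to $D_{\sqcap}$. For step (i) I would first extract a few auxiliary identities: from (1a), (2a), (3a) every term $x\sqcap y$ is idempotent, so $x\sqcap y\in D_{\sqcap}$; from (7a), (8a) one gets that $\neg x=\neg(x\sqcap x)$ is always idempotent, hence $\neg[D]\subseteq D_{\sqcap}$, and also $\neg\neg u=u$ for $u\in D_{\sqcap}$, which gives the reverse inclusion $D_{\sqcap}=\neg[D]$; consequently $x\vee y=\neg(\neg x\sqcap\neg y)\in\neg[D]=D_{\sqcap}$. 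Finally $\bot=\neg\top\in D_{\sqcap}$ by (11a) and $\neg\bot\in\neg[D]=D_{\sqcap}$.

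For step (ii), on $D_{\sqcap}$ put $a\le b:\iff a\sqcap b=a$; reflexivity, antisymmetry and transitivity are routine from idempotence, (2a), (3a). Then $\sqcap$ is the meet (direct use of (1a)--(3a)); $\vee$ is the join because $a\sqcap(a\vee b)=a\sqcap a=a$ by (5a) gives $a\le a\vee b$ (and symmetrically $b\le a\vee b$), while if $a\le c$ and $b\le c$ then (6a) gives $c\sqcap(a\vee b)=(c\sqcap a)\vee(c\sqcap b)=a\vee b$, i.e. $a\vee b\le c$. Distributivity is (6a). For the bounds, (4a) with (9b) yields the helper identity $x\sqcap\top=x\sqcap x$, so $a\sqcap\neg\bot=a\sqcap(\top\sqcap\top)=a$ and $\neg\bot$ is the top, while $\bot\sqcap a=(a\sqcap\neg a)\sqcap a=a\sqcap\neg a=\bot$ by (9a), so $\bot$ is the bottom. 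For complements, $a\sqcap\neg a=\bot$ by (9a) and $a\vee\neg a=\neg(\neg a\sqcap\neg\neg a)=\neg(\neg a\sqcap a)=\neg\bot$ using $\neg\neg a=a$; since $\neg a\in D_{\sqcap}$, this makes $\underline{D}_{\sqcap}$ a complemented distributive lattice, hence a Boolean algebra. For step (iii), for $a,b\in D_{\sqcap}$ the relation $a\sqsubseteq b$ unfolds to $a\sqcap b=a$ together with $a\sqcup b=b\sqcup b$; but the latter follows from the former by (4b), since $b\sqcup(b\sqcap a)=b\sqcup b$, so $a\sqsubseteq b\iff a\le b$. Part (2) is the dual statement.

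For part (3), the key is the identity $(x\sqcap x)\sqcap(y\sqcap y)=x\sqcap y$ (and dually $(x\sqcup x)\sqcup(y\sqcup y)=x\sqcup y$), again from (1a), (2a), (3a). Using part (1) to read $\sqsubseteq$ on $D_{\sqcap}$ and on $D_{\sqcup}$ as the respective Boolean orders, $x\sqcap x\sqsubseteq y\sqcap y$ is equivalent to $(x\sqcap x)\sqcap(y\sqcap y)=x\sqcap x$, i.e. to $x\sqcap y=x\sqcap x$, and $x\sqcup x\sqsubseteq y\sqcup y$ is equivalent to $x\sqcup y=y\sqcup y$. Since these are precisely the two defining clauses of $x\sqsubseteq y$, the equivalence in (3) holds in both directions.

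For part (4), note $D_p=D_{\sqcap}\cup D_{\sqcup}$, and all closure facts are already in hand: for every $x,y\in D$ we have $x\sqcap y\in D_{\sqcap}$, $x\sqcup y\in D_{\sqcup}$, $\neg x\in D_{\sqcap}$, $\lrcorner x\in D_{\sqcup}$, and $\bot=\neg\top\in D_{\sqcap}$, $\top=\lrcorner\bot\in D_{\sqcup}$ by (11a), (11b). Hence $\underline{D}_p$ is a subalgebra of $\underline{D}$, so it is again a dBa; it is pure since each of its elements lies in $D_{\sqcap}$ or $D_{\sqcup}$, i.e. satisfies $x\sqcap x=x$ or $x\sqcup x=x$; and it is largest because in any pure subalgebra $\underline{S}$ each $s\in S$ satisfies $s\sqcap s=s$ or $s\sqcup s=s$, hence $s\in D_p$. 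The hard part throughout is step (ii): there is no conceptual shortcut, only the careful bookkeeping of extracting exactly the right helper identities — idempotence of binary $\sqcap$, $\neg\neg u=u$ on $D_{\sqcap}$, $x\sqcap\top=x\sqcap x$ — and then invoking precisely (5a), (6a), (9a) to pin down meet, join, distributivity and complements, all while keeping the two Boolean layers $D_{\sqcap}$, $D_{\sqcup}$ and the quasi-order $\sqsubseteq$ strictly apart.
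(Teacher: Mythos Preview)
Your argument is correct and fully self-contained. Note, however, that the paper does not prove this proposition at all: it is stated in the preliminaries with a citation to Vormbrock~\cite[Theorem~1]{11}, so there is no ``paper's own proof'' to compare against. Your write-up therefore supplies what the paper omits. The only minor remark is that in step~(iii) you might state explicitly that the implication also goes the other way (if $a\sqsubseteq b$ then in particular $a\sqcap b=a\sqcap a=a$, hence $a\le b$), though this is immediate from the definition; everything else---the closure facts, the helper identities $\neg\neg x=x\sqcap x$ and $x\sqcap\top=x\sqcap x$, the verification of meet/join/bounds/complements via (5a), (6a), (9a), (10a), and the reduction in part~(3) via $(x\sqcap x)\sqcap(y\sqcap y)=x\sqcap y$---is carried out cleanly.
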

	
	
	\begin{notation}
		For a dBa $\underline{D}$, $X\subseteq D_{\sqcap}$ (resp. $Y\subseteq D_{\sqcup}$), we denote by $I(X)$ (resp. $F(Y)$) the ideal of $\underline{D}_{\sqcap}$ (resp. filter of $\underline{D}_{\sqcup}$) generated by $X$ (resp. $Y$). If $X=\{x\}$ (resp. $Y=\{ y\}$), then $I(X)$ (resp. $F(Y)$) is denoted by $I(x)$ (resp. $F(y)$).
	\end{notation}
	\begin{example}\label{e1}\text{}
		
		\begin{enumerate}
			\item  The algebra  $\underline{D}_{3,I}:=(D_{3,I};\sqcap,\sqcup,\neg,\lrcorner,\bot,\top)$ with $D_{3,I}:=\{\bot, a, \top \}$ is defined by the Cayley tables in  Figure~\ref{OD3I}. It is a \emph{pure, trivial and regular dBa}. Its Hasse diagram is given by Figure~\ref{D3I}. Moreover,
			$D_{3,I_\sqcap}=\{\bot, a\}$ and $D_{3,I_\sqcup}=\{a,\top\}$. 
			
			\begin{figure}[ht]
				\begin{center}
					\begin{tikzpicture}[scale=.5]
						\node (one) at (0,1.5) {$\top$};
						\node (a) at (0,0) {$a$};
						\node (zero) at (0,-1.5) {$\bot$};
						\draw (zero)--(a)--(one);
					\end{tikzpicture}
					\caption{ A \emph{three}-element pure, trivial and regular dBa}\label{D3I}
				\end{center}
			\end{figure}
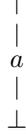 
			
			\begin{figure}[ht]
				\begin{tabular}{|c|ccc|}
					\hline 
					$\sqcap$ & $\bot$ & $a$ & $\top$\\
					\hline
					$\bot$ & $\bot$ & $\bot$ & $\bot$\\
					$a$ & $\bot$ & $a$ & $a$\\
					$\top$ & $\bot$ & $a$ & $a$\\
					\hline
				\end{tabular} 
				\qquad
				\begin{tabular}{|c|ccc|}
					\hline 
					$\sqcup$ & $\bot$ & $a$ &$\top$\\ 
					\hline
					$\bot$ & $a$ & $a$ & $\top$\\
					$a$ & $a$ & $a$  & $\top$\\	
					$\top$ & $\top$ & $\top$ & $\top$ \\
					\hline
				\end{tabular} 
				\qquad
				\begin{tabular}{|c|ccc|}
					\hline 
					$x$& $\bot$ & $a$ & $\top$\\
					\hline
					$\neg x$ & $a$ & $\bot$ & $\bot$\\
					$\lrcorner x$ & $\top$ & $\top$ & $a$\\
					\hline
					
				\end{tabular}
				\caption{ Operations $\sqcap$, $\sqcup$, $\neg$ and $\lrcorner$ of $\underline{D}_{3,I}$}\label{OD3I}
			\end{figure}
			
			\item The algebra $\underline{D}_{3,II}:=(\{\bot, a, \top \}; \sqcap,\sqcup,\neg,\lrcorner,\bot,\top)$ given by the Cayley tables on 	Figure~\ref{OD3II}, is a \emph{trivial dBa which is neither pure nor regular}.
			Moreover,    $D_{3,II_\sqcap}=\{\bot, \top \}$ and $D_{3,II_\sqcup}=\{\top\}$.
			\begin{figure}[th]
				\begin{tabular}{|c|ccc|}
					\hline 
					$\sqcap$ & $\bot$ & $a$ & $\top$\\
					\hline
					$\bot$ & $\bot$ & $\bot$ & $\bot$\\
					$a$ & $\bot$ & $\bot$ & $\bot$\\
					$\top$ & $\bot$ & $\bot$ & $\top$\\
					\hline
				\end{tabular} 
				\qquad
				\begin{tabular}{|c|ccc|}
					\hline 
					$\sqcup$ & $\bot$ & $a$ &$\top$\\
					\hline
					$\bot$ & $\top$ & $\top$ & $\top$\\
					$a$ & $\top$ & $\top$  & $\top$\\	
					$\top$ & $\top$ & $\top$ & $\top$ \\
					\hline
				\end{tabular} 	
				\qquad
				\begin{tabular}{|c|ccc|}
					\hline 
					$x$& $\bot$ & $a$ & $\top$\\
					\hline
					$\neg x$ & $\top$ & $\top$ & $\bot$\\
					$\lrcorner x$ & $\top$ & $\top$ & $\top$\\
					\hline
				\end{tabular}
				\caption{Operations $\sqcap$, $\sqcup$, $\neg$ and $\lrcorner$ of $\underline{D}_{3,II}$}\label{OD3II}
			\end{figure}
			
			\item The algebra $\underline{D}_{6}:=(\{\bot, a, b, c, d, \top \}; \sqcap,\sqcup,\neg,\lrcorner,\bot,\top)$ given by the Cayley tables on Figure~\ref{OD6I}, is a \emph{pure and regular dBa which is not trivial}. Moreover, $D_{6_\sqcap}=\{\bot, a,b,c\}$ and  $D_{6_\sqcup}=\{a,c,d,\top\}$. Its Hasse diagram is given in Figure~\ref{D6}.
			
			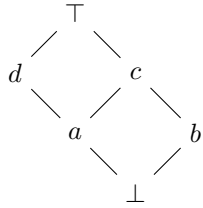
\begin{figure}[h]
				\begin{center}
					\begin{tikzpicture}[scale=.4]
						\node (one) at (0,2) {$\top$};
						\node (d) at (-2,0) {$d$};
						\node (c) at (2,0) {$c$};
						\node (a) at (0,-2) {$a$};
						\node (b) at (4,-2) {$b$};
						\node (zero) at (2,-4) {$\bot$};
						\draw (a)--(zero)--(b)--(c)--(a)--(d)--(one)--(c);
					\end{tikzpicture}
					\caption{A \emph{six}-element pure, regular and non trivial dBa}\label{D6}
				\end{center}
			\end{figure}
			
			
			\begin{figure}[ht]
				\begin{tabular}{|c|cccccc|}
					\hline 
					$\sqcap$ & $\bot$ & $a$ & $b$ &  $c$ & $d$ & $\top$\\
					\hline
					$\bot$ & $\bot$ & $\bot$ & $\bot$ & $\bot$ & $\bot$ & $\bot$\\
					$a$ & $\bot$ & $a$ & $\bot$ & $a$ & $a$ & $a$ \\
					$b$ & $\bot$ & $\bot$ & $b$ & $b$ & $\bot$ & $b$\\	
					$c$ & $\bot$ & $a$ & $b$ & $c$ & $a$ & $c$ \\
					$d$ & $\bot$ & $a$ & $\bot$ & $a$ & $a$ & $a$ \\
					$\top$ & $\bot$ & $a$ & $b$ & $c$ & $a$ & $c$\\
					\hline
				\end{tabular} 
				\qquad
				\begin{tabular}{|c|cccccc|}
					\hline 
					$\sqcup$ & $\bot$ & $a$ & $b$ & $c$ & $d$ & $\top$\\
					\hline
					$\bot$ & $a$ & $a$ & $c$ & $c$ & $d$ & $\top$ \\
					$a$ & $a$ & $a$ & $c$ & $c$ & $d$ & $\top$\\
					$b$ & $c$ & $c$ & $c$ & $c$ & $\top$ & $\top$ \\	
					$c$ & $c$ & $c$ & $c$ & $c$ & $\top$ & $\top$\\
					$d$ & $d$ & $d$ & $\top$ & $\top$ & $d$ & $\top$  \\
					$\top$ & $\top$ & $\top$ & $\top$ & $\top$ & $\top$ & $\top$\\
					\hline
				\end{tabular}
				
				\begin{tabular}{|c|cccccc|}
					\hline 
					$x$& $\bot$ & $a$ & $b$ & $c$ & $d$ & $\top$\\
					\hline
					$\neg x$ & $c$ & $b$ &  $a$ & $\bot$ & $b$ &$\bot$\\
					$\lrcorner x$ & $\top$ &  $\top$ & $d$ & $d$ & $c$ & $a$\\
					\hline
				\end{tabular}
				\caption{Operations $\sqcap$, $\sqcup$, $\neg$ and $\lrcorner$ of $\underline{D}_{6}$}\label{OD6I}
			\end{figure}
			
					
		\end{enumerate}
	\end{example}
	
	\newpage
	The following proposition is very useful when performing calculations in dBas.
	\begin{proposition}\label{p4}
		Let $\underline{D}$ be a dBa. For all $x,y\in D$, the following statements hold :
		\begin{enumerate}
			\item  $x\sqcap y\in D_{\sqcap} $ and $x\sqcup y\in D_{\sqcup}. $
			
			\item  $\neg x\in D_{\sqcap}$ and $\lrcorner x\in D_{\sqcup}.$
			
			\item $x\sqsubseteq y$ iff $\neg y\sqsubseteq \neg x$ and $\lrcorner y\sqsubseteq \lrcorner x.$ 
			
			\item $\neg\neg x=x\sqcap x$ and $\lrcorner\lrcorner x=x\sqcup x.$ 
			
			\item $x\vee y\in D_{\sqcap} $ and $x\wedge y\in D_{\sqcup}. $
			
			\item $\neg(x\vee y)=\neg x\sqcap \neg y$ and $\neg(x\sqcap y)=\neg x\vee \neg y.$
			
			\item $\lrcorner(x\wedge y)=\lrcorner x\sqcup \lrcorner y$ and $\lrcorner(x\sqcup y)=\lrcorner x\wedge \lrcorner y.$
		\end{enumerate}

	\end{proposition}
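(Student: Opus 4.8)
The plan is to establish the seven items in the order (1), (4), (2), (5), (6), (7), (3): the later identities lean on the earlier ones, everything except (3) reduces to direct manipulation of the defining identities, and (3) is obtained by transporting the statement into the Boolean algebras $\underline{D}_{\sqcap}$ and $\underline{D}_{\sqcup}$ supplied by Proposition~\ref{p8}.

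For (1), I would first record the auxiliary equality $(x\sqcap x)\sqcap(y\sqcap y)=x\sqcap y$, which follows from (1a) and (2a): $(x\sqcap x)\sqcap(y\sqcap y)\overset{(1a)}{=}x\sqcap(y\sqcap y)\overset{(2a)}{=}(y\sqcap y)\sqcap x\overset{(1a)}{=}y\sqcap x\overset{(2a)}{=}x\sqcap y$. Then, regrouping with (3a) and (2a), $(x\sqcap y)\sqcap(x\sqcap y)=(x\sqcap x)\sqcap(y\sqcap y)=x\sqcap y$, so $x\sqcap y\in D_{\sqcap}$; the claim $x\sqcup y\in D_{\sqcup}$ is the exact dual via (1b), (2b), (3b).

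Next, (4): from (8a) we have $\neg x=\neg(x\sqcap x)$, hence $\neg\neg x=\neg\neg(x\sqcap x)\overset{(7a)}{=}x\sqcap x$ (applying (7a) with the same variable twice), and (7b), (8b) give the dual. Item (2) is then immediate: applying (4) to $\neg x$ and then (4), (8a) to simplify, $\neg x\sqcap\neg x=\neg\neg\neg x=\neg(\neg\neg x)=\neg(x\sqcap x)=\neg x$, so $\neg x\in D_{\sqcap}$, and dually $\lrcorner x\in D_{\sqcup}$. For (5), $x\vee y=\neg(\neg x\sqcap\neg y)$ is a value of $\neg$, hence lies in $D_{\sqcap}$ by (2); dually for $x\wedge y$. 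For (6), unfolding the definition of $\vee$ and using (7a) gives $\neg(x\vee y)=\neg\neg(\neg x\sqcap\neg y)=\neg x\sqcap\neg y$, while (4) together with the auxiliary equality from (1) gives $\neg x\vee\neg y=\neg(\neg\neg x\sqcap\neg\neg y)=\neg\big((x\sqcap x)\sqcap(y\sqcap y)\big)=\neg(x\sqcap y)$; (7) is the same computation dualized to $\wedge,\lrcorner,\sqcup$.

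The main point is (3). By Proposition~\ref{p8}(3), $x\sqsubseteq y$ is equivalent to the conjunction of $x\sqcap x\sqsubseteq y\sqcap y$ (inside the Boolean algebra $\underline{D}_{\sqcap}$) and $x\sqcup x\sqsubseteq y\sqcup y$ (inside $\underline{D}_{\sqcup}$). In $\underline{D}_{\sqcap}$ the operation $\neg$ is the Boolean complement and $\sqsubseteq_{\sqcap}$ is a Boolean order, so order-reversal of complementation and (8a) give $x\sqcap x\sqsubseteq y\sqcap y\iff\neg(y\sqcap y)\sqsubseteq\neg(x\sqcap x)\iff\neg y\sqsubseteq\neg x$; since $\neg x,\neg y\in D_{\sqcap}$ by (2) and $\sqsubseteq_{\sqcap}$ is the restriction of $\sqsubseteq$, this last relation is the same read in $\underline{D}_{\sqcap}$ or in $\underline{D}$. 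Dually, using Proposition~\ref{p8}(2) and (8b), $x\sqcup x\sqsubseteq y\sqcup y\iff\lrcorner y\sqsubseteq\lrcorner x$. Conjoining the two equivalences yields $x\sqsubseteq y\iff(\neg y\sqsubseteq\neg x\text{ and }\lrcorner y\sqsubseteq\lrcorner x)$. The only delicate point here is the bookkeeping among the three (quasi-)orders — that $\sqsubseteq$ restricted to $D_{\sqcap}$ and to $D_{\sqcup}$ really coincides with the respective Boolean orders — and this is exactly what Proposition~\ref{p8}(1)–(2) guarantees, so no genuine obstacle remains.
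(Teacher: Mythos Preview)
The paper does not supply its own proof of Proposition~\ref{p4}: the result is stated in the preliminaries as a known computational tool and the text moves on immediately to congruences. So there is nothing to compare against, and the question is only whether your argument stands on its own.

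It does. Items (1), (4), (2), (5), (6), (7) are handled by clean axiom-chasing; the auxiliary identity $(x\sqcap x)\sqcap(y\sqcap y)=x\sqcap y$ is correct and is exactly what makes (1) and the second half of (6) go through. For (3) you invoke Proposition~\ref{p8}, which is legitimate in the paper's logical order since \ref{p8} is quoted (with citation to Vormbrock) \emph{before} \ref{p4}; the reduction to the Boolean complement in $\underline{D}_{\sqcap}$ and $\underline{D}_{\sqcup}$ is sound because $\neg x,\neg y\in D_{\sqcap}$ and $\lrcorner x,\lrcorner y\in D_{\sqcup}$ by your item (2), and Proposition~\ref{p8}(1)--(2) identifies $\sqsubseteq$ restricted to these sets with the Boolean orders. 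One could alternatively prove (3) directly from the definition of $\sqsubseteq$ and the axioms, without passing through Proposition~\ref{p8}, but your route is shorter and perfectly acceptable given the paper's setup.
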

	
	Let $\underline{D}=(D;\sqcap,\sqcup,\neg,\lrcorner,\bot,\top)$ be a dBa. Recall that a congruence relation on $\underline{D}$ is an equivalence relation on $D$ that is compatible with $\sqcap$, $\sqcup$, $\neg$ and $\lrcorner$. i.e. $(a,b), (c,d)\in\theta \implies (a\sqcap c, b\sqcap d), (a\sqcup c, b\sqcup d), (\neg a, \neg b), (\lrcorner a, \lrcorner b)\in\theta$.

	\begin{notation}
		The set of all congruence relations on $\underline{D}$ is denoted by $Con(\underline{D})$. For any $\theta\in Con(\underline{D})$ and $a\in D$,  $[a]_{\theta}=\{\,x\in D\mid (a,x)\in\theta\, \}$ is called the $\theta$ class of $a$.
	\end{notation}
	
	\begin{example}
		The equivalence relation defined on $\underline{D}_{6}$ by its classes  $\{\bot, b \},\{a,c \}$ and $\{d,\top\}$ is a congruence relation.
	\end{example}
	
	Let  $\underline{D}=(D;\sqcap,\sqcup,\neg,\lrcorner,\bot,\top)$ be a dBa, we define two binary operations $\cdot$ and $+$ on $D$ by:
	$$x\cdot y:=(x\sqcup\lrcorner y)\wedge(\lrcorner x\sqcup y)\quad \text{ and }\quad x+y:=(x\sqcap\neg y)\vee(\neg x\sqcap y).$$
	These two operations are important when characterizing congruence relations on dBas.
	
	%
	Let $\underline{D}$ be a dba and $A$ a subset of $D$, we set $\neg A=\{\neg a: a\in A\}$ and $\lrcorner A=\{\lrcorner a: a\in A\}$.
	The following proposition allows in pure  dBa $\underline{D}$, to build a congruence relation from a pair $(I, F)$ where $I$ is an ideal of $\underline{D}_{\sqcap}$ and  $F$ is a filter of $\underline{D}_{\sqcup}$ such that $\neg F\subseteq I$ and $\lrcorner I\subseteq F$.
	\begin{proposition}\label{t8}
		Let $\underline{D}=(D;\sqcap,\sqcup,\neg,\lrcorner,\bot,\top)$ be a pure dBa. Let $I$ be an ideal of $\underline{D}_{\sqcap}$ and  $F$ be a filter of $\underline{D}_{\sqcup}$, such that $\neg F\subseteq I$ and $\lrcorner I\subseteq F$. Then the binary relation defined by : $$(a,b)\in\theta: \iff  a+ b\in I~\text{and}~ a\cdot b\in F$$ is a congruence relation on $\underline{D}.$
	\end{proposition}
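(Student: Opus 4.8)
The plan is to pull everything back to the two Boolean algebras $\underline{D}_{\sqcap}$ and $\underline{D}_{\sqcup}$. The first step is to rewrite the operations $+$ and $\cdot$. Using (1a) and (8a), $a\sqcap\neg b=(a\sqcap a)\sqcap\neg(b\sqcap b)$ and $\neg a\sqcap b=\neg(a\sqcap a)\sqcap(b\sqcap b)$, so $a+b$ is exactly the symmetric difference, computed in the Boolean algebra $\underline{D}_{\sqcap}$, of $a\sqcap a$ and $b\sqcap b$; dually, by (1b) and (8b), $a\cdot b$ is the biconditional, computed in $\underline{D}_{\sqcup}$, of $a\sqcup a$ and $b\sqcup b$. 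Let $\theta_{I}\in Con(\underline{D}_{\sqcap})$ and $\theta_{F}\in Con(\underline{D}_{\sqcup})$ be the congruences corresponding, under Proposition~\ref{p10}, to the ideal $I$ and the filter $F$, so that $(u,v)\in\theta_{I}$ iff $u\oplus v\in I$ (with $\oplus$ the symmetric difference of $\underline{D}_{\sqcap}$) and $(p,q)\in\theta_{F}$ iff the biconditional of $p$ and $q$ in $\underline{D}_{\sqcup}$ lies in $F$. The rewriting then gives the description
\[ (a,b)\in\theta \iff (a\sqcap a,b\sqcap b)\in\theta_{I}\ \text{ and }\ (a\sqcup a,b\sqcup b)\in\theta_{F}, \]
which I will refer to as $(\star)$.

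From $(\star)$, $\theta$ is at once reflexive, symmetric and transitive, since $\theta_{I}$ and $\theta_{F}$ are equivalence relations and $x\mapsto x\sqcap x$, $x\mapsto x\sqcup x$ map $D$ into $D_{\sqcap}$, $D_{\sqcup}$. (Reflexivity also follows at once from $a+a=\bot\vee\bot=\bot\in I$ and $a\cdot a=\top\wedge\top=\top\in F$.)

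For the compatibility with $\sqcap,\sqcup,\neg,\lrcorner$ I combine $(\star)$ with three facts: (i) $\theta_{I}$ and $\theta_{F}$ are congruences on $\underline{D}_{\sqcap}$ and $\underline{D}_{\sqcup}$; (ii) the projection identities $(a\sqcap c)\sqcap(a\sqcap c)=(a\sqcap a)\sqcap(c\sqcap c)$ and $\neg a\sqcap\neg a=\neg(a\sqcap a)$, together with their $\sqcup,\lrcorner$-duals, which are immediate from (1a), (1b), (8a), (8b) and parts (1)--(2) of Proposition~\ref{p4}, and which say that $x\mapsto x\sqcap x$ sends $\sqcap$ and $\neg$ on $D$ to the meet and complement of $\underline{D}_{\sqcap}$ (dually for $x\mapsto x\sqcup x$); and (iii) the \emph{Bridge Lemma}: for $u,v\in D_{\sqcap}$, $(u,v)\in\theta_{I}$ implies $u\cdot v\in F$; dually, for $p,q\in D_{\sqcup}$, $(p,q)\in\theta_{F}$ implies $p+q\in I$. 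Granted these, each compatibility is short. For $\sqcap$: if $(a,b),(c,d)\in\theta$, then by (i) and (ii) one gets $(a\sqcap c,b\sqcap d)\in\theta_{I}$, which is the first conjunct in $(\star)$ for the pair $(a\sqcap c,b\sqcap d)$; the second conjunct is $(a\sqcap c)\cdot(b\sqcap d)\in F$, and this follows from the Bridge Lemma because $a\sqcap c,b\sqcap d\in D_{\sqcap}$ are $\theta_{I}$-related. Hence $(a\sqcap c,b\sqcap d)\in\theta$. Compatibility with $\neg$ runs identically, with $\neg a,\neg b\in D_{\sqcap}$ in place of $a\sqcap c,b\sqcap d$, and the cases of $\sqcup$ and $\lrcorner$ are the order-duals.

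The Bridge Lemma is where the hypotheses $\neg F\subseteq I$ and $\lrcorner I\subseteq F$ enter, and I expect its proof to be the main obstacle. For the first half: $(u,v)\in\theta_{I}$ means $u\oplus v=(u\sqcap\neg v)\vee(\neg u\sqcap v)\in I$; as $I$ is an ideal and $u\sqcap\neg v$ and $\neg u\sqcap v$ lie $\sqsubseteq_{\sqcap}$-below $u\oplus v$, both are in $I$, so $\lrcorner(u\sqcap\neg v),\lrcorner(\neg u\sqcap v)\in F$ by $\lrcorner I\subseteq F$. Now I use the identities $\lrcorner(u\sqcap\neg v)=\lrcorner u\sqcup v$ and $\lrcorner(\neg u\sqcap v)=u\sqcup\lrcorner v$, valid for $u,v\in D_{\sqcap}$ (they reduce, via the double-negation laws of Proposition~\ref{p4}, to the mixed de Morgan identity $\lrcorner(u\sqcap w)=\lrcorner u\sqcup\neg w$ on $D_{\sqcap}$): they yield $\lrcorner u\sqcup v,\ u\sqcup\lrcorner v\in F$, whence $u\cdot v=(u\sqcup\lrcorner v)\wedge(\lrcorner u\sqcup v)\in F$ because $F$ is a filter. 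The second half follows dually from $\neg F\subseteq I$ and the order-dual identities $\neg(p\sqcup\lrcorner q)=\neg p\sqcap q$ and $\neg(\lrcorner p\sqcup q)=p\sqcap\neg q$ on $D_{\sqcup}$. Establishing these last $\lrcorner$-on-$D_{\sqcap}$ (resp. $\neg$-on-$D_{\sqcup}$) identities from the axioms is the one genuinely computational point; everything else is bookkeeping with $(\star)$.
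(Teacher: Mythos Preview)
The paper does not actually prove this proposition: it is stated without proof in the preliminaries (Section~2.2), recalled from the literature (the neighbouring Corollary~\ref{c6} and Proposition~\ref{t9} are cited from~\cite{11}, and this result is of the same provenance). So there is no ``paper's own proof'' to compare against; I evaluate your argument on its merits.

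Your reformulation $(\star)$ is correct and efficient: via Proposition~\ref{p1}(8) one has $a+b=(a\sqcap a)\oplus(b\sqcap b)$ in $\underline{D}_{\sqcap}$ and dually for $a\cdot b$, so $\theta$ is exactly the pullback of $\theta_I\times\theta_F$ along $x\mapsto(x\sqcap x,\,x\sqcup x)$. Reflexivity, symmetry, transitivity, and the ``easy half'' of each compatibility then fall out. Reducing the remaining half to your Bridge Lemma is also correct.

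The genuine gap is the proof of the Bridge Lemma. The identities you invoke, $\lrcorner(u\sqcap\neg v)=\lrcorner u\sqcup v$ and $\lrcorner(\neg u\sqcap v)=u\sqcup\lrcorner v$ for $u,v\in D_{\sqcap}$ (equivalently the ``mixed de~Morgan'' $\lrcorner(u\sqcap w)=\lrcorner u\sqcup\neg w$ on $D_{\sqcap}$), are \emph{false} in general pure dBas. A concrete counterexample is the semiconcept algebra of the formal context $(G,M,I)=(\{1,2\},\{m\},\emptyset)$. Here $D_{\sqcap}=\{(\emptyset,\{m\}),(\{1\},\emptyset),(\{2\},\emptyset),(\{1,2\},\emptyset)\}$ and $D_{\sqcup}=\{(\emptyset,\{m\}),(\{1,2\},\emptyset)\}=\{\bot,\top\}$. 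Taking $u=(\{1\},\emptyset)$ and $v=(\{2\},\emptyset)$ one gets $\neg v=(\{1\},\emptyset)$, hence $u\sqcap\neg v=(\{1\},\emptyset)$ and $\lrcorner(u\sqcap\neg v)=(\emptyset,\{m\})=\bot$, whereas $\lrcorner u\sqcup v=(\emptyset,\{m\})\sqcup(\{2\},\emptyset)=(\{1,2\},\emptyset)=\top$. So the claimed equality reads $\bot=\top$.

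You were right to flag this step as ``the one genuinely computational point,'' but it is not merely computational: the identity you need is not a theorem of double Boolean algebras. The Bridge Lemma itself is true (it must be, since the proposition holds), and in the counterexample it holds trivially because $u\cdot v=\top$ for all $u,v\in D_{\sqcap}$; but it requires a different argument. One viable route is to prove the \emph{inequality} $\lrcorner(u+v)\sqsubseteq u\cdot v$ in $D_{\sqcup}$ (note: only $\sqsubseteq$, not equality), and then use $\lrcorner I\subseteq F$ together with upward closure of $F$. Establishing that inequality from the axioms is where the real work lies; the equalities you wrote down cannot be used.
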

	
	\begin{definition}
		In a dBa $\underline{D}$, a pair $(I,F)$ where $I$ is an ideal of $\underline{D}_{\sqcap}$, $F$ is a filter of $\underline{D}_{\sqcup}$ such that $\neg F\subseteq I$, $\lrcorner I\subseteq F$ is called \emph{a congruence generating pair}.
	\end{definition}
	
	\begin{notation} Let $\underline{D}$ be a dBa.
		For every congruence generating pair $(I,F)$, we denote by $\theta_{I,F}$ the congruence relation on $\underline{D}$ generated by $(I,F)$ and by $\mathfrak{C}(\underline{D})$ the set of all congruence generating pairs of $\underline{D}$.
	\end{notation}
	
	On the set $\mathfrak{C}(\underline{D})$ of all congruence generating pairs of a given dBa $\underline{D}$, the following  order relation is defined: $$(I,F)\le(G,H)\iff I\subseteq G~\text{and}~F\subseteq H.$$

	\begin{corollary}\cite[Corollary 2, p. 12]{11}\label{c6}
		If $\underline{D}$ is a pure double Boolean algebra, then the  map $\phi: Con(\underline{D})\longrightarrow \mathfrak{C}(\underline{D}),  \theta \longmapsto([\bot]_{\theta}\cap D_{\sqcap},~[\top]_{\theta}\cap D_{\sqcup})$
		is an isomorphism (with the inverse given by  Proposition \ref{t8}) between the lattice $Con(\underline{D})$ and the ordered set of all congruence generating pairs of $\underline{D}.$ 
	\end{corollary}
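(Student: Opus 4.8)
The plan is to show that $\phi$ and the map $\psi\colon\mathfrak C(\underline D)\to Con(\underline D)$, $(I,F)\mapsto\theta_{I,F}$ provided by Proposition~\ref{t8}, are order preserving and mutually inverse; since $Con(\underline D)$ is a lattice, an order isomorphism onto $\mathfrak C(\underline D)$ then automatically carries the lattice structure over, which is all that is claimed. Concretely I would verify: (i) $\phi(\theta)\in\mathfrak C(\underline D)$ for every $\theta\in Con(\underline D)$; (ii) $\phi$ and $\psi$ are monotone (that $\psi$ lands in $Con(\underline D)$ is precisely Proposition~\ref{t8}); (iii) $\phi\circ\psi=\mathrm{id}$; and (iv) $\psi\circ\phi=\mathrm{id}$.

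For (i): by Proposition~\ref{p4}, $D_\sqcap$ is closed under $\sqcap,\vee,\neg$ and contains $\bot$ and $\neg\bot$, so $\theta\cap D_\sqcap^{2}$ is a congruence of the Boolean algebra $\underline D_\sqcap$ and its $\bot$-class $[\bot]_\theta\cap D_\sqcap$ is an ideal of $\underline D_\sqcap$ (directly, or through Proposition~\ref{p10}); dually $[\top]_\theta\cap D_\sqcup$ is a filter of $\underline D_\sqcup$. The conditions $\neg F\subseteq I$ and $\lrcorner I\subseteq F$ come from $\neg\top=\bot$ and $\lrcorner\bot=\top$: $(y,\top)\in\theta$ forces $(\neg y,\bot)\in\theta$ with $\neg y\in D_\sqcap$, and dually. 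Monotonicity (ii) is immediate: $\theta_1\subseteq\theta_2$ gives $\phi(\theta_1)\le\phi(\theta_2)$, and $I\subseteq G$, $F\subseteq H$ give $\theta_{I,F}\subseteq\theta_{G,H}$ from $(a,b)\in\theta_{I,F}\iff a+b\in I$ and $a\cdot b\in F$.

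Part (iii) is a short computation inside the two internal Boolean algebras. For $x\in D_\sqcap$, using that $\neg\bot=\top\sqcap\top$ is the top of $\underline D_\sqcap$, that $\lrcorner\bot=\top$, and that $\bot\sqcup\bot=\lrcorner\top$ is the bottom of $\underline D_\sqcup$, one simplifies $x+\bot$ to $x$ and $x\cdot\bot$ to $\lrcorner x$. Hence $x\in[\bot]_{\theta_{I,F}}$ iff $x\in I$ and $\lrcorner x\in F$, and the second clause is automatic once $x\in I$ since $\lrcorner I\subseteq F$; thus $[\bot]_{\theta_{I,F}}\cap D_\sqcap=I$, and dually $[\top]_{\theta_{I,F}}\cap D_\sqcup=F$, i.e.\ $\phi(\theta_{I,F})=(I,F)$.

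Part (iv) is the heart of the argument; set $(I,F)=\phi(\theta)$. The inclusion $\theta\subseteq\theta_{I,F}$ is easy: if $(a,b)\in\theta$, applying the term operations $+$ and $\cdot$ and using $b+b=\bot$, $b\cdot b=\top$ gives $(a+b,\bot),(a\cdot b,\top)\in\theta$, which (as $a+b\in D_\sqcap$ and $a\cdot b\in D_\sqcup$) says exactly $a+b\in I$ and $a\cdot b\in F$. For the converse, assume $a+b\in I=[\bot]_\theta\cap D_\sqcap$ and $a\cdot b\in F=[\top]_\theta\cap D_\sqcup$. Using (1a), (1b), (8a), (8b) one identifies $a+b$ with the symmetric difference of $a\sqcap a$ and $b\sqcap b$ in $\underline D_\sqcap$, and $a\cdot b$ with the biconditional of $a\sqcup a$ and $b\sqcup b$ in $\underline D_\sqcup$; since for a congruence of a Boolean algebra a symmetric difference lies in the $0$-class, resp.\ a biconditional lies in the $1$-class, precisely when its two arguments are congruent, this gives $(a\sqcap a,b\sqcap b)\in\theta$ and $(a\sqcup a,b\sqcup b)\in\theta$. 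The final step, deducing $(a,b)\in\theta$ from these, is exactly where purity and axiom (12) enter, and is the main obstacle. If $a,b\in D_\sqcap$ then $a=a\sqcap a$, $b=b\sqcap b$ and we are done, and symmetrically if $a,b\in D_\sqcup$; in the mixed case $a\in D_\sqcap$, $b\in D_\sqcup$ we have $a=a\sqcap a$ and $b=b\sqcup b$, so $(a,b\sqcap b)\in\theta$ and $(a\sqcup a,b)\in\theta$, and applying the term $x\mapsto x\sqcap x$ to the latter while using (12) --- which yields $(a\sqcup a)\sqcap(a\sqcup a)=(a\sqcap a)\sqcup(a\sqcap a)=a\sqcup a$ --- gives $(a\sqcup a,b\sqcap b)\in\theta$; transitivity through $b\sqcap b$ then gives $(a,a\sqcup a)\in\theta$, and transitivity through $a\sqcup a$ gives $(a,b)\in\theta$. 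The other mixed case is symmetric, so $\theta_{I,F}=\theta$. The essential, least routine point is precisely this descent: purity ($D=D_\sqcap\cup D_\sqcup$) together with (12) forces a congruence of a pure dBa to be determined by, and reconstructible from, its traces on $\underline D_\sqcap$ and $\underline D_\sqcup$; the rest is bookkeeping in those two Boolean algebras.
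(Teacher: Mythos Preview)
The paper does not supply its own proof of this corollary: it is quoted verbatim from Vormbrock~\cite[Corollary~2, p.~12]{11} and used as a black box. So there is nothing in the present paper to compare your argument against.

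That said, your proof is correct and follows the natural route. Steps (i)--(iii) are routine and your justifications are accurate (in particular you correctly invoke Proposition~\ref{p1}(4),(5) for $x+\bot=x\sqcap x$ and $x\cdot\bot=\lrcorner x$, and the closure condition $\lrcorner I\subseteq F$ to make the second clause automatic). In step (iv) the delicate point is indeed the mixed case, and your use of axiom~(12) is exactly right: for $a\in D_\sqcap$ one has $(a\sqcup a)\sqcap(a\sqcup a)=(a\sqcap a)\sqcup(a\sqcap a)=a\sqcup a$, so applying $x\mapsto x\sqcap x$ to $(a\sqcup a,b)\in\theta$ yields $(a\sqcup a,b\sqcap b)\in\theta$, and the two transitivity steps you describe close the argument. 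This is essentially the same mechanism one would expect in Vormbrock's original proof; the only minor remark is that your phrase ``applying the term operations $+$ and $\cdot$'' deserves the one-line observation (which you implicitly use) that $+$ and $\cdot$ are indeed terms in the signature $(\sqcap,\sqcup,\neg,\lrcorner,\bot,\top)$, since $\vee$ and $\wedge$ are themselves derived operations.
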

	
	The following proposition allows for a dBa $\underline{D}$, to extend a congruence on $\underline{D}_{p}$ to a congruence on $\underline{D}$.
	\begin{proposition}\cite[Theorem 7, p. 16]{11}\label{t9}
		If $\underline{D}$ is a double Boolean algebra and $\theta$ a congruence relation on $\underline{D}_{p}$ then $\theta':=\theta\cup \Delta_{D}$ is a congruence relation on $\underline{D}$.
	\end{proposition}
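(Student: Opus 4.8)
The plan is to show directly that $\theta' := \theta \cup \Delta_D$ is an equivalence relation and then verify compatibility with each of the four operations $\sqcap$, $\sqcup$, $\neg$, $\lrcorner$. That $\theta'$ is reflexive is immediate from $\Delta_D \subseteq \theta'$; symmetry follows because $\theta$ and $\Delta_D$ are both symmetric. For transitivity, suppose $(a,b)\in\theta'$ and $(b,c)\in\theta'$. If either pair lies in $\Delta_D$ we are done by substitution, so we may assume $(a,b),(b,c)\in\theta$; since $a,b,c\in D_p$ and $\theta$ is a congruence on $\underline{D}_p$, transitivity of $\theta$ gives $(a,c)\in\theta\subseteq\theta'$.

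The core of the argument is compatibility. Here I would exploit Proposition \ref{p8}(4): $\underline{D}_p$ is a subalgebra of $\underline{D}$, so the operations $\sqcap,\sqcup,\neg,\lrcorner$ restricted to $D_p$ agree with the ambient operations and land back in $D_p$. Now take a pair of related pairs, say $(a,b)\in\theta'$ and $(c,d)\in\theta'$, and consider the unary cases first. If $(a,b)\in\Delta_D$ then $\neg a=\neg b$, so $(\neg a,\neg b)\in\Delta_D\subseteq\theta'$; similarly for $\lrcorner$. If $(a,b)\in\theta$, then $a,b\in D_p$, so $\neg a,\neg b\in D_p$ and $(\neg a,\neg b)\in\theta\subseteq\theta'$ because $\theta$ is a congruence on $\underline{D}_p$; likewise for $\lrcorner$. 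For the binary operations, the only genuinely new case is a mixed one, e.g. $(a,b)\in\theta$ (so $a,b\in D_p$) while $(c,d)\in\Delta_D$, i.e.\ $c=d$. Then $a\sqcap c$ and $b\sqcap c$ both lie in $D_\sqcap\subseteq D_p$ by Proposition \ref{p4}(1); moreover $(a,b),(c,c)\in\theta$ since $\theta$ is reflexive on $D_p$ and $c\in D_p$ — wait, $c$ need not be in $D_p$.

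This last observation is the one point requiring care, and it is the main obstacle: when $c=d\notin D_p$, we cannot appeal to $\theta$ being a congruence on $\underline{D}_p$ to conclude $(a\sqcap c, b\sqcap c)\in\theta$, because $c$ is outside the domain of $\theta$. The resolution is to use the identities (1a)--(6b) to rewrite $x\sqcap z$ and $x\sqcup z$ in terms of elements of $D_p$. Concretely, by Proposition \ref{p4}(1), for any $z\in D$ we have $z\sqcap z\in D_\sqcap$; and one checks from axiom (1a) that $a\sqcap c = (a\sqcap a)\sqcap(c\sqcap c)$, with both arguments now in $D_p$. Since $(a,b)\in\theta$ forces $(a\sqcap a, b\sqcap b)\in\theta$ (congruence on $\underline{D}_p$ applied to $(a,b)$ and $(a,b)$), and $(c\sqcap c, c\sqcap c)\in\theta$ by reflexivity of $\theta$ on $D_p$ (as $c\sqcap c\in D_p$), the congruence property of $\theta$ on $\underline{D}_p$ yields $(a\sqcap c, b\sqcap c)=( (a\sqcap a)\sqcap(c\sqcap c),\, (b\sqcap b)\sqcap(c\sqcap c))\in\theta\subseteq\theta'$. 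The analogous reduction $a\sqcup c=(a\sqcup a)\sqcup(c\sqcup c)$ handles $\sqcup$ via axiom (1b). The remaining case, $(a,b),(c,d)$ both in $\Delta_D$, is trivial, and the fully-in-$\theta$ case is just the congruence property of $\theta$ on $\underline{D}_p$ together with closure of $D_p$ under the operations. Assembling these cases completes the verification that $\theta'\in Con(\underline{D})$.
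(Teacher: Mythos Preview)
Your argument is correct. The paper does not supply its own proof of this proposition; it is quoted from Vormbrock's thesis \cite[Theorem~7]{11} and used as a black box, so there is nothing to compare against in the present paper. Your direct verification is sound: the key reduction $a\sqcap c=(a\sqcap a)\sqcap(c\sqcap c)$ (from axioms (1a) and (2a)) and its dual $a\sqcup c=(a\sqcup a)\sqcup(c\sqcup c)$ are exactly what is needed to pull an arbitrary $c\in D$ back into $D_p$ and then invoke the congruence property of $\theta$ on $\underline{D}_p$. One stylistic remark: the ``wait'' digression and the symmetric mixed case $(a,b)\in\Delta_D$, $(c,d)\in\theta$ should be cleaned up in a final write-up, but neither affects correctness.
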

	
	Now, we can present our results. 
	
	\section{Simple double Boolean algebras }
	First we study some properties of dBas,  then show how to construct trivial double Boolean algebras from Boolean algebras and thereafter, characterize simple dBas. 
	
	\subsection{Some results on double Boolean algebras}
	
	The first result here characterizes the quasi-order on pure dBas.
	
	\begin{proposition}\label{p11} Every pure double Boolean algebra is regular.
	\end{proposition}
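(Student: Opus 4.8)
The goal is to prove that the quasi-order $\sqsubseteq$ of a pure dBa $\underline{D}$ is antisymmetric; since $\sqsubseteq$ is already known to be a quasi-order, antisymmetry upgrades it to a partial order, which is exactly regularity in the sense of Definition \ref{d1}. So the plan is to start from $x\sqsubseteq y$ and $y\sqsubseteq x$ and deduce $x=y$.

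Unfolding the definition of $\sqsubseteq$ gives $x\sqcap y=x\sqcap x$, $x\sqcup y=y\sqcup y$, $y\sqcap x=y\sqcap y$ and $y\sqcup x=x\sqcup x$. Using commutativity (axioms $(2a)$ and $(2b)$), the first and third identities yield $x\sqcap x=y\sqcap y$, and the second and fourth yield $x\sqcup x=y\sqcup y$; that is, $x$ and $y$ share the same $\sqcap$-component and the same $\sqcup$-component.

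Next I would split into cases according to purity, which tells us that each of $x$ and $y$ lies in $D_{\sqcap}$ or in $D_{\sqcup}$. If $x,y\in D_{\sqcap}$ then $x=x\sqcap x=y\sqcap y=y$, and if $x,y\in D_{\sqcup}$ then $x=x\sqcup x=y\sqcup y=y$. The only remaining possibility, up to exchanging $x$ and $y$ (which is harmless since the hypothesis is symmetric in $x$ and $y$), is $x\in D_{\sqcap}$ and $y\in D_{\sqcup}$. For this case I would invoke axiom $(12)$ applied to $x$, namely $(x\sqcap x)\sqcup(x\sqcap x)=(x\sqcup x)\sqcap(x\sqcup x)$: since $x\sqcap x=x$, $x\sqcup x=y\sqcup y=y$ and $y\sqcap y=x\sqcap x=x$, the left-hand side reduces to $y$ while the right-hand side reduces to $x$, forcing $x=y$.

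All the computations involved are immediate substitutions into the axioms, so I do not expect any real difficulty; the one point that requires care is the mixed case $x\in D_{\sqcap}$, $y\in D_{\sqcup}$, where the two ``component'' identities alone do not close the argument and axiom $(12)$ must be used to bridge the $\sqcap$-side and the $\sqcup$-side. Once antisymmetry is established, regularity follows directly from the definition.
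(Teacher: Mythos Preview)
Your proof is correct and follows essentially the same approach as the paper: derive $x\sqcap x=y\sqcap y$ and $x\sqcup x=y\sqcup y$, dispose of the same-side cases directly, and in the mixed case use axiom $(12)$ to pass between the $\sqcap$- and $\sqcup$-components. The only cosmetic differences are that the paper obtains the two component identities via Proposition~\ref{p8}(3) rather than by unfolding the definition, and applies axiom $(12)$ to $y$ instead of $x$.
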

	\begin{proof}
		Assume that $\underline{D}$ is a pure dBa. Let $x,y\in D$ such that $x\sqsubseteq y$ and $y\sqsubseteq x$. We have:
		$$\big(x\sqsubseteq y ~\text{and}~ y\sqsubseteq x\big) \iff \left\{
		\begin{array}{r c l}
			x\sqcap x&=&y\sqcap y\\
			x\sqcup x&=&y\sqcup y
		\end{array}
		\right.~~(\text{by}~(3)~\text{of Proposition}~\ref{p8}).$$
		
		\begin{itemize}
			\item If $x,y\in D_{\sqcap}$ (resp. $x,y\in D_{\sqcup}$), then we are done, because $\sqsubseteq_{\sqcap}$ ( resp. $\sqsubseteq_{\sqcup}$) is an order relation on $D_{\sqcap}$ ( resp. $D_{\sqcup}$).
			\item If $x\in D_{\sqcap}$ and $y\in D_{\sqcup}$, then $$x=x\sqcap x = y\sqcap y~(\star) \quad \text{ and }\quad y= y\sqcup y = x\sqcup x~(\star\star).$$ 
			Therefore :
			$$\begin{array}{lll}
				y&=(y\sqcap y)\sqcup(y\sqcap y)& (\text{by}~ (\star)~\text{and}~(\star\star) ) \\
				&=(y\sqcup y)\sqcap(y\sqcup y)& (\text{by axiom}~(12))\\
				&=y\sqcap y& (\text{because}~y\in D_{\sqcup})\\
				&=x& (\text{by}~(\star))
			\end{array}$$
			\item If $x\in D_{\sqcup}$ and $y\in D_{\sqcap}$, then a similar calculation shows that $x=y$  and we are done.
		\end{itemize}
	\end{proof}
	
	\begin{remark}\label{r1}
		Let $\underline{D}$ be a dBa. The following statements hold:
		
		\begin{enumerate}
			\item  If $x\in D_{\sqcap}$, then $x\sqcup x\in D_{\sqcap}\cap D_{\sqcup}$.
			\item  If $x\in D_{\sqcup}$, then $x\sqcap x\in D_{\sqcap}\cap D_{\sqcup}$.
		\end{enumerate}
	\end{remark}
	The next result shows that in a trivial dBa $\underline{D}$, the operations $\sqcup$ and $\lrcorner$ (resp. $\sqcap$ and $\neg$) are constant on $D_{\sqcap}$ (resp. $D_{\sqcup}$). 
	\begin{proposition}\label{p9}
		Let $\underline{D}$ be a trivial dBa. The following statements hold:
		\begin{enumerate}
			\item  If $x,y\in D_{\sqcap}$, then $x\sqcup y=\bot\sqcup\bot$ and $\lrcorner x=\top$.
			\item  If $x,y\in D_{\sqcup}$, then $x\sqcap y=\top\sqcap\top$ and $\neg x=\bot$.
		\end{enumerate}
	\end{proposition}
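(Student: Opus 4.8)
The plan is to distil everything from the single coincidence imposed by triviality. Set $e:=\top\sqcap\top$. By axioms $(10a)$ and $(10b)$ together with triviality, $e=\neg\bot=\bot\sqcup\bot=\lrcorner\top$, so by Proposition~\ref{p4}(1) we have $e\in D_\sqcap\cap D_\sqcup$; moreover, by Proposition~\ref{p8}, $e=\neg\bot$ is the greatest element of the Boolean algebra $\underline{D}_\sqcap$ and $e=\lrcorner\top$ is the least element of $\underline{D}_\sqcup$. The whole argument then consists in squeezing the relevant elements between known bounds, using that $\neg$ and $\lrcorner$ reverse $\sqsubseteq$ (Proposition~\ref{p4}(3)) and that $\sqsubseteq_\sqcap,\sqsubseteq_\sqcup$ are genuine (antisymmetric) orders (Proposition~\ref{p8}).

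For item $(1)$, I would first prove $\lrcorner x=\top$ for $x\in D_\sqcap$. Since $e$ is the top of $\underline{D}_\sqcap$, we have $x\sqsubseteq e$, hence $\lrcorner e\sqsubseteq\lrcorner x$ by Proposition~\ref{p4}(3); and $\lrcorner e=\lrcorner\lrcorner\top=\top\sqcup\top=\top$ by Proposition~\ref{p4}(4) together with $\top\in D_\sqcup$. So $\top\sqsubseteq\lrcorner x$, and as $\lrcorner x\in D_\sqcup$ (Proposition~\ref{p4}(2)) while $\top$ is the top of $\underline{D}_\sqcup$, antisymmetry of $\sqsubseteq_\sqcup$ forces $\lrcorner x=\top$. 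Next, for $x,y\in D_\sqcap$, Proposition~\ref{p4}(7) yields $\lrcorner(x\sqcup y)=\lrcorner x\wedge\lrcorner y=\top\wedge\top=\top$ (the meet of the top with itself in $\underline{D}_\sqcup$); applying $\lrcorner$ again and invoking axioms $(7b)$ and $(10b)$ gives $x\sqcup y=\lrcorner\lrcorner(x\sqcup y)=\lrcorner\top=\bot\sqcup\bot$.

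Item $(2)$ is the order-dual. For $x\in D_\sqcup$ we have $e\sqsubseteq x$ (as $e$ is the bottom of $\underline{D}_\sqcup$), so $\neg x\sqsubseteq\neg e=\neg\neg\bot=\bot\sqcap\bot=\bot$ by Proposition~\ref{p4}(3) and~(4) and $\bot\in D_\sqcap$; since $\neg x\in D_\sqcap$ and $\bot$ is the bottom of $\underline{D}_\sqcap$, antisymmetry of $\sqsubseteq_\sqcap$ gives $\neg x=\bot$. Then for $x,y\in D_\sqcup$, Proposition~\ref{p4}(6) gives $\neg(x\sqcap y)=\neg x\vee\neg y=\bot\vee\bot=\bot$, and axioms $(7a)$ and $(10a)$ give $x\sqcap y=\neg\neg(x\sqcap y)=\neg\bot=\top\sqcap\top$.

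I do not expect a genuine obstacle; the only point that needs care is that $\sqsubseteq$ is merely a quasi-order on all of $D$, so every equality must be extracted by sandwiching an element between two bounds which, together with the element itself, lie inside one of $D_\sqcap$ or $D_\sqcup$, where Proposition~\ref{p8} supplies a real order — and this is exactly the shape of each step above. One could alternatively route item $(1)$ through Remark~\ref{r1} and axiom $(8b)$ (first deduce $x\sqcup x=e$, then $\lrcorner x=\lrcorner(x\sqcup x)=\lrcorner e$), but the path via Proposition~\ref{p4}(7) avoids even that detour.
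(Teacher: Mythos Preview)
Your proof is correct, but it proceeds along a genuinely different line from the paper's. The paper's argument is purely algebraic: from Remark~\ref{r1} one has $x\sqcup x\in D_\sqcap\cap D_\sqcup$ for $x\in D_\sqcap$, and triviality collapses this intersection to the singleton $\{\bot\sqcup\bot\}$; hence $x\sqcup x=\bot\sqcup\bot$, from which $x\sqcup y=(x\sqcup x)\sqcup(y\sqcup y)=\bot\sqcup\bot$ via axiom~$(1b)$, and $\lrcorner x=\lrcorner(x\sqcup x)=\lrcorner(\bot\sqcup\bot)=\lrcorner\bot=\top$ via axioms~$(8b)$ and~$(11b)$. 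You instead exploit the order structure: the element $e=\top\sqcap\top=\bot\sqcup\bot$ is simultaneously the top of $\underline{D}_\sqcap$ and the bottom of $\underline{D}_\sqcup$, so antitonicity of $\lrcorner$ together with antisymmetry in $\underline{D}_\sqcup$ forces $\lrcorner x=\top$, and then the De~Morgan law of Proposition~\ref{p4}(7) yields $x\sqcup y$. The paper's route is shorter and avoids the quasi-order entirely; your route is a bit more conceptual in that it makes explicit why triviality ``pinches'' the two Boolean algebras at $e$, and it does not rely on Remark~\ref{r1} (i.e.\ on axiom~$(12)$). Amusingly, the alternative you sketch in your final sentence is precisely the paper's proof.
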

	
	\begin{proof}
		
		The items $(1)$ and $(2)$ are dual. We give a proof of $(1)$.
		
		Since $\underline{D}$ is a trivial dBa,  $D_{\sqcap}\cap D_{\sqcup}=\{\top\sqcap\top\}=\{\bot\sqcup\bot \}~(\dagger)$.
		
		Let $x,y\in D_{\sqcap}$. We have :
		$$\begin{array}{ccl}
			x\sqcup y&=&(x\sqcup x)\sqcup(y\sqcup y)~~(\text{by axiom}~(1b))\\
			&=& (\bot\sqcup \bot)\sqcup (\bot\sqcup \bot)~~(\text{by Remark \ref{r1}}~\text{and}~(\dagger))\\
			&=& \bot\sqcup \bot~~(\text{by Proposition}~ \ref{p4})
		\end{array}$$
		and
		$$\begin{array}{ccl}
			\lrcorner x&=&\lrcorner(x\sqcup x)~~(\text{by axiom}~(8b))\\
			&=& \lrcorner(\bot\sqcup \bot)~~(\text{{by Remark \ref{r1}}}~\text{and}~(\dagger))\\
			&=& \lrcorner\bot~~(\text{by axiom}~(8b))\\
			&=& \top~~(\text{by axiom}~(11b)).
		\end{array}$$
		
	\end{proof}
	
	\begin{definition} \cite
		{5}
		Let $(P;\le_{P})$ and $(Q;\le_{Q})$ be two posets.
		
		\begin{enumerate}
			\item  The \emph{ordinal sum} $P+Q$ of $P$ and $Q$ is the poset $(P\cup Q; \le)$ where for elements $x,y\in P\cup Q$, $x\le y$ if one of the following conditions holds:
			
			\begin{enumerate}[label=\textup{(\alph*)}]
				\item  $x,y\in P$ and $x\le_{P} y$,
				\item  $x,y\in Q$ and $x\le_{Q} y$,
				\item  $x\in P$ and $y\in Q$.
			\end{enumerate}
			\item  If $(P;\le_{P})$ have a unit $1_{P}$ and $(Q;\le_{Q})$ have a zero $0_{Q}$ then the \emph{glued sum} $P\overset{\mathbf{\bullet}}{+} Q$ is obtained from $P+Q$ by identifying $1_{P}$ and $0_{Q}$; that is $1_{P}=0_{Q}$.
		\end{enumerate}
	\end{definition}
	
	Now, we can give a characterization of pure and trivial dBas. 
	
	\begin{theorem}\label{t13}
		
		\begin{enumerate}
			\item If $\underline{D}$ is a pure and trivial dBa, then $(D;\sqsubseteq)=\underline{D}_{\sqcap}\overset{\mathbf{\bullet}}{+} \underline{D}_{\sqcup}$.
			\item If $D=\underline{P}\overset{\mathbf{\bullet}}{+} \underline{Q}$ where $\underline{P}:=(P;\wedge_P, \vee_P,^{\prime_P}, 0_P, 1_P)$ and $\underline{Q}: =(Q;\wedge_Q, \vee_Q,^{\prime_Q}, 0_Q, 1_Q)$ are Boolean algebras, then $(D;\sqcap, \sqcup, \neg,\lrcorner,\bot,\top)$ is a pure and trivial dBa, where $\bot=0_{P}$, $\top=1_{Q}$,
			
			$x\sqcap y:=\left\{
			\begin{array}{cl}
				x\wedge_{P} y& \text{if}~x,y\in P,\\
				1_{P}=0_{Q}& \text{if}~x,y\in Q,\\
				x& \text{if}~x\in P~\text{and}~y\in Q
			\end{array}
			\right.$;
			
			$x\sqcup y:=\left\{
			\begin{array}{cl}
				x\vee_{Q} y& \text{if}~x,y\in Q,\\
				0_{Q}=1_{P}& \text{if}~x,y\in P,\\
				y& \text{if}~x\in P~\text{and}~y\in Q
			\end{array}
			\right.; $
			
			$\neg x:=\left\{
			\begin{array}{cl}
				x^{\prime_P}& \text{if}~x\in P,\\
				0_{P}& \text{otherwise}
			\end{array}
			\right.$;
			
			$\lrcorner x:=\left\{
			\begin{array}{cl}
				x^{\prime_Q}& \text{if}~x\in Q,\\
				1_{Q}& \text{otherwise}
			\end{array}
			\right.. $ 
		\end{enumerate}
		
	\end{theorem}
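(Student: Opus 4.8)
The plan is to prove the two implications of the theorem separately, in each case working through the sets $D_{\sqcap}$, $D_{\sqcup}$ and the quasi-order $\sqsubseteq$. For (1), I would first collect the structural consequences of the hypotheses: purity gives $D = D_{\sqcap}\cup D_{\sqcup}$ and, via Proposition~\ref{p11}, regularity, so $\sqsubseteq$ is a genuine partial order; triviality gives $D_{\sqcap}\cap D_{\sqcup} = \{\top\sqcap\top\} = \{\bot\sqcup\bot\}$, an element which by Proposition~\ref{p8}(1)--(2) and axiom~(10a) is at once the top $\neg\bot$ of the Boolean algebra $\underline{D}_{\sqcap}$ and the bottom $\lrcorner\top$ of $\underline{D}_{\sqcup}$; and by Proposition~\ref{p8}(1)--(2) the restrictions of $\sqsubseteq$ to $D_{\sqcap}$ and to $D_{\sqcup}$ are the Boolean orders $\sqsubseteq_{\sqcap}$ and $\sqsubseteq_{\sqcup}$. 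Thus $D$, with that single shared point, already has the shape of the glued sum, and the only thing left to determine is how $\sqsubseteq$ relates a $D_{\sqcap}$-element to a $D_{\sqcup}$-element.

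The heart of (1) is the claim that $x\sqsubseteq y$ whenever $x\in D_{\sqcap}$ and $y\in D_{\sqcup}$. I would establish this by the chain $x\sqcap y = x\sqcap(y\sqcap y) = x\sqcap(\top\sqcap\top) = x\sqcap x = x$, using respectively axioms~(1a),(2a), then Remark~\ref{r1}(2) with triviality (which forces $y\sqcap y = \top\sqcap\top$), then $x\sqsubseteq_{\sqcap}\top\sqcap\top$ together with $x\in D_{\sqcap}$; dually $x\sqcup y = y$ via axiom~(1b), Remark~\ref{r1}(1), and the fact that $\bot\sqcup\bot$ is the least element of $\underline{D}_{\sqcup}$. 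Hence $x\sqsubseteq y$. Feeding this back with the roles of $x$ and $y$ interchanged, and invoking antisymmetry of $\sqsubseteq$, shows that a $D_{\sqcup}$-element can be $\sqsubseteq$ a $D_{\sqcap}$-element only when both equal $\top\sqcap\top$. These two observations are precisely the description of the glued-sum order, so $(D;\sqsubseteq) = \underline{D}_{\sqcap}\overset{\mathbf{\bullet}}{+}\underline{D}_{\sqcup}$.

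For (2), I would begin by reading off from the case-definitions that $x\sqcap x = x$ for $x\in P$ while $x\sqcap x = 1_P$ for $x\in Q$, and dually for $\sqcup$; hence $D_{\sqcap}=P$, $D_{\sqcup}=Q$, $D_{\sqcap}\cap D_{\sqcup}=\{1_P\}=\{0_Q\}$, so $D = P\cup Q = D_{\sqcap}\cup D_{\sqcup}$ with each element idempotent for $\sqcap$ or $\sqcup$ (purity) and $\top\sqcap\top = 1_Q\sqcap 1_Q = 1_P = 0_Q = 0_P\sqcup 0_P = \bot\sqcup\bot$ (triviality), all of which is pure formula-chasing from the definitions. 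The substance is then to verify that $(D;\sqcap,\sqcup,\neg,\lrcorner,\bot,\top)$ satisfies (1a)--(12), which I would do by a case split on which of $P$, $Q$ contains each variable. After recording that $x\vee y = x\vee_P y$ for $x,y\in P$ and $x\wedge y = x\wedge_Q y$ for $x,y\in Q$ (immediate from De Morgan in $\underline{P}$, $\underline{Q}$ and the definitions of $\neg$, $\lrcorner$), every ``all in $P$'' instance of an axiom becomes a valid identity of $\underline{P}$, every ``all in $Q$'' instance one of $\underline{Q}$, and every mixed instance collapses, since $\sqcap$ crushes the $Q$-side to $1_P$, $\sqcup$ crushes the $P$-side to $0_Q$, $\neg$ crushes $Q$ to $0_P$, and $\lrcorner$ crushes $P$ to $1_Q$, so both sides of a mixed instance reduce to one of $\bot$, $\top$, $1_P=0_Q$; the nullary axioms (10a)--(11b) and axiom~(12) are one-line computations of the same flavour.

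The main obstacle is bookkeeping rather than conceptual: the glue point $z := 1_P = 0_Q$ lies in both $P$ and $Q$, so each appeal to a case-definition must come with the (trivial) check that the $P$-clause and the $Q$-clause agree at $z$ --- for instance $\neg z = z^{\prime_P} = 0_P$ agrees with the ``otherwise'' clause --- and the three-line definitions of $\sqcap$ and $\sqcup$ must be read as symmetric in their two arguments, the omitted ``$x\in Q$, $y\in P$'' clause being supplied by commutativity. Once these conventions are fixed, part~(2) is a finite, mechanical verification and part~(1) is short; I expect no genuine difficulty beyond the length of the case analysis in~(2).
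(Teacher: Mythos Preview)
Your proposal is correct and follows essentially the same route as the paper. For part~(1) you invoke Proposition~\ref{p11} for regularity and then compute $x\sqcap y=x$, $x\sqcup y=y$ for $x\in D_{\sqcap}$, $y\in D_{\sqcup}$ via Remark~\ref{r1} and triviality, exactly as the paper does (the paper packages your use of Remark~\ref{r1} into Proposition~\ref{p9}); for part~(2) both you and the paper verify the axioms by a case split on membership in $P$ or $Q$, reducing the all-$P$ cases to identities in $\underline{P}$ and collapsing the mixed cases. Your explicit attention to the consistency of the case definitions at the glue point $1_P=0_Q$ is a point the paper leaves implicit.
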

	
	\begin{proof}
		
		\begin{enumerate}
			\item Assume that $\underline{D}=(D;\sqcap,\sqcup,\neg,\lrcorner,\bot,\top)$ is a pure and trivial dBa.
			Set $(P;\le_{P})=(D_{\sqcap};\sqsubseteq_{\sqcap})$ and $(Q;\le_{Q})=(D_{\sqcup};\sqsubseteq_{\sqcup})$. Since $\underline{D}$ is pure and trivial, $D=P\cup Q$, $\sqsubseteq$ is an order relation and $1_{P}=\top\sqcap\top=\bot\sqcup\bot=0_{Q}$. Let $x,y\in D$.
			\begin{itemize}
				\item If $x,y\in P$, then $x\sqsubseteq y\iff x\sqsubseteq_{\sqcap} y$ (by $(1)$ of Proposition \ref{p8}). Thus, $x\le_{P} y$. 
				\item If $x,y\in Q$, then $x\sqsubseteq y\iff x\sqsubseteq_{\sqcup} y$ (by $(2)$ of Proposition \ref{p8}). Thus, $x\le_{Q} y$.
				\item If $x\in P$ and $y\in Q$, then 
				\begin{align*}
					x\sqcap y &= x\sqcap(y\sqcap y) ~~(\text{by axiom}~(1a)) \\
					&=	 x\sqcap(\top\sqcap\top)~~(\text{by Proposition}~\ref{p9})\\ 
					&=x \qquad \text{ and } \\
					x\sqcup y &= (x\sqcup x)\sqcup y~~(\text{by axiom}~(1b))\\
					&=(\bot\sqcup\bot)\sqcup y~~(\text{by Proposition}~\ref{p9})\\
					&=y
				\end{align*}
				Thus $x\sqsubseteq y$. 
			\end{itemize}	  
			Hence $D=P\overset{\bullet}{+}Q$.
			\item Assume that $D=P\overset{\bullet}{+}Q$ where $\underline{P}=(P; \wedge_{P},\vee_{P}, ^{\prime_P}, 0_{P}, 1_{P})$ and $\underline{Q}=(Q; \wedge_{Q},\vee_{Q}, ^{\prime_Q}, 0_{Q}, 1_{Q})$ are Boolean algebras.		
			We will show that the axioms $(1a)-(11a)$ and $(12)$ are satisfied. The axioms $(1b)-(11b)$ can be obtained dually.
			Let $x,y,z\in D$. Set $x\vee y:=\neg(\neg x\sqcap\neg y).$
			\begin{enumerate}[label=\textup{(\alph*)}]
				\item~ 
				\begin{enumerate} [label=\textup{(\roman*)}]
					\item If $x,y\in P,$ then $(x\sqcap x)\sqcap y=x\sqcap y.$
					
					\item  If $x,y\in Q,$ then $(x\sqcap x)\sqcap y=1_{P}\sqcap y=1_{P}=x\sqcap y.$
					
					\item  If $x\in P$ and $y\in Q$, then $(x\sqcap x)\sqcap y=x\sqcap y.$
					
					\item  If $x\in Q$ and $y\in P$, then $(x\sqcap x)\sqcap y=1_{P}\sqcap y=y=x\sqcap y.$
					
				\end{enumerate}
				In any case we have $(x\sqcap x)\sqcap y=x\sqcap y$, thus the axiom $(1a)$ holds.
				\item  Since a similar check as in (a) can give also the equality $x\sqcap(y\sqcap y)=y$ for $x,y\in D$, we have : $$\begin{array}{lcl}
					x\sqcap y&=&(x\sqcap x)\sqcap(y\sqcap y)~~(\text{by}~(a))\\
					&=&(x\sqcap x)\wedge_{P}(y\sqcap y) \left( \text{ due to }\left( x\sqcap x\right)\in P\right) \\
					&=&(y\sqcap y)\wedge_{P}(x\sqcap x)~~(\text{because}~\wedge_{P}~\text{is commutative in }~P).\\
					&=&(y\sqcap y)\sqcap(x\sqcap x)\\
					&=& y\sqcap x~~~(\text{by}~(a)).
				\end{array}$$
				
				Thus the axiom $(2a)$ holds.
				
				\item The axiom $(3a)$ follows from $(a)$ and the  associativity of $\wedge_{P}$. 
				\item~
				\begin{enumerate}[label=\textup{(\roman*)}]
					\item  If $x,y\in P,$ then $x\sqcap(x\sqcup y)=x\sqcap 1_{P}=x=x\sqcap x.$
					\item  If $x,y\in Q,$ then $x\sqcap(x\sqcup y)=1_{P}=x\sqcap x.$
					\item If $x\in P$ and $y\in Q$, then $x\sqcap(x\sqcup y)=x\sqcap y=x=x\sqcap x.$
					\item If $x\in Q$ and $y\in P$, then $x\sqcap(x\sqcup y)=x\sqcap x.$
				\end{enumerate}
				
				In any case we have $x\sqcap(x\sqcup y)=x\sqcap x$; so the axiom $(4a)$ holds.
				\item~  
				\begin{enumerate}[label=\textup{(\roman*)}]
					\item [(i)]If $x\in P$, then $\neg(x\sqcap x)=\neg x.$ 
					\item [(ii)]If $x\in Q$, then $\neg (x\sqcap x)= 1_{P}^{\prime_P}=0_{P}=\neg x.$ In any case we have $\neg(x\sqcap x)=\neg x$; so the axiom $(8a)$ holds.
				\end{enumerate}
				\item  We have $$\begin{array}{lcl}
					x\vee y&=&\neg(\neg x\sqcap\neg y)\qquad \text{ by definition} \\
					&=&\neg(\neg(x\sqcap x)\sqcap\neg(y\sqcap y)) ~~(\text{by}~(e))\\
					&=& (x\sqcap x)\vee (y\sqcap y) \qquad \text{ by definition}.
				\end{array}$$
				
				\item  Using $(a)$, $(f)$ and the absorption law in $\underline{P}$, we have $x\sqcap(x\vee y)=x\sqcap x$; so the axiom $(5a)$ holds.
				\item  Using $(a)$, $(f)$ and the distributive law in $\underline{P}$, we have $x\sqcap(y\vee z)=(x\sqcap y)\vee(x\sqcap z)$; so the axiom $(6a)$ holds.
				\item   $\neg\neg(x\sqcap y) = (x\sqcap y)^{\prime_P\prime_P} = x\sqcap y$~(because $\underline{P}$ is a Boolean algebra); so the axiom $(7a)$ holds.
				\item  $$\begin{array}{lcl}
					x\sqcap \neg x&=& (x\sqcap x)\sqcap\neg(x\sqcap x)~~(\text{by}~(a)~\text{and}~(e))\\
					&=& (x\sqcap x)\wedge_{P} (x\sqcap x)^{\prime_P}\\
					&=& 0_{P}~~(\text{because}~\underline{P}~\text{is a Boolean algebra})\\
					&=&\bot.
				\end{array}$$
				
				So the axiom $(9a)$ holds.
				\item   $\neg\bot=\neg 0_{P} = 0_{P}^{\prime_P} = 1_{P} = 1_{Q} \sqcap 1_{Q}=\top\sqcap\top$. So the axiom $(10a)$ holds.
				\item  $\neg \top=\neg 1_{Q}:=\bot$. So the axiom $(11a)$ holds.
				\item  $(x\sqcap x)\sqcup(x\sqcap x)=1_{P}$ (because $x\sqcap x\in P$) and  $(x\sqcup x)\sqcap(x\sqcup x)=1_{P}$ (because $x\sqcup x\in Q$) so $(x\sqcap x)\sqcup(x\sqcap x)=(x\sqcup x)\sqcap(x\sqcup x)$; hence, the axiom $(12)$ holds.
			\end{enumerate}
			
			Thus $\underline{D}=(D;\sqcup,\sqcap,\neg,\lrcorner,\bot,\top)$ is a dBa. Moreover, $D= P\cup Q=D_{\sqcap}\cup D_{\sqcup}$ and 
			$\bot\sqcup\bot=0_{P}\sqcup 0_{P}=0_{Q}=1_{P}~(\text{because}~D=P\overset{\bullet}{+}Q)=1_{Q}\sqcap 1_{Q}=\top\sqcap\top.$
			So $\underline{D}=(D;\sqcup,\sqcap,\neg,\lrcorner,\bot,\top)$ is pure and trivial.
			We conclude that $\underline{D}=(D;\sqcap,\sqcup,\neg,\lrcorner,\bot,\top)$ is a pure and trivial dBa.
		\end{enumerate}
	\end{proof}
	
	
	The next corollary is a direct application of Theorem \ref{t13}.
	\begin{corollary}
		Let $A$ and $B$ be two disjoint sets. Then there exists a structure of pure and trivial  dBa on the set $P(A) \cup P(B)$. 
	\end{corollary}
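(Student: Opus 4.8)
The plan is to reduce immediately to part~(2) of Theorem~\ref{t13}, which produces a pure and trivial dBa structure on the underlying set of \emph{any} glued sum of two Boolean algebras. So it suffices to realize $P(A)\cup P(B)$ as the underlying set of such a glued sum.

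The obvious candidates are the power-set Boolean algebras $\underline{P(A)}$ and $\underline{P(B)}$, each taken with intersection, union and set-theoretic complement. The single point that needs care is the gluing condition: forming $\underline{R}\overset{\bullet}{+}\underline{S}$ requires the unit of $\underline{R}$ to equal the zero of $\underline{S}$, whereas $P(A)$ and $P(B)$ have only one element in common and in each of them it is the \emph{zero}. Indeed, since $A\cap B=\emptyset$ we have $P(A)\cap P(B)=P(A\cap B)=\{\emptyset\}$, and $\emptyset$ is the bottom of both $\underline{P(A)}$ and $\underline{P(B)}$. The remedy is to replace the first factor by its order dual $\underline{P(A)}^{\partial}$, which is again a Boolean algebra (the dual of a Boolean algebra is a Boolean algebra) but now has unit $\emptyset$ and zero $A$. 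Setting $\underline{P}:=\underline{P(A)}^{\partial}$ and $\underline{Q}:=\underline{P(B)}$, we get $1_{\underline{P}}=\emptyset=0_{\underline{Q}}$, so the element identified in the glued sum is $\emptyset$ with itself; hence $\underline{P}\overset{\bullet}{+}\underline{Q}$ is well defined and its underlying set is exactly $P(A)\cup P(B)$. (If one prefers to keep the ordinal sum genuinely disjoint, one can first pass to disjoint isomorphic copies of $\underline{P}$ and $\underline{Q}$, glue, and then transport along the evident bijection with $P(A)\cup P(B)$.)

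It then remains only to invoke Theorem~\ref{t13}(2) with these $\underline{P}$ and $\underline{Q}$: the operations $\sqcap,\sqcup,\neg,\lrcorner$ defined there, together with $\bot:=0_{\underline{P}}=A$ and $\top:=1_{\underline{Q}}=B$, turn $P(A)\cup P(B)=\underline{P}\overset{\bullet}{+}\underline{Q}$ into a pure and trivial dBa, which is what we want. The only real obstacle is spotting the dualization of one factor, which lets the common element $\emptyset$ simultaneously play the role of a top and of a bottom; once that is in place the statement is immediate from Theorem~\ref{t13}. No separate argument is needed when $A$ or $B$ is empty, since the one-element Boolean algebra is permitted in Theorem~\ref{t13}(2).
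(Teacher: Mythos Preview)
Your proof is correct and follows the same route as the paper, which simply records the corollary as a ``direct application of Theorem~\ref{t13}'' without further detail. Your observation that one factor must be dualized so that the shared element $\emptyset$ serves as $1_{\underline{P}}=0_{\underline{Q}}$ is exactly the missing piece needed to make the underlying set of the glued sum equal to $P(A)\cup P(B)$, and the paper leaves this implicit.
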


	In the following proposition we give some properties of the operations $"+"$ and $"\cdot"$ important for calculations on congruence relations.
	
	\begin{proposition}\label{p1} Let $\underline{D}$ be a dBa, $a,b\in D$, $I$ be an ideal of $\underline{D}_{\sqcap}$ and $F$ a filter of $\underline{D}_{\sqcup}$. The following statements hold :
		
		\begin{enumerate}
			\item  $a+b\in D_{\sqcap}$ and $a\cdot b\in D_{\sqcup}$.
			\item  $a+ b=b+a$ ~~and~~$a\cdot b=b\cdot a$.
			\item  $a+ a=\bot$ ~~and~~$a\cdot a=\top$.
			\item  $a+ \bot=a\sqcap a$~~and~~$a\cdot \top=a\sqcup a$.
			\item  $a+\top=\neg a$~~and~~$a\cdot\bot=\lrcorner a$. 		
			\item  $a,b\in I\implies a+b\in I$~~and~~$a,b\in F\implies a\cdot b\in F$.
			\item  $a+(b\sqcap b)=a+b$~~and~~$a\cdot (b\sqcup b)=a\cdot b$.
			\item  $(a\sqcap a)+(b\sqcap b)=a+b$~~and~~$(a\sqcup a)\cdot (b\sqcup b)=a\cdot b$.
			\item  $(a+b)+c=a+(b+c)$~~and~~$(a\cdot b)\cdot c=a\cdot(b\cdot c)$.
			\item  For a regular dBa $\underline{D}$, $(a+b=\bot~\text{and}~ a\cdot b=\top) \iff a=b$.
			
		\end{enumerate}
		
	\end{proposition}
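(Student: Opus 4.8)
The plan is to reduce everything to Boolean algebra by observing that, modulo the retractions $x\mapsto x\sqcap x$ onto $D_{\sqcap}$ and $x\mapsto x\sqcup x$ onto $D_{\sqcup}$, the operation $+$ is the symmetric difference of the Boolean algebra $\underline{D}_{\sqcap}$ and $\cdot$ is its De~Morgan dual (the biconditional) in $\underline{D}_{\sqcup}$. Since the ten items come in dual pairs, I would prove only the $+$/$D_{\sqcap}$ halves, the $\cdot$/$D_{\sqcup}$ halves following by the $\sqcap$--$\sqcup$ (and $\bot$--$\top$) duality of the dBa axioms together with Proposition~\ref{p8}(2).

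First I would record the \emph{reduction identity} $x+y=(x\sqcap x)\triangle(y\sqcap y)$, where $\triangle$ denotes symmetric difference in the Boolean algebra $\underline{D}_{\sqcap}$ of Proposition~\ref{p8}(1). This follows straight from the definition of $+$ together with $\neg z=\neg(z\sqcap z)$ (axiom~$(8a)$), $z\sqcap(w\sqcap w)=z\sqcap w$ (axioms $(1a)$ and $(2a)$), and the fact that on $D_{\sqcap}$ the operations $\sqcap$, $\vee$, $\neg$ are precisely meet, join and complement; the dual identity $x\cdot y=(x\sqcup x)\leftrightarrow(y\sqcup y)$ in $\underline{D}_{\sqcup}$ is obtained the same way. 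Granting these, the bulk of the proposition becomes a list of elementary Boolean facts: $(1)$ is $x\vee y\in D_{\sqcap}$ from Proposition~\ref{p4}(5); $(2)$ and $(9)$ are commutativity and associativity of $\triangle$; $(3)$ is $u\triangle u=0$; $(4)$ is $u\triangle 0=u$ once one notes that $\bot\ (=\neg\top)$ is the zero and $\neg\bot\ (=\top\sqcap\top)$ the unit of $\underline{D}_{\sqcap}$; $(5)$ is $u\triangle 1=\neg u$ (using $(8a)$ again); $(7)$ and $(8)$ say only that $x+y$ depends on its arguments through $x\sqcap x$ and $y\sqcap y$, which is visible on the face of the reduction identity; and $(6)$ holds because $a\triangle b\sqsubseteq_{\sqcap}a\vee b$ and ideals of $\underline{D}_{\sqcap}$ are closed downward and under $\vee$.

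For $(9)$ I would spell out the small step: by $(1)$ we have $x+y\in D_{\sqcap}$, so $(x+y)+z=(x+y)\triangle(z\sqcap z)=\bigl((x\sqcap x)\triangle(y\sqcap y)\bigr)\triangle(z\sqcap z)$ and, symmetrically, $x+(y+z)=(x\sqcap x)\triangle\bigl((y\sqcap y)\triangle(z\sqcap z)\bigr)$, and these coincide by associativity of symmetric difference in $\underline{D}_{\sqcap}$. For $(10)$, the implication $a=b\Rightarrow(a+b=\bot,\ a\cdot b=\top)$ is just $(3)$. Conversely, $a+b=\bot$ means $(a\sqcap a)\triangle(b\sqcap b)=\bot$, hence $a\sqcap a=b\sqcap b$ (a symmetric difference vanishes only between equal elements); dually $a\cdot b=\top$ forces $a\sqcup a=b\sqcup b$, since the biconditional equals the unit only between equal elements. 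Proposition~\ref{p8}(3) then yields $a\sqsubseteq b$ and $b\sqsubseteq a$, and regularity of $\underline{D}$ turns the quasi-order into a partial order, so $a=b$.

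The only step that is not pure bookkeeping is the reduction identity itself: once one trusts that passing to $D_{\sqcap}$ genuinely turns $+$ into Boolean symmetric difference, every item is a one-line consequence. So the main obstacle is merely to verify the three-term expansions such as $\neg a\sqcap b=\neg(a\sqcap a)\wedge(b\sqcap b)$ inside $\underline{D}_{\sqcap}$ carefully, making sure they use only axioms $(1a)$, $(2a)$, $(8a)$ and Proposition~\ref{p8}(1), with no appeal to purity or regularity except in $(10)$, where regularity is explicitly assumed.
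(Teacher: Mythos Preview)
Your proposal is correct and rests on the same underlying idea as the paper: pushing everything into the Boolean algebra $\underline{D}_{\sqcap}$ (dually $\underline{D}_{\sqcup}$) via $x\mapsto x\sqcap x$. The difference is purely organisational. The paper proves each item by a direct one- or two-line unfolding of the definition of $+$ (using axioms $(1a)$, $(8a)$, $(9a)$, $(10a)$, $(11a)$ as needed) and only invokes the Boolean structure explicitly for item~(9), where it writes ``because $+$ is associative in $D_{\sqcap}$'' after reducing via~(8). You instead isolate the reduction identity $x+y=(x\sqcap x)\triangle(y\sqcap y)$ once at the start and then read off all ten items as standard facts about symmetric difference. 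Your route is slightly more conceptual and makes the uniformity transparent; the paper's route is more self-contained in that each item can be checked in isolation without first establishing the reduction. Neither approach requires anything the other does not.
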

	
	\begin{proof}
		
		\begin{enumerate}
			\item  Follows from $(5)$ of  Proposition \ref{p4}.
			\item  Follows from the commutativity of "$\sqcap$" and "$\sqcup$". 
			\item  We have $a+ a:=(a\sqcap \neg a)\vee(\neg a\sqcap a)$; using axiom $(9a)$, we obtain $a+a=\bot\vee\bot=\bot.$ Analogously we have $a\cdot a=\top$.
			\item  We have $a+ \bot:=(a\sqcap \neg \bot)\vee(\neg a\sqcap \bot)=(a\sqcap\top\sqcap\top)\vee\bot~(\text{by axiom}~(10a))=(a\sqcap a)\vee\bot=a\sqcap a.$
			Similarly, we have $a\cdot \top=a\sqcup a$.
			\item  We have $a+ \top:=(a\sqcap \neg \top)\vee(\neg a\sqcap \top)=(a\sqcap\bot)\vee(\neg a\sqcap\neg a)~(\text{by axiom}~(11a))=\bot\vee\neg a=\neg a.$ Similarly, we have $a\cdot \bot=\lrcorner a$.
			
			\item  Obvious.
			
			\item  We have $a+ (b\sqcap b):=(a\sqcap \neg (b\sqcap b))\vee(\neg a\sqcap b\sqcap b)=(a\sqcap\neg b)\vee(\neg a\sqcap b)~ (\text{by axioms}~(1a),~(8a)):=a+b.$ Similarly, we have $a\cdot (b\sqcup b)=a\cdot b$.
			
			\item  Consequence of (7).
			
			\item We have:   
			
			$$\begin{array}{lcll}
				(a+ b)+ c&:=&(a\sqcap a+b\sqcap b)+c\sqcap c)&(\text{by}~(8))\\
				&=& a\sqcap a+(b\sqcap b+c\sqcap c)& (\text{because }~+~\text{is associative in}~D_{\sqcap}) \\
				&=& a+(b+c)& (\text{by}~(8)).
				
			\end{array}$$
			Similarly, we have $(a\cdot b)\cdot c=a\cdot(b\cdot c)$.
			
			\item Assume that $\underline{D}$ is a regular dBa. Then we have:
			
			$$\begin{array}{lll}
				\left\{\begin{array}{lcl}
					a+b&=&\bot\\
					a\cdot b&=&\top
				\end{array}
				\right.&\implies& \left\{\begin{array}{lcl}
					a+b+b&=&\bot+b\\
					a\cdot b\cdot b&=&\top\cdot b
				\end{array}
				\right.\\
				
				&\implies& \left\{\begin{array}{lcl}
					a+\bot&=&\bot+b\\
					a\cdot\top&=&\top\cdot b
				\end{array}
				\right.~~(\text{by}~(3))\\
				&\implies& \left\{\begin{array}{lcl}
					a\sqcap a&=&b\sqcap b\\
					a\sqcup a&=&b\sqcup b
				\end{array}
				\right.~~(\text{by}~(4))\\
				&\implies& \left\{\begin{array}{lcl}
					a\sqcap a\sqsubseteq_{\sqcap} b\sqcap b&\text{and}&b\sqcap b\sqsubseteq_{\sqcap} a\sqcap a\\
					a\sqcup a\sqsubseteq_{\sqcup} b\sqcup b&\text{and}&b\sqcup b\sqsubseteq_{\sqcup} a\sqcup a
				\end{array}
				\right.\\
				&\implies& a\sqsubseteq b~\text{and}~ b\sqsubseteq a~~(\text{by}~(3)~\text{of Proposition}~\ref{p8})\\
				
				&\implies& a=b~~(\text{because}~\underline{D}~\text{is regular}).
			\end{array}$$
		\end{enumerate}
		The converse is obvious.
		
	\end{proof}
	
	The following results put more light on the description of congruence relations of a pure and trivial dBa and the distributivity of the lattice of those congruence relations.
	
	\begin{corollary}\label{c5} Let $\underline{D}$ be a pure and trivial dBa.
		
		\begin{enumerate}
			\item If $I$ is an ideal of $\underline{D}_{\sqcap}$ and  $F$ is a filter of $\underline{D}_{\sqcup}$, then $$(a,b)\in\theta\iff  a+ b\in I~\text{and}~ a\cdot b\in F$$ defines a congruence relation on $\underline{D}.$
			\item The map 
			\[\begin{array}{clcc}
				\phi:& Con(\underline{D})&\longrightarrow& \mathcal{I}(\underline{D}_{\sqcap})\times \mathcal{F}(\underline{D}_{\sqcup})\\
				&       & &\\
				& \theta& \longmapsto&([\bot]_{\theta}\cap D_{\sqcap},~[\top]_{\theta}\cap D_{\sqcup})
			\end{array}\] is an isomorphism (with the inverse given by $(1)$) between the lattice of congruence relations on $\underline{D}$ and the lattice $\mathcal{I}(\underline{D}_{\sqcap})\times \mathcal{F}(\underline{D}_{\sqcup})$.
		\end{enumerate}
		
	\end{corollary}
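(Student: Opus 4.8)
The plan is to reduce everything to the two results of Vormbrock already recorded, namely Proposition~\ref{t8} and Corollary~\ref{c6}; the only extra ingredient is that triviality forces the side conditions $\neg F\subseteq I$ and $\lrcorner I\subseteq F$ to hold for \emph{every} ideal $I$ of $\underline{D}_{\sqcap}$ and \emph{every} filter $F$ of $\underline{D}_{\sqcup}$, so that in a pure and trivial dBa every such pair $(I,F)$ is automatically a congruence generating pair.

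For~(1), I would first check that $(I,F)$ is a congruence generating pair. Since $F\subseteq D_{\sqcup}$, Proposition~\ref{p9}(2) gives $\neg f=\bot$ for every $f\in F$; as $\bot$ is the zero of the Boolean algebra $\underline{D}_{\sqcap}$ it lies in every ideal, so $\neg F\subseteq\{\bot\}\subseteq I$. Dually, since $I\subseteq D_{\sqcap}$, Proposition~\ref{p9}(1) gives $\lrcorner i=\top$ for every $i\in I$, and $\top$ is the unit of $\underline{D}_{\sqcup}$, hence belongs to every filter, so $\lrcorner I\subseteq\{\top\}\subseteq F$. Because $\underline{D}$ is pure, Proposition~\ref{t8} now applies verbatim and shows that the relation $\theta$ defined by $a+b\in I$ and $a\cdot b\in F$ is a congruence on $\underline{D}$.

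For~(2), the observation in~(1) shows that the set of congruence generating pairs is exactly $\mathfrak{C}(\underline{D})=\mathcal{I}(\underline{D}_{\sqcap})\times\mathcal{F}(\underline{D}_{\sqcup})$, and that the order on $\mathfrak{C}(\underline{D})$ (namely $(I,F)\le(G,H)$ iff $I\subseteq G$ and $F\subseteq H$) is precisely the product order; this identification of ordered sets, not just sets, is what makes Corollary~\ref{c6} directly applicable. Since $\underline{D}$ is pure, Corollary~\ref{c6} then yields that $\phi\colon\theta\mapsto([\bot]_{\theta}\cap D_{\sqcap},\,[\top]_{\theta}\cap D_{\sqcup})$ is an order isomorphism from $Con(\underline{D})$ onto $\mathcal{I}(\underline{D}_{\sqcap})\times\mathcal{F}(\underline{D}_{\sqcup})$, whose inverse is the map of Proposition~\ref{t8}, i.e. the rule in~(1). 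Finally, by Proposition~\ref{p10} each of $\mathcal{I}(\underline{D}_{\sqcap})$ and $\mathcal{F}(\underline{D}_{\sqcup})$ is a distributive lattice, hence so is their direct product; and an order isomorphism between two posets that are both lattices automatically preserves finite joins and meets, so $\phi$ is a lattice isomorphism (and in particular $Con(\underline{D})$ is distributive).

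There is no genuine obstacle here: the argument is essentially bookkeeping on top of the cited results. The two points deserving a line of care are (i) using each hypothesis in the right place — purity to invoke Proposition~\ref{t8} and Corollary~\ref{c6}, triviality to discharge $\neg F\subseteq I$ and $\lrcorner I\subseteq F$ — and (ii) the routine remark that an order isomorphism between posets is a lattice isomorphism whenever both posets happen to be lattices.
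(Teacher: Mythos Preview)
Your proposal is correct and follows essentially the same approach as the paper: use Proposition~\ref{p9} (triviality) to verify $\neg F\subseteq\{\bot\}\subseteq I$ and $\lrcorner I\subseteq\{\top\}\subseteq F$, then invoke Proposition~\ref{t8} for part~(1) and Corollary~\ref{c6} for part~(2). Your write-up is simply more explicit than the paper's, in particular spelling out why the order isomorphism is a lattice isomorphism, which the paper leaves implicit.
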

	
	\begin{proof}
		
		\begin{enumerate}
			\item Let $I$ be an ideal of $\underline{D}_{\sqcap}$ and let $F$ be a filter of $\underline{D}_{\sqcup}$. By   Proposition \ref{p9} we have $\neg F=\{\bot \}\subseteq I$ and $\lrcorner I=\{\top \}\subseteq F$. Therefore applying Proposition \ref{t8} we obtain the result.
			\item By  $(1)$, $\mathfrak{C}(\underline{D})=\mathcal{I}(\underline{D}_{\sqcap})\times \mathcal{F}(\underline{D}_{\sqcup})$ and by applying Corollary \ref{c6} we obtain the result. 
		\end{enumerate}
		
	\end{proof}
	
	\begin{corollary}\label{c3}
		Let $\underline{D}$ be a pure and trivial dBa. The lattices $Con(\underline{D})$ and $Con(\underline{D}_{\sqcap})\times Con(\underline{D}_{\sqcup})$ are isomorphic.
	\end{corollary}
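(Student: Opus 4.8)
The plan is to obtain the claimed isomorphism by composing the isomorphism of Corollary~\ref{c5}(2) with component-wise isomorphisms coming from Proposition~\ref{p10}. Since $\underline{D}$ is pure and trivial, Proposition~\ref{p8} tells us that $\underline{D}_{\sqcap}$ and $\underline{D}_{\sqcup}$ are Boolean algebras, so Proposition~\ref{p10} applies to each of them separately.

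First I would invoke Corollary~\ref{c5}(2) to get a lattice isomorphism $Con(\underline{D}) \cong \mathcal{I}(\underline{D}_{\sqcap}) \times \mathcal{F}(\underline{D}_{\sqcup})$. Next, applying Proposition~\ref{p10} to the Boolean algebra $\underline{D}_{\sqcap}$ yields a lattice isomorphism $\mathcal{I}(\underline{D}_{\sqcap}) \cong Con(\underline{D}_{\sqcap})$, and applying it to $\underline{D}_{\sqcup}$ yields $\mathcal{F}(\underline{D}_{\sqcup}) \cong Con(\underline{D}_{\sqcup})$. Taking the product of these two maps gives a lattice isomorphism $\mathcal{I}(\underline{D}_{\sqcap}) \times \mathcal{F}(\underline{D}_{\sqcup}) \cong Con(\underline{D}_{\sqcap}) \times Con(\underline{D}_{\sqcup})$, since a direct product of order isomorphisms between lattices is again a lattice isomorphism (joins and meets in a product lattice are computed coordinate-wise). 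Composing this with the isomorphism from Corollary~\ref{c5}(2) produces the desired isomorphism $Con(\underline{D}) \cong Con(\underline{D}_{\sqcap}) \times Con(\underline{D}_{\sqcup})$.

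There is essentially no genuine obstacle here: the statement is a straightforward chaining of isomorphisms already established in the excerpt. The only point meriting a line of care is the verification that the product map is order-preserving and reflecting in both directions, which is immediate from the definition of the product order $(I,F)\le(G,H)\iff I\subseteq G$ and $F\subseteq H$ on $\mathcal{I}(\underline{D}_{\sqcap})\times\mathcal{F}(\underline{D}_{\sqcup})$ and the analogous definition on $Con(\underline{D}_{\sqcap})\times Con(\underline{D}_{\sqcup})$. One could also remark, as a corollary of the proof, that $Con(\underline{D})$ is a distributive lattice, being a product of the distributive lattices $Con(\underline{D}_{\sqcap})$ and $Con(\underline{D}_{\sqcup})$.
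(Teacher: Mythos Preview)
Your proof is correct and follows essentially the same approach as the paper: invoke Corollary~\ref{c5}(2) to get $Con(\underline{D})\cong \mathcal{I}(\underline{D}_{\sqcap})\times \mathcal{F}(\underline{D}_{\sqcup})$, then apply Proposition~\ref{p10} componentwise and compose. The paper's version is simply terser, omitting the routine verification that a product of lattice isomorphisms is again a lattice isomorphism.
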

	
	\begin{proof}
		By $(2)$ of Corollary \ref{c5}, we have $Con(\underline{D})\cong \mathcal{I}(\underline{D}_{\sqcap})\times \mathcal{F}(\underline{D}_{\sqcup})$. Since $\mathcal{I}(\underline{D}_{\sqcap})\cong Con(\underline{D}_{\sqcap})$ and $\mathcal{F}(\underline{D}_{\sqcup})\cong Con(\underline{D}_{\sqcup})$ (by Proposition \ref{p10}), we have $Con(\underline{D})\cong Con(\underline{D}_{\sqcap})\times Con(\underline{D}_{\sqcup})$.
	\end{proof}
	
	\begin{corollary}
		The class of pure and trivial dBas is congruence-distributive.
	\end{corollary}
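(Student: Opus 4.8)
The plan is to read this off Corollary~\ref{c3} together with standard facts about Boolean algebras, since essentially no new work is required. Fix an arbitrary pure and trivial dBa $\underline{D}$; the goal is to show that the lattice $Con(\underline{D})$ is distributive, and since $\underline{D}$ was arbitrary this will give congruence-distributivity of the whole class.

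First I would invoke Corollary~\ref{c3}, which yields a lattice isomorphism $Con(\underline{D})\cong Con(\underline{D}_{\sqcap})\times Con(\underline{D}_{\sqcup})$. Next I would observe that $\underline{D}_{\sqcap}$ and $\underline{D}_{\sqcup}$ are Boolean algebras by Proposition~\ref{p8}, and that the congruence lattice of any Boolean algebra $\underline{B}$ is distributive: indeed, by Proposition~\ref{p10} the lattice $Con(\underline{B})$ is isomorphic to $\mathcal{I}(\underline{B})$, which is a distributive lattice. Hence both $Con(\underline{D}_{\sqcap})$ and $Con(\underline{D}_{\sqcup})$ are distributive lattices.

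Then I would use the elementary fact that a direct product of distributive lattices is again distributive: joins and meets in $Con(\underline{D}_{\sqcap})\times Con(\underline{D}_{\sqcup})$ are computed coordinatewise, so the distributive identity in the product follows immediately from the distributive identities in each factor. Since distributivity is preserved under lattice isomorphism, $Con(\underline{D})$ is distributive, i.e.\ $\underline{D}$ is congruence-distributive. As this argument applies to every pure and trivial dBa, the class is congruence-distributive.

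There is essentially no obstacle here; the only point requiring a word of care is that the map in Corollary~\ref{c3} is an isomorphism of \emph{lattices} (not merely of ordered sets or of underlying sets), so that it indeed transports the distributive law — but this is exactly what Corollary~\ref{c3} and the isomorphisms feeding into it assert. The coordinatewise computation of meets and joins in the product can be stated in one line without a detailed calculation.
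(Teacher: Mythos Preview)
Your proposal is correct and follows essentially the same approach as the paper: invoke Corollary~\ref{c3} to get $Con(\underline{D})\cong Con(\underline{D}_{\sqcap})\times Con(\underline{D}_{\sqcup})$, use Proposition~\ref{p10} to see each factor is distributive, and conclude since a product of distributive lattices is distributive. The paper's proof is just a terser version of what you wrote.
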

	
	\begin{proof}
		Let $\underline{D}$ be a pure and trivial dBa. Then by Corollary \ref{c3}, $Con(\underline{D})$ and $Con(\underline{D}_{\sqcap})\times Con(\underline{D}_{\sqcup})$ are isomorphic; moreover $Con(\underline{D}_{\sqcap})$ and $Con(\underline{D}_{\sqcup})$ are distributive lattices ( by Proposition \ref{p10}), hence  $Con(\underline{D})$ is distributive as a direct product of two distributive lattices.
	\end{proof}
	
	In order to facilitate the description of sub-directly irreducible double Boolean algebra, we divide the class of dBas into five sub-classes. 
	
	\begin{definition}
		Let $\underline{D}$ be a dBa.
		\begin{enumerate}
			\item  $\underline{D}$ is of \emph{type $I$} if 
			$D_{\sqcup}=\{\top \}$.
			\item  $\underline{D}$ is of \emph{type $II$} if $D_{\sqcap}=\{\bot \}$. 
			\item  $\underline{D}$ is of \emph{type $III$} if $\bot\sqcup\bot=\bot$ and $\top\sqcap\top=\top$.
			\item  $\underline{D}$ is of \emph{type $IV$} if $\bot=\top$ or $\bot\sqcup\bot\neq\bot$ or $\top\sqcap\top\neq\top$.
			\item  $\underline{D}$ is of \emph{type $V$} if $\underline{D}_{p}$ is a Boolean algebra.
		\end{enumerate}
	\end{definition}
	
	\begin{remark}
		
		\begin{enumerate}
			\item 	Every dBa of type $I$ (resp. type $II$) is a trivial dBa, every trivial dBa is a dBa of type $IV$ and every dBa of type $V$ is a dBa of type $III$.
			\item Every dBa is either of type $III$ or type $IV$.
		\end{enumerate}
		
	\end{remark}
	
	Before continuing, we give an example to clarify these types.
	
	\begin{example}\label{e2}\text{}
		
		\begin{enumerate}
			\item  Let $\underline{B}=(B;\wedge, \vee,~', 0, 1)$ be a Boolean algebra. The algebra 
			
			$\underline{D}=(D;\sqcap,\sqcup,\neg,\lrcorner,\bot,\top)$ where $D=B$, $\bot=0$, $\top=1$ and for all $x, y\in D$,  $x\sqcup y=\top$, $x\sqcap y=x\wedge y$, $\neg x=x'$, $\lrcorner x=\top$ (resp. $x\sqcap y=\bot$, $x\sqcup y=x\vee y$, $\neg x=\bot$, $\lrcorner x=x'$) is a \emph{dBa of type $I$} (resp. \emph{ type $II$}). In particular, if $B$ is infinite, then $D$ is also infinite.
			\item  The dBa $\underline{D}_{6}$ of Example \ref{e1} is of \emph{type $IV$.}
			\item   The algebra  $\underline{D}_{4}=(\{\bot, a, b,  \top \};\sqcap,\sqcup,\neg,\lrcorner,\bot,\top)$ with $D_{4_\sqcap}=\{\bot,\top \}$, $D_{4_\sqcup}=\{\bot, a, b, \top\}$, the Hasse diagram given in  Figure~\ref{D4}  and the Cayley tabular given in  Table~\ref{OD4} is a pure dBa of type $III$.
			
			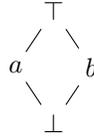
\begin{figure}[h]
				\begin{center}
					\begin{tikzpicture}[scale=.5]
						\node (one) at (0,1.5) {$\top$};
						\node (a) at (-1,0) {$a$};
						\node (d) at (1,0) {$b$};
						\node (zero) at (0,-1.5) {$\bot$};
						\draw (zero)--(a)--(one)--(d)--(zero);
					\end{tikzpicture}
					\caption{\emph{four}-element  dBa of type $III$}\label{D4}
				\end{center}
			\end{figure}
			
			\begin{figure}[h]
				
				\begin{tabular}{|c|cccc|}
					\hline 
					$\sqcap$ & $\bot$ & $a$ & $b$ &  $\top$\\
					\hline
					$\bot$ & $\bot$ & $\bot$ & $\bot$ & $\bot$ \\
					$a$ & $\bot$ & $\bot$ & $\bot$ & $\bot$ \\
					$b$ & $\bot$ & $\bot$ & $\bot$ & $\bot$ \\	
					$\top$ & $\bot$ & $\bot$ & $\bot$ & $\top$ \\
					\hline
				\end{tabular} 
				\qquad
				\begin{tabular}{|c|cccc|}
					\hline 
					$\sqcup$ & $\bot$ & $a$ & $b$ &$\top$\\
					\hline
					$\bot$ & $\bot$ & $a$ & $b$ &$\top$\\
					$a$ & $a$ & $a$ & $\top$ &$\top$\\
					$b$ & $b$ & $\top$ & $b$ & $\top$\\
					$\top$ & $\top$ & $\top$ & $\top$ & $\top$\\
					\hline
				\end{tabular}	
				\qquad
				\begin{tabular}{|c|cccc|}
					\hline 
					$x$& $\bot$ & $a$ & $b$ & $\top$\\
					\hline
					$\neg x$ & $\top$ & $\top$ & $\top$ & $\bot$\\
					$\lrcorner x$ & $\top$ & $b$ & $a$ & $\bot$ \\
					\hline	
				\end{tabular}
				\caption{ Operations $\sqcap$, $\sqcup$, $\neg$ and $\lrcorner$ of $\underline{D}_{4}$}\label{OD4}
			\end{figure} 
		\end{enumerate}
	\end{example}
	
	As algebra of type $I$ or $II$ are just light modifications of Boolean algebras, their lattice of congruence relations is easy to obtain as specify by the following corollary.
	
	\begin{corollary}\label{c1}
		If $\underline{D}$ is a pure dBa of type $I$ (resp. type $II$), then the lattices $Con(\underline{D})$ and $Con(\underline{D}_{\sqcap})$ (resp. $Con(\underline{D}_{\sqcup})$ ) are isomorphic. 
	\end{corollary}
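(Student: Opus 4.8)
The plan is to reduce everything to Corollary \ref{c3} by first observing that triviality is automatic here. By the Remark following the definition of the five types, every dBa of type $I$ (resp. type $II$) is trivial; together with the standing hypothesis that $\underline{D}$ is pure, this makes $\underline{D}$ a pure and trivial dBa, so Corollary \ref{c3} applies and yields a lattice isomorphism $Con(\underline{D}) \cong Con(\underline{D}_{\sqcap}) \times Con(\underline{D}_{\sqcup})$.

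Next I would dispose of the degenerate factor. In the type $I$ case $D_{\sqcup} = \{\top\}$, so $\underline{D}_{\sqcup}$ is a one-element Boolean algebra; its only congruence is $\Delta_{\{\top\}} = \nabla_{\{\top\}}$, hence $Con(\underline{D}_{\sqcup})$ is the one-element lattice $\mathbf{1}$. Since $L \times \mathbf{1} \cong L$ for every lattice $L$, we get $Con(\underline{D}) \cong Con(\underline{D}_{\sqcap})$. The type $II$ case is dual: there $D_{\sqcap} = \{\bot\}$, so $\underline{D}_{\sqcap}$ is a one-element Boolean algebra, $Con(\underline{D}_{\sqcap}) \cong \mathbf{1}$, and $Con(\underline{D}) \cong \mathbf{1} \times Con(\underline{D}_{\sqcup}) \cong Con(\underline{D}_{\sqcup})$.

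I expect no real obstacle; the only points worth stressing are that the purity hypothesis is genuinely used (it is exactly what licenses the appeal to Corollary \ref{c3} --- for instance $\underline{D}_{3,II}$ is of type $I$ but not pure), and that the congruence lattice of a one-element algebra is trivial. As an alternative, one may argue directly: for a pure dBa of type $I$ one checks that $D = D_{\sqcap}$ and, by Proposition \ref{p9}, that $\sqcup$ and $\lrcorner$ are constant on $D$; hence an equivalence relation on $D$ is compatible with $\sqcap, \sqcup, \neg, \lrcorner$ iff it is compatible with $\sqcap$ and $\neg$ (the derived operation $\vee = \neg(\neg\,\cdot\,\sqcap\,\neg\,\cdot\,)$ being automatic), so $Con(\underline{D})$ and $Con(\underline{D}_{\sqcap})$ coincide as sets of relations and the identity map is the required isomorphism; the type $II$ case is again dual.
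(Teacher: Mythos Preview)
Your main argument is correct and is essentially identical to the paper's own proof: observe that type $I$ (resp.\ $II$) forces triviality, apply Corollary~\ref{c3} to get $Con(\underline{D})\cong Con(\underline{D}_{\sqcap})\times Con(\underline{D}_{\sqcup})$, and then collapse the one-element factor coming from $D_{\sqcup}=\{\top\}$ (resp.\ $D_{\sqcap}=\{\bot\}$). Your supplementary direct argument (that for pure type~$I$ one has $D=D_{\sqcap}$ and the operations $\sqcup,\lrcorner$ are constant, so the two congruence lattices literally coincide) is also correct and is a pleasant alternative not given in the paper.
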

	
	\begin{proof}
		Let $\underline{D}$ be a pure dBa of type $I$ (resp. type $II$). Then  $\underline{D}$ is pure and trivial. By Corollary \ref{c3}, the lattices $Con(\underline{D})$ and $Con(\underline{D}_{\sqcap})\times Con(\underline{D}_{\sqcup})$ are isomorphic. Since $D_{\sqcup}=\{\top \}$ (resp. $D_{\sqcap}=\{\bot \}$ ) we have $ Con(\underline{D}_{\sqcup})=\{\Delta_{D_{\sqcup}} \}$ (resp. $ Con(\underline{D}_{\sqcap})=\{\Delta_{D_{\sqcap}} \}$ ). Moreover, $Con(\underline{D}_{\sqcap})\times \{\Delta_{D_{\sqcup}} \}\cong Con(\underline{D}_{\sqcap}) $ (resp. $ \{\Delta_{D_{\sqcap}} \}\times Con(\underline{D}_{\sqcup})\cong Con(\underline{D}_{\sqcup}) $). 

	\end{proof}
	
	We are ready to characterize simple dBas.
	
	\subsection{Simple double Boolean algebras}
	
	We start with non pure and then pure simple dBas.
	
	\begin{proposition}\label{p23}
		Let $\underline{D}$ be a dBa that is not pure. Then,
		$\underline{D}$ is simple if and only if $|D|=2$.
	\end{proposition}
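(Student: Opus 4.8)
The plan is to prove the two implications of the stated equivalence separately. The reverse implication, $|D|=2\Rightarrow\underline{D}$ simple, uses nothing about double Boolean algebras: on a two-element underlying set the only equivalence relations are $\Delta_{D}$ and $\nabla_{D}$, so $Con(\underline{D})=\{\Delta_{D},\nabla_{D}\}$ and $\Delta_{D}\neq\nabla_{D}$. Note this direction is not vacuous: there is a non-pure dBa with two elements, namely the one in which $\bot=\top$ and every operation collapses to that single value, and it will reappear in the list of two-element dBas in Section~4.

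For the forward implication I would argue contrapositively: assuming $\underline{D}$ is not pure and $|D|\ge 3$, I will exhibit a congruence $\theta$ with $\Delta_{D}\subsetneq\theta\subsetneq\nabla_{D}$. The starting observations are that $\bot=\neg\top\in D_{\sqcap}$ and $\top=\lrcorner\bot\in D_{\sqcup}$, by axioms $(11a)$ and $(11b)$ together with (2) of Proposition~\ref{p4}, so that $\bot,\top\in D_{p}$; and that $D_{p}\neq D$ because $\underline{D}$ is not pure, so one can fix some $a\in D\setminus D_{p}$. The proof then splits according to the size of $D_{p}$.

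If $|D_{p}|\ge 2$, then the full relation $\nabla_{D_{p}}$ is a congruence of the pure dBa $\underline{D}_{p}$ (which is a subalgebra of $\underline{D}$ by (4) of Proposition~\ref{p8}) distinct from $\Delta_{D_{p}}$, so by Proposition~\ref{t9} the relation $\theta:=\nabla_{D_{p}}\cup\Delta_{D}$ is a congruence of $\underline{D}$. It lies strictly above $\Delta_{D}$ since $|D_{p}|\ge 2$, and strictly below $\nabla_{D}$ since, choosing any $b\neq a$ (which exists as $|D|\ge 3$), the pair $(a,b)$ belongs to neither $\Delta_{D}$ nor $\nabla_{D_{p}}$; hence $\underline{D}$ is not simple. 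If instead $|D_{p}|=1$, then $D_{\sqcap}=D_{\sqcup}=D_{p}=\{\bot\}$ and $\bot=\top$, whence by (1) and (2) of Proposition~\ref{p4} each of $\sqcap,\sqcup,\neg,\lrcorner$ is the constant map with value $\bot$; consequently every equivalence relation on $D$ is a congruence, and since $|D|\ge 3$ the equivalence relation having a single two-element block and singletons elsewhere is the required $\theta$, so again $\underline{D}$ is not simple.

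I do not expect a genuine analytic obstacle here; the two points that need attention are the bookkeeping that the exhibited $\theta$ is simultaneously proper and non-trivial, and remembering to treat the degenerate case $|D_{p}|=1$ on its own, since there the congruence-lifting lemma, Proposition~\ref{t9}, produces only $\Delta_{D}$ and one argues instead directly from the collapse of all operations.
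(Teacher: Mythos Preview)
Your argument is correct, but it takes a different route from the paper's. The paper avoids the case split on $|D_{p}|$ entirely: having fixed $a\in D\setminus D_{p}$, it observes directly that $\beta:=(D\setminus\{a\})^{2}\cup\{(a,a)\}$ is a congruence on $\underline{D}$, because by Proposition~\ref{p4} every output of $\sqcap,\sqcup,\neg,\lrcorner$ lies in $D_{p}\subseteq D\setminus\{a\}$, so compatibility is automatic regardless of the inputs. Since $\beta\neq\nabla_{D}$, simplicity forces $\beta=\Delta_{D}$, whence $|D\setminus\{a\}|=1$ and $|D|=2$. Your approach instead reuses the congruence-lifting lemma (Proposition~\ref{t9}) in the case $|D_{p}|\ge 2$ and falls back on the collapse of all operations when $|D_{p}|=1$. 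This is perfectly valid and has the virtue of exercising machinery already developed in the paper; the cost is the extra case distinction, which the paper's single construction of $\beta$ handles uniformly---indeed $\beta$ works verbatim in your degenerate case $|D_{p}|=1$ as well.
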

	
	\begin{proof}
		$\implies)$ We assume that $\underline{D}$ is a simple dBa. Since $D$ is not pure and $|D|>1$, there is $a\in D\setminus D_{p}$ .  The equivalence relation $\beta=\left( D\setminus\{a\}\right)^{2}\cup\{(a,a)\}$ is a congruence relation different from $D^{2}$. Hence $\beta=\Delta_{D}$, and the cardinality of $D$ is 2.
		
		$\impliedby)$ Obvious.
		
	\end{proof}
	
	
	\begin{proposition}\label{p22}
		Let $\underline{D}$ be a pure dBa. The following properties are equivalent:
		\begin{enumerate}
			\item $\underline{D}$ is simple
			\item ($D_{\sqcap}=\{\bot \}$ and $|D_{\sqcup}|\le 2$) or ($\top\sqcap\top=\top$ and the only ideals $J$ of $\underline{D}_{\sqcap}$ such that $\neg\lrcorner J\subseteq J$ are $\{\bot \}$ and $D_{\sqcap}$)
			\item ($D_{\sqcup}=\{\top \}$ and $|D_{\sqcap}|\le 2$) or ($\bot\sqcup\bot=\bot$ and the only filters $F$ of $\underline{D}_{\sqcup}$ such that $\lrcorner\neg F\subseteq F$ are $\{\bot \}$ and $D_{\sqcap}$)
		\end{enumerate}
	\end{proposition}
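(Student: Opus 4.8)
The plan is to reduce everything to the combinatorics of congruence generating pairs. By Corollary~\ref{c6}, $\phi\colon Con(\underline{D})\to\mathfrak{C}(\underline{D})$ is a lattice isomorphism carrying $\Delta_D$ to the least pair $(\{\bot\},\{\top\})$ and $\nabla_D$ to the greatest pair $(D_\sqcap,D_\sqcup)$; hence $\underline{D}$ is simple exactly when $(\{\bot\},\{\top\})$ and $(D_\sqcap,D_\sqcup)$ are the \emph{only} congruence generating pairs of $\underline{D}$. Since the order-duality of dBas (which interchanges $\sqcap$ and $\sqcup$, $\neg$ and $\lrcorner$, $\bot$ and $\top$) preserves purity and induces an isomorphism of congruence lattices, and since it transforms statement $(2)$ into statement $(3)$, it is enough to prove $(1)\Leftrightarrow(2)$.

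I would first settle the case $D_\sqcap=\{\bot\}$. Then $\underline{D}_\sqcap$ is the one-element Boolean algebra whose only ideal is $\{\bot\}$, and every filter $F$ of $\underline{D}_\sqcup$ satisfies both $\neg F\subseteq\{\bot\}$ (automatic, since $\neg$ maps $D$ into $D_\sqcap=\{\bot\}$ by Proposition~\ref{p4}) and $\lrcorner\{\bot\}=\{\top\}\subseteq F$; hence $(I,F)\mapsto F$ identifies $\mathfrak{C}(\underline{D})$ with $\mathcal{F}(\underline{D}_\sqcup)$. By Proposition~\ref{p10} and Lemma~\ref{l3} this lattice has at most two elements precisely when $|D_\sqcup|\le 2$, so in this case $\underline{D}$ is simple iff $|D_\sqcup|\le 2$, which is the first disjunct of $(2)$. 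From now on assume $D_\sqcap\neq\{\bot\}$; then the first disjunct of $(2)$ is false, so $(1)\Leftrightarrow(2)$ becomes: $\underline{D}$ is simple iff $\top\sqcap\top=\top$ and the only ideals $J$ of $\underline{D}_\sqcap$ with $\neg\lrcorner J\subseteq J$ are $\{\bot\}$ and $D_\sqcap$.

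For $(1)\Rightarrow(2)$: since $(D_\sqcap,F(\lrcorner D_\sqcap))$ is a congruence generating pair whose first component is not $\{\bot\}$, simplicity forces it to equal $(D_\sqcap,D_\sqcup)$, so the bottom $\lrcorner\top=\bot\sqcup\bot$ of $\underline{D}_\sqcup$ belongs to the filter generated by $\lrcorner D_\sqcap$ and therefore, being the least element, equals a finite meet $\lrcorner x_1\wedge\cdots\wedge\lrcorner x_n$ with $x_1,\dots,x_n\in D_\sqcap$. Applying $\lrcorner$ and Proposition~\ref{p4}(4),(7) gives $\top=(x_1\sqcup x_1)\sqcup\cdots\sqcup(x_n\sqcup x_n)$; since each $x_i\sqsubseteq\top\sqcap\top$ and axiom~$(12)$ yields $(\top\sqcap\top)\sqcup(\top\sqcap\top)=(\top\sqcup\top)\sqcap(\top\sqcup\top)=\top\sqcap\top$, Proposition~\ref{p8}(3) forces $\top\sqsubseteq\top\sqcap\top$; as also $\top\sqcap\top\sqsubseteq\top$, regularity (Proposition~\ref{p11}) gives $\top\sqcap\top=\top$. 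Next, given an ideal $J$ of $\underline{D}_\sqcap$ with $\neg\lrcorner J\subseteq J$, I would check that $F:=\{f\in D_\sqcup:\neg f\in J\}$ is a filter of $\underline{D}_\sqcup$ (up-closed because $\neg$ reverses the order, closed under $\wedge$ by the dBa identity $\neg(f\wedge g)=\neg f\vee\neg g$ valid on $D_\sqcup$) with $\neg F\subseteq J$ and $\lrcorner J\subseteq F$ (the latter being exactly $\neg\lrcorner J\subseteq J$); thus $(J,F)\in\mathfrak{C}(\underline{D})$, and simplicity forces $J\in\{\{\bot\},D_\sqcap\}$.

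For $(2)\Rightarrow(1)$: let $(I,F)$ be any congruence generating pair. From $\lrcorner I\subseteq F$ and $\neg F\subseteq I$ we get $\neg\lrcorner I\subseteq\neg F\subseteq I$, so by hypothesis $I\in\{\{\bot\},D_\sqcap\}$. If $I=D_\sqcap$ then $\top=\top\sqcap\top\in I$, so the bottom $\lrcorner\top$ of $\underline{D}_\sqcup$ lies in $F$ and therefore $F=D_\sqcup$; if $I=\{\bot\}$ then $\neg f=\bot$ for all $f\in F$, and for $f\in D_\sqcup$ with $\neg f=\bot$ one has $f\sqcap f=\neg\neg f=\neg\bot=\top\sqcap\top=\top$, whence $f=f\sqcup f=f\sqcup(f\sqcap f)=f\sqcup\top=\top$ using axiom~$(4b)$, so $F=\{\top\}$. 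Hence the only congruence generating pairs are $(\{\bot\},\{\top\})$ and $(D_\sqcap,D_\sqcup)$, and $\underline{D}$ is simple. The main obstacle is the forward implication: deriving $\top\sqcap\top=\top$ through the interplay of the separating pair $(D_\sqcap,F(\lrcorner D_\sqcap))$, axiom~$(12)$ and regularity, and — more tediously — verifying that the set $F$ built from an ideal $J$ is genuinely a filter, which rests on the routine identities governing $\neg$ and $\lrcorner$ on $D_\sqcap$ and $D_\sqcup$.
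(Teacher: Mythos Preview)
Your argument is correct, but it differs from the paper's in several respects. The paper argues cyclically $(1)\Rightarrow(2)\Rightarrow(3)\Rightarrow(1)$, whereas you invoke the obvious $\sqcap/\sqcup$-duality to reduce to $(1)\Leftrightarrow(2)$ and then prove both directions. For $\top\sqcap\top=\top$, the paper simply observes that $(\{\bot\},F(\top\sqcap\top))$ is a congruence generating pair whose first coordinate is not $D_\sqcap$, whence $F(\top\sqcap\top)=\{\top\}$ and the conclusion is immediate; your detour through $(D_\sqcap,F(\lrcorner D_\sqcap))$, axiom~$(12)$ and regularity reaches the same goal but is considerably longer. For the ideal step, the paper pairs $J$ with the \emph{generated} filter $F(\lrcorner J)$, while you pair it with the set $\{f\in D_\sqcup:\neg f\in J\}$; both work, and your $(2)\Rightarrow(1)$ is then quite clean, since the computation $\neg f=\bot\Rightarrow f\sqcap f=\neg\neg f=\neg\bot=\top\sqcap\top=\top\Rightarrow f=f\sqcup\top=\top$ is short.

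One point deserves care: the identity $\neg(f\wedge g)=\neg f\vee\neg g$ you appeal to on $D_\sqcup$ is not among the listed dBa identities (Proposition~\ref{p4}(6) gives $\neg(x\sqcap y)=\neg x\vee\neg y$, with $\sqcap$, not $\wedge$). It is nevertheless true: since $\neg x=\neg(x\sqcap x)$, it suffices to show $(f\wedge g)\sqcap(f\wedge g)=f\sqcap g$ for $f,g\in D_\sqcup$. One inequality comes from $f\wedge g\sqsubseteq f,g$; for the other, $f\sqcap g\sqsubseteq f,g$ gives $(f\sqcap g)\sqcup(f\sqcap g)\sqsubseteq f\wedge g$ in $D_\sqcup$, and axiom~$(12)$ shows $(f\sqcap g)\sqcup(f\sqcap g)\in D_\sqcap$, so $f\sqcap g\sqsubseteq(f\sqcap g)\sqcup(f\sqcap g)\sqsubseteq(f\wedge g)\sqcap(f\wedge g)$ in $D_\sqcap$. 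You should spell this out rather than label it ``routine'', since the mixing of the two Boolean structures is exactly where such claims can go wrong.
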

	
	\begin{proof}
		$(1)\implies (2)$ 
		
		If $D_{\sqcap}=\{\bot \}$, then let $a\in D_{\sqcup}$, $(\{\bot \}, F(a))\in\mathfrak{C}(\underline{D})$. Since $\underline{D}$ is simple, we have $F(a)=\{\top \}$ or $F(a)=D_{\sqcup}$; i.e. $a=\top$ or $a=\bot\sqcup\bot$. Hence $|D_{\sqcup}|\le 2$.
		We now assume that $D_{\sqcap}\neq\{\bot \}$, then $(\{\bot \}, F(\top\sqcap\top))\in\mathfrak{C}(\underline{D})$.  
		Thus $(\{\bot \},F(\top\sqcap\top))=(\{\bot\},\{\top \})$ or $(\{\bot \},F(\top\sqcap\top))=(D_{\sqcap},D_{\sqcup})$ (because $\underline{D}$ is simple). But $D_{\sqcap}\neq\{\bot \}$, thus  $(\{\bot \},F(\top\sqcap\top))=(\{\bot\},\{\top \})$. Therefore, $F(\top\sqcap\top)=\{\top\}$, and $\top\sqcap\top=\top$. Now,  let $J$ be an ideal of $\underline{D}_{\sqcap}$ such that $\neg\lrcorner J\subseteq J$, then $(J, F(\lrcorner J))\in\mathfrak{C}(\underline{D})$. Since $\underline{D}$ is simple, we have $(J, F(\lrcorner J))=(\{\bot \},\{\top \})$ or $(J, F(\lrcorner J))=(D_{\sqcap}, D_{\sqcup})$, that is $J=\{\bot \}$ or $J=D_{\sqcap}$.\\
		$(2)\implies (3)$
		
		Suppose that $D_{\sqcup}=\{\top\}$, if $D_{\sqcap}=\{\bot\}$ then we are done. Else, then for $a\in D_{\sqcap}$, $J=I(a)$ is an ideal of $\underline{D}_{\sqcap}$ such that $\neg\lrcorner J\subseteq J$.  By $(2)$, $I(a)=\{\bot \}$ or $I(a)=D_{\sqcap}$; i.e. $a=\bot$ or $a=\top\sqcap\top$. Hence, $|D_{\sqcap}|\le 2$.
		
		Now, we assume that, $D_{\sqcup}\neq\{\top\}$. If $D_{\sqcap}=\{\bot\}$, then $\bot\sqcup\bot=\bot$. If not, then $J=I(\bot\sqcup\bot)$ is an ideal of $\underline{D}_{\sqcap}$ such that $\neg\lrcorner J\subseteq J$ (due to $\neg\lrcorner x=\bot$ for all $x\sqsubseteq \bot\sqcup\bot$). 
		By $(2)$, $J=\{\bot \}$ or $J=D_{\sqcap}$. If $J= D_{\sqcap}$, then $\top\sqcap\top\sqsubseteq\bot\sqcup\bot$, and $\top=\top\sqcap\top\sqsubseteq\bot\sqcup\bot$, contradicting the assumption. Therefore, $J=\{\bot \}$, and $\bot\sqcup\bot=\bot$.
		Let $F$ be a filter of $\underline{D}_{\sqcup}$ such that $\lrcorner\neg F\subseteq F$. If $|D_{\sqcup}|\le 2$, then $F=\{\top \}$ or $F=D_{\sqcup}$.
		We assume that $|D_{\sqcup}|>2$. Then $\top\sqcap\top=\top$ (by $(2)$). We will show that $\neg\lrcorner I(\neg F)\subseteq I(\neg F)$. For this fact , let $x\in\neg\lrcorner I(\neg F)$, then there is $a\in I(\lrcorner F)$ such that $x=\neg\lrcorner a$. 
		Since $a$ is in the ideal generated by $\lrcorner F$, there are $f_{1},\ldots f_{n}\in F$ such that $a\sqsubseteq \neg f_{1}\vee\ldots\vee\neg f_{n}$. Thus $\neg\lrcorner a\sqsubseteq\neg\lrcorner \left( \neg f_{1}\vee\ldots\vee\neg f_{n}\right) 
		=\neg\lrcorner\left( \neg f_{1}\sqcup\ldots\sqcup\neg f_{n}\right)=\neg\left( \lrcorner\neg f_{1}\wedge\ldots\wedge\lrcorner\neg f_{n}\right)\in\neg F$. Therefore, $x\in I(\lrcorner F)$. Using again $(2)$, we get $I(\neg F)=\{\bot \}$ or $I(\neg F)=D_{\sqcap}$. Finally, an easy check gives $F=\{\top \}$ or $F=D_{\sqcup}$.\\
		$(3)\implies (1)$
		
		If $D_{\sqcap}=\{\bot\}$ and $|D_{\sqcap}|\le 2$, then $\underline{D}$ is simple.
		We assume now that $D_{\sqcap}\neq\{\bot\}$ or $|D_{\sqcap}|> 2$.
		Let $(J, F)\in\mathfrak{C}(\underline{D})$. Since $\neg\lrcorner F\subseteq F$, using $(3)$, we obtain $F=\{\top \}$ or $F=D_{\sqcup}$ (due to $\bot\sqcup\bot=\bot$). This implies $J=\{\bot \}$ or $J=D_{\sqcap}$. Therefore $(J,F)=(\{\bot\}, \{\top\})$ or $(J, F)=(D_{\sqcap}, D_{\sqcup})$ (due to $\top\sqcap\top\neq\bot=\bot\sqcup\bot$). Hence $\underline{D}$ is simple. 
	\end{proof}
	
		
	
	
	In the case of 
	finite dBas, filters and ideals are principal.   
	We get:
	\begin{corollary}\label{c11}
		Let $\underline{D}$ be a 
		finite pure dBa. The following properties are equivalent:
		\begin{itemize}
			\item [(a)] $\underline{D}$ is simple.
			\item [(b)] ($D_{\sqcap}=\{\bot \}$ and $|D_{\sqcup}|\le 2$) or ($\top\sqcap\top=\top$ and the only elements $a$ of $\underline{D}_{\sqcap}$ such that $\neg\lrcorner a\sqsubseteq a$ are $\bot$ and $\top$).
			\item [(c)] ($D_{\sqcup}=\{\top \}$ and $|D_{\sqcap}|\le 2$) or ($\bot\sqcup\bot=\bot$ and the only elements $b$ of $\underline{D}_{\sqcup}$ such that $b\sqsubseteq\lrcorner\neg b$ are $\bot $ and $\top$).
		\end{itemize}
	\end{corollary}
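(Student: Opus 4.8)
The plan is to deduce Corollary \ref{c11} from Proposition \ref{p22} by using that, in a finite Boolean algebra, every ideal and every filter is principal. First I would recall that Proposition \ref{p22} already gives the pairwise equivalence of its conditions (1), (2), (3), with (a) $\iff$ (1). Hence it suffices to show that, for a finite pure dBa, condition (b) above is equivalent to condition (2) of Proposition \ref{p22}, and that condition (c) above is equivalent to condition (3) of Proposition \ref{p22} — the latter being the order-dual of the former, so only one of the two needs genuine work.

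The key observation is that the map $x\mapsto\neg\lrcorner x$ is \emph{order-preserving} on $\underline{D}_{\sqcap}$: by Proposition \ref{p4}(2) it maps $D_{\sqcap}$ into $D_{\sqcap}$, and being a composite of the two order-reversing maps $\lrcorner$ and $\neg$ (Proposition \ref{p4}(3)), it respects $\sqsubseteq$. Now fix a finite pure dBa $\underline{D}$; then $\underline{D}_{\sqcap}$ is a finite Boolean algebra, so every ideal $J$ of $\underline{D}_{\sqcap}$ has the form $J=I(a)$ where $a$ is the $\sqsubseteq_{\sqcap}$-largest element of $J$, and conversely every principal ideal arises this way. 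For such $J=I(a)$ I would prove $\neg\lrcorner J\subseteq J \iff \neg\lrcorner a\sqsubseteq a$: the forward direction is immediate since $a\in J$, while for the converse, any $x\in I(a)$ satisfies $x\sqsubseteq a$, whence $\neg\lrcorner x\sqsubseteq\neg\lrcorner a\sqsubseteq a$ by monotonicity, so $\neg\lrcorner x\in I(a)=J$.

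With this equivalence, translating (2) of Proposition \ref{p22} into (b) is routine: the first disjuncts coincide verbatim, and for the second disjunct, under the hypothesis $\top\sqcap\top=\top$ the element $\top$ is the $\sqsubseteq_{\sqcap}$-top of $\underline{D}_{\sqcap}$, so the ideals $\{\bot\}$ and $D_{\sqcap}$ correspond precisely to the elements $\bot$ and $\top$; thus ``the only ideals $J$ with $\neg\lrcorner J\subseteq J$ are $\{\bot\}$ and $D_{\sqcap}$'' is exactly ``the only elements $a$ with $\neg\lrcorner a\sqsubseteq a$ are $\bot$ and $\top$''. Dually, since $\underline{D}_{\sqcup}$ is a finite Boolean algebra every filter $F$ is of the form $F(b)$ with $b$ the $\sqsubseteq_{\sqcup}$-least element of $F$, the map $x\mapsto\lrcorner\neg x$ is order-preserving on $\underline{D}_{\sqcup}$, and under $\bot\sqcup\bot=\bot$ the element $\bot$ is the $\sqsubseteq_{\sqcup}$-bottom of $\underline{D}_{\sqcup}$; the same argument then yields that (3) of Proposition \ref{p22} is equivalent to (c).

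I do not expect a real obstacle here; the proof is essentially a dictionary translation. The only point requiring a little care is verifying that the bijection between ideals (resp. filters) of the finite Boolean algebra and their extremal elements sends the distinguished pair $\{\bot\}, D_{\sqcap}$ (resp. $\{\top\}, D_{\sqcup}$) exactly to the distinguished pair $\bot, \top$ — which is precisely where the side hypotheses $\top\sqcap\top=\top$ (resp. $\bot\sqcup\bot=\bot$) are used, to place $\top$ in $D_{\sqcap}$ (resp. $\bot$ in $D_{\sqcup}$).
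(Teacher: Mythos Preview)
Your proposal is correct and follows essentially the same approach as the paper: apply Proposition~\ref{p22}, use that in a finite Boolean algebra every ideal (resp.\ filter) is principal, and translate the condition $\neg\lrcorner I(a)\subseteq I(a)$ into $\neg\lrcorner a\sqsubseteq a$ (resp.\ dually). You supply more detail than the paper does---in particular the monotonicity argument for $x\mapsto\neg\lrcorner x$ and the careful identification of $\{\bot\},D_{\sqcap}$ with $\bot,\top$ under the hypothesis $\top\sqcap\top=\top$---but the strategy is identical.
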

	
	\begin{proof}
		We apply Proposition \ref{p22}, and use the fact that 
		every ideal (resp. filter) of $\underline{D}_{\sqcap}$ (resp. $\underline{D}_{\sqcup}$ ) is principal. Thus ($\neg\lrcorner I(a)\subseteq I(a)\iff \neg\lrcorner a\sqsubseteq a$ for any $a\in D_{\sqcap}$) (resp. $\lrcorner\neg F(b)\subseteq F(b)\iff b\sqsubseteq\lrcorner\neg b$ for any $b\in D_{\sqcup}$). 
	\end{proof}
	
	\begin{example}\label{e6}
		
		\begin{enumerate}
			\item The double Boolean algebra $\underline{D}_{4}$ of Example \ref{e2} is simple. In fact, $\top\sqcap~\top=\top$ and $D_{\sqcap}=\{\bot,\top \}$. Thus the only elements $a$ of $\underline{D}_{\sqcap}$ such that $\neg\lrcorner a\sqsubseteq a$ are $\bot$ and $\top$. 
			\item The double Boolean algebra $\underline{D}_{6}$ of Example \ref{e1} is not simple because $|D_{\sqcup}|>2$ and $\top\sqcap\top\neq\top$.
		\end{enumerate}
		
	\end{example}
	The simple algebras of some sub-classes of dBas have  low cardinality as specified in the next corollary.
	\begin{corollary}\label{c10}
		Let $\underline{D}$ be a  dBa. If $\underline{D}$ is  a dBa of type $l$, $l\in\{I,II,IV,V \}$, then
		$\underline{D}$ is simple if and only if $|D|\le 2$.
	\end{corollary}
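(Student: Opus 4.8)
The plan is to prove the two implications separately. For ``if'': if $|D|\le 2$, the only equivalence relations on $D$ are $\Delta_D$ and $\nabla_D$ (both of which are congruences), so $Con(\underline{D})=\{\Delta_D,\nabla_D\}$ and $\underline{D}$ is simple. For ``only if'', suppose $\underline{D}$ is simple. If $\underline{D}$ is not pure, Proposition \ref{p23} already gives $|D|=2$, so from now on assume $\underline{D}$ is pure. Since every dBa of type $I$ or $II$ is trivial and hence of type $IV$, it remains to treat the two cases $l=IV$ and $l=V$.

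Suppose first $\underline{D}$ is pure of type $IV$, so that $\bot=\top$, or $\bot\sqcup\bot\neq\bot$, or $\top\sqcap\top\neq\top$. If $\bot=\top$, then $(10a)$ and $(11a)$ give $\top\sqcap\top=\neg\bot=\neg\top=\bot$, and dually $\bot\sqcup\bot=\top$, so both $\underline{D}_{\sqcap}$ and $\underline{D}_{\sqcup}$ reduce to one element and purity forces $D=\{\bot\}$. If $\bot\sqcup\bot\neq\bot$, then the second alternative of Proposition \ref{p22}$(3)$ cannot hold, so $D_{\sqcup}=\{\top\}$ and $|D_{\sqcap}|\le 2$; hence $\bot\sqcup\bot=\top$, and Remark \ref{r1}$(1)$ applied to $\bot\in D_{\sqcap}$ yields $\top=\bot\sqcup\bot\in D_{\sqcap}$, so $D=D_{\sqcap}\cup D_{\sqcup}=D_{\sqcap}$ has at most two elements. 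The subcase $\top\sqcap\top\neq\top$ is dual, using Proposition \ref{p22}$(2)$ and Remark \ref{r1}$(2)$ to get $\bot\in D_{\sqcup}$ and $D=D_{\sqcup}$ with $|D_{\sqcup}|\le 2$.

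Finally, suppose $\underline{D}$ is pure of type $V$; this case must be treated on its own since type $V$ algebras are of type $III$. Then $\underline{D}=\underline{D}_p$ \emph{is} a Boolean algebra, so $D_{\sqcap}=D_{\sqcup}=D$, the operations $\sqcap,\sqcup$ are its meet and join, and $\neg=\lrcorner$ is its complementation; therefore a subset of $D^2$ is a congruence of $\underline{D}$ exactly when it is a congruence of this Boolean algebra, and simplicity together with Lemma \ref{l3} (a simple algebra with more than one element is sub-directly irreducible) forces $|D|\le 2$. The step I expect to be the main obstacle is the bookkeeping in the type $IV$ case: one must first isolate the degenerate subcase $\bot=\top$, then determine from the hypotheses on $\bot\sqcup\bot$ and $\top\sqcap\top$ which alternative of Proposition \ref{p22} survives, and finally use Remark \ref{r1} to push $\top$ into $D_{\sqcap}$ (or $\bot$ into $D_{\sqcup}$) so that purity collapses $D$ onto a single Boolean part.
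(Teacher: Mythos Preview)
Your proof is correct and follows the route the paper implicitly intends: the corollary is stated without proof immediately after Propositions~\ref{p23} and~\ref{p22}, and you derive it from exactly these (together with Remark~\ref{r1} and Lemma~\ref{l3}), handling the pure/non-pure split and then the type~$IV$/type~$V$ split just as the surrounding results suggest. Your reduction of types~$I$ and~$II$ to type~$IV$ via triviality is a clean shortcut, and the bookkeeping in the type~$IV$ case (isolating $\bot=\top$, then using Proposition~\ref{p22} to force $D_{\sqcup}=\{\top\}$ or $D_{\sqcap}=\{\bot\}$, then Remark~\ref{r1} to collapse $D$ onto one Boolean part) is exactly right.
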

	There are also simple dBas of large cardinality in the subclass of algebras of type $III$. For example, we have $\underline{D}_{4}$ of Example \ref{e6}.
	
	Simple algebras are particular case of sub-directly irreducible algebras. We continue our contribution with the exploration of sub-directly irreducible members of sub-classes of type $l$, $l\in\{I, II, IV, V\}$.
	\section{Sub-directly irreducible  double Boolean algebras}
	
	In this section, we  determine up to isomorphism all sub-directly irreducible dBas of type $I$, $II$, $IV$ and  $V$.  
	
	\begin{proposition}\label{p7}
		There are exactly (up to isomorphism) four 
		\textit{two}-element double Boolean algebras.
	\end{proposition}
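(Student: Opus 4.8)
The plan is to show that a two-element dBa is entirely pinned down by the pair of Boolean algebras $\underline{D}_{\sqcap}$, $\underline{D}_{\sqcup}$ that it carries, and then simply to enumerate the possibilities.

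First I would collect the structural constraints. By Proposition~\ref{p8} we have $\bot\in D_{\sqcap}$ and $\top\in D_{\sqcup}$, so $\bot\sqcap\bot=\bot$ and $\top\sqcup\top=\top$; and by Proposition~\ref{p4}, for all $x,y\in D$ the elements $x\sqcap y$ and $\neg x$ lie in $D_{\sqcap}$, while $x\sqcup y$ and $\lrcorner x$ lie in $D_{\sqcup}$. Since $D$ has exactly two elements and $D_{\sqcap}$, $D_{\sqcup}$ are Boolean algebras contained in $D$ and containing $\bot$ resp.\ $\top$, each has cardinality $1$ or $2$: if $|D_{\sqcap}|=1$ then $D_{\sqcap}=\{\bot\}$ and the operations $\sqcap$, $\neg$ are both constant with value $\bot$; if $|D_{\sqcap}|=2$ then $D_{\sqcap}=D$ carries the unique two-element Boolean structure, whose top is $\neg\bot=\top\sqcap\top$, which forces $\bot\neq\top$, $\top\sqcap\top=\top$, with $\sqcap$ the meet and $\neg$ the complement on $D=\{\bot,\top\}$; and dually for $D_{\sqcup}$. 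So, once the operations are known on the underlying two-element set, $\underline{D}$ is determined by $(|D_{\sqcap}|,|D_{\sqcup}|)\in\{1,2\}^{2}$, leaving at most four algebras.

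Next I would run through the four cases. For $(2,2)$ one gets the two-element Boolean algebra $\underline{2}$ viewed as a dBa, with $\sqcap=\wedge$, $\sqcup=\vee$, $\neg=\lrcorner={}'$, which is a dBa since every Boolean algebra is one in this way. For $(2,1)$ one gets the dBa of type $I$ obtained from $\underline{2}$ as in Example~\ref{e2}(1), and for $(1,2)$ the dBa of type $II$ obtained from $\underline{2}$; in both cases the axioms are checked directly (or read off from Example~\ref{e2}). For $(1,1)$, all four operations are constant; evaluating axiom $(12)$ at $x=\bot$ gives $\bot\sqcup\bot=\top\sqcap\top$, i.e.\ $\top=\bot$, since $\sqcap$ has value $\bot$ and $\sqcup$ has value $\top$; hence here necessarily $\bot=\top$, $D=\{\bot,c\}$ for some $c\neq\bot$, and every operation is constant equal to $\bot$, so all the dBa identities hold trivially (every term, including those built using $\vee$ and $\wedge$, evaluates to $\bot$). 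Thus each of the four cases yields a genuine dBa; the first three are pure, the last is not.

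Finally I would observe that these four are pairwise non-isomorphic: an isomorphism fixes $\bot$ and $\top$ and preserves the equationally defined subsets $D_{\sqcap}$, $D_{\sqcup}$, and the four algebras have $(|D_{\sqcap}|,|D_{\sqcup}|)$ equal respectively to $(2,2)$, $(2,1)$, $(1,2)$, $(1,1)$, all distinct. This yields exactly four two-element dBas up to isomorphism, whose Cayley tables can then be displayed. The only point requiring care — more a subtlety than an obstacle — is the use of axiom $(12)$ in the $(1,1)$ case: it is precisely what forces $\bot=\top$ there, so that this degenerate algebra is a legitimate dBa, whereas the same configuration with $\bot\neq\top$ would violate $(12)$ and is therefore excluded. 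Everything else is a routine finite verification.
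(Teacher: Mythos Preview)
Your proof is correct and follows essentially the same approach as the paper: both arguments split into four cases and determine the Cayley tables in each. The only cosmetic difference is the choice of case invariant: the paper branches on the pair $(\top\sqcap\top,\bot\sqcup\bot)\in\{\bot,\top\}^{2}$, whereas you branch on $(|D_{\sqcap}|,|D_{\sqcup}|)\in\{1,2\}^{2}$; since $|D_{\sqcap}|=2\iff\top\sqcap\top=\top$ and $|D_{\sqcup}|=2\iff\bot\sqcup\bot=\bot$ on a two-element set, these parametrizations coincide, and your version is, if anything, a bit more explicit about why each case is uniquely determined and why the four algebras are pairwise non-isomorphic.
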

	
	\begin{proof}
		Let $\underline{D}_{2}$ be a \textit{two}-element double Boolean algebra, then
		
		$(\top\sqcap\top,\bot\sqcup\bot)\in\{(\bot,\bot),(\bot,\top),(\top,\bot),(\top,\top)\}$. Based on this set, we will  distinguish four cases :
		\begin{itemize}
			\item \textbf{\underline{Case 1}}:  $(\top\sqcap\top,\bot\sqcup\bot)=(\top,\top)$, then $\underline{D}_{2}$ is denoted by $\underline{D}_{2,I}$. 
			Its Cayley's tables are given 
			in Figure~\ref{OD2I}. Moreover, $D_{2,I_\sqcap}=\{\bot,\top\}$ and $D_{2,I_\sqcup}=\{\top\}$.
			
			\begin{figure}[h]
				\begin{tabular}{|c|cc|}
					\hline 
					$\sqcap$ & $\bot$ & $\top$\\
					\hline
					$\bot$ & $\bot$ & $\bot$  \\   
					$\top$ & $\bot$ & $\top$ \\
					\hline
				\end{tabular}
				\qquad
				\begin{tabular}{|c|cc|}
					\hline 
					$\sqcup$ & $\bot$ & $\top$\\
					\hline
					$\bot$ & $\top$ & $\top$  \\
					$\top$ & $\top$ & $\top$ \\
					\hline
					
				\end{tabular} 
				\qquad
				\begin{tabular}{|ccc|}
					\hline 
					$x$& $\bot$ &  $\top$\\
					\hline
					$\neg x$ & $\top$ & $\bot$\\
					$\lrcorner x$ & $\top$ & $\top$\\
					\hline
				\end{tabular}
				\caption{ Operations $\sqcap$, $\sqcup$, $\neg$ and $\lrcorner$ of $\underline{D}_{2,I}$}\label{OD2I}
			\end{figure}
			
			\item \textbf{\underline{Case 2}}: $(\top\sqcap\top,\bot\sqcup\bot)=(\bot,\bot)$, then $\underline{D}_{2}$ is denoted by $\underline{D}_{2,II}$. 
			Its Cayley's tables 
			are in Figure~\ref{OD2II}. Moreover, $D_{2,II_\sqcap}=\{\bot\}$ and $D_{2,II_\sqcup}=\{\bot, \top\}$.
			
			\begin{figure}[h]
				\begin{tabular}{|c|cc|}
					\hline 
					$\sqcap$ & $\bot$ & $\top$\\
					\hline
					$\bot$ & $\bot$ & $\bot$  \\
					$\top$ & $\bot$ & $\bot$ \\
					\hline
				\end{tabular}
				\qquad
				\begin{tabular}{|c|cc|}
					\hline 
					$\sqcup$ & $\bot$ & $\top$\\
					\hline
					$\bot$ & $\bot$ & $\top$  \\
					$\top$ & $\top$ & $\top$ \\
					\hline
					
				\end{tabular} 
				\qquad
				\begin{tabular}{|ccc|}
					\hline 
					$x$& $\bot$ &  $\top$\\
					\hline
					$\neg x$ & $\bot$ & $\bot$\\
					$\lrcorner x$ & $\top$ & $\bot$\\
					\hline
					
				\end{tabular}
				\caption{ Operations $\sqcap$, $\sqcup$, $\neg$ and $\lrcorner$ of $\underline{D}_{2,II}$}\label{OD2II}
			\end{figure}
			
			\item \textbf{\underline{Case 3}}: $(\top\sqcap\top,\bot\sqcup\bot)=(\bot,\top)$, then $\underline{D}_{2}$ is denoted by $\underline{D}_{2,III}$. 
			We have $\bot=\top$, therefore this algebra is not pure. We set $D_{2,III}:=\{\bot, a\}$. The Cayley's tables of $\underline{D}_{2,III}$  
			are  in Figure~\ref{OD2III}. Moreover, $D_{2,III_\sqcap}=D_{2,III_\sqcup}=\{\bot\}$.
			
			\begin{figure}[h]
				\begin{tabular}{|c|cc|}
					\hline 
					$\sqcap$ & $\bot$ & $a$\\
					\hline
					$\bot$ & $\bot$ & $\bot$  \\
					$a$ & $\bot$ & $\bot$ \\
					\hline
				\end{tabular}
				\qquad
				\begin{tabular}{|c|cc|}
					\hline 
					$\sqcup$ & $\bot$ & $a$\\
					\hline
					$\bot$ & $\bot$ & $\bot$  \\
					$a$ &$\bot$ & $\bot$ \\
					\hline
					
				\end{tabular} 
				\qquad
				\begin{tabular}{|ccc|}
					\hline 
					$x$& $\bot$ &  $a$\\
					\hline
					$\neg x$ & $\bot$ & $\bot$\\
					$\lrcorner x$ & $\bot$ & $\bot$\\
					\hline
					
				\end{tabular}
				\caption{ Operations $\sqcap$, $\sqcup$, $\neg$ and $\lrcorner$ of $\underline{D}_{2,III}$}\label{OD2III}
			\end{figure}
			\item \textbf{\underline{Case 4}}: $(\top\sqcap\top,\bot\sqcup\bot)=(\top,\bot)$, then  $\neg\bot=\lrcorner\bot$, $\neg\top=\lrcorner\top$, $\neg\neg\bot=\bot$ and $\neg\neg\top=\top$, therefore $\underline{D_{2}}$ is the \textit{two}-element  Boolean algebra $\underline{2}$.
		\end{itemize}
	\end{proof}

	\begin{remark}
		The double Boolean algebras $\underline{D}_{2,I}$, $\underline{D}_{2,II}$, $\underline{D}_{2,III}$ and $\textit{\underline{2}}$  are simple double Boolean algebras.
	\end{remark} 
	
	\begin{corollary}\label{co1}
		\begin{enumerate}
			\item  The 
			algebras $\underline{D}_{2,I}$ and $\underline{D}_{2,III}$ are up to isomorphism, the only  simple double Boolean algebras of type $I$ having more than one element.
			\item  The 
			algebras  $\underline{D}_{2,II}$ and $\underline{D}_{2,III}$ are up to isomorphism, the only  simple double Boolean algebras of type $II$ having more than one element.
			\item  The 
			algebras $\underline{D}_{2,I}$, $\underline{D}_{2,II}$ and $\underline{D}_{2,III}$  are up to isomorphism, the only  simple trivial  double Boolean algebras  having more than one element.
			\item The 
			algebras $\underline{D}_{2,I}$, $\underline{D}_{2,II}$ and $\underline{D}_{2,III}$  are up to isomorphism, the only  simple  double Boolean algebras of type $IV$ having more than one element.
			\item The 
			algebras $\textit{\underline{2}}$ and $\underline{D}_{2,III}$ are up to isomorphism, the only  simple  double Boolean algebras of type $V$ having more than one element. 
		\end{enumerate}
	\end{corollary}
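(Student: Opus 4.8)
The plan is to reduce the whole statement to two facts already in hand: Corollary~\ref{c10}, which says that a simple dBa of type $l\in\{I,II,IV,V\}$ has at most two elements, and Proposition~\ref{p7} together with the Remark following it, which exhibits the four \textit{two}-element dBas $\underline{D}_{2,I}$, $\underline{D}_{2,II}$, $\underline{D}_{2,III}$, $\underline{2}$ and records that all four are simple. Thus, if $\underline{D}$ is simple, of one of the types $I$, $II$, $IV$, $V$, and has more than one element, then $|D|=2$ and $\underline{D}$ is isomorphic to one of these four; conversely each of the four is simple. Everything then comes down to deciding, for each of the four, to which type-classes it belongs, which I would read off from $D_\sqcap$, $D_\sqcup$, $\bot\sqcup\bot$ and $\top\sqcap\top$ using the Cayley tables in the proof of Proposition~\ref{p7}.

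Carrying this out: $\underline{D}_{2,I}$ has $D_\sqcup=\{\top\}$ and $\top\sqcap\top=\bot\sqcup\bot=\top$, so it is of type $I$ and trivial (hence of type $IV$), but not of type $II$ (since $D_\sqcap=\{\bot,\top\}$) and not of type $V$ (since $\bot\notin D_\sqcup$, so $\underline{D}_p$ is not a Boolean algebra); dually, $\underline{D}_{2,II}$ is of types $II$ and $IV$ and trivial, but not of type $I$ or $V$. In $\underline{D}_{2,III}$ one has $\bot=\top$, hence $D_\sqcap=D_\sqcup=\{\bot\}$, so it is simultaneously of types $I$, $II$, $IV$ (being trivial) and $V$ (its pure part being the one-element Boolean algebra). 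Finally $\underline{2}$ has $D_\sqcap=D_\sqcup=\{\bot,\top\}$ with $\bot\sqcup\bot=\bot$ and $\top\sqcap\top=\top$, so it is of type $V$, not trivial, and not of type $I$, $II$ or $IV$.

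Each item is then obtained by collecting the algebras of the corresponding type: type $I$ gives $\underline{D}_{2,I}$ and $\underline{D}_{2,III}$; type $II$ gives $\underline{D}_{2,II}$ and $\underline{D}_{2,III}$; the trivial ones, and equally the type $IV$ ones, are $\underline{D}_{2,I}$, $\underline{D}_{2,II}$, $\underline{D}_{2,III}$ (here one uses that trivial implies type $IV$ and that, among the four, only $\underline{2}$ is non-trivial and it is not of type $IV$); and type $V$ gives $\underline{2}$ and $\underline{D}_{2,III}$. I do not anticipate a real obstacle: the mathematical content sits entirely in Corollary~\ref{c10} and Proposition~\ref{p7}, and the only delicate point is the convention that the one-element algebra underlying $\underline{D}_{2,III}$ counts as a Boolean algebra, so that $\underline{D}_{2,III}$ is genuinely of type $V$.
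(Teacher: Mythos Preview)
Your proposal is correct and follows precisely the same route as the paper's own proof, which is the one-line ``Follows from Corollary~\ref{c10} and Proposition~\ref{p7}.'' You have simply unpacked the routine bookkeeping (checking, for each of the four two-element dBas, which type-classes it belongs to) that the paper leaves implicit.
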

	
	\begin{proof}
		Follows from Corollary \ref{c10} and Proposition \ref{p7}.
	\end{proof}
	
	The following lemma is the characterization of finite  sub-directly irreducible dBas given by Vormbrock. We can generalize this result to sub-directly irreducible algebras of type $l$, $l\in\{I, II, IV, V\}$. 
	\begin{lemma}\label{l2} \cite{11}
		A finite double Boolean algebra $\underline{D}$ is sub-directly irreducible if and only if $\underline{D}$ is simple.
	\end{lemma}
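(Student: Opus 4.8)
The plan is to prove the two implications separately. The forward one requires nothing: if $\underline D$ is simple, then $Con(\underline D)\setminus\{\Delta_D\}$ is either empty (the one-element algebra) or equal to $\{\nabla_D\}$, so it has a smallest element and $\underline D$ is sub-directly irreducible by Proposition~\ref{t4}. For the converse I would argue by contraposition: given $\underline D$ finite and not simple, I would exhibit two incomparable atoms of $Con(\underline D)$; since $Con(\underline D)$ is then finite, $Con(\underline D)\setminus\{\Delta_D\}$ has no smallest element, so $\underline D$ is not sub-directly irreducible, again by Proposition~\ref{t4}.

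The tool for the reduction is that every fundamental operation of a dBa takes values in $D_\sqcap\cup D_\sqcup$ (recall $\bot\in D_\sqcap$ and $\top\in D_\sqcup$, by axioms (9a), (9b) and Proposition~\ref{p4}(1)); hence no operation can return an element of $D\setminus D_p$, so for each $a\in D\setminus D_p$ the equivalence relation $\beta_a:=\Delta_D\cup(D\setminus\{a\})^2$ is a congruence --- the one already used in the proof of Proposition~\ref{p23} --- and so is $\sigma:=\bigcap_{a\in D\setminus D_p}\beta_a=\Delta_D\cup D_p^2$. If $\underline D$ is not pure, then $|D|\ge 3$ by Proposition~\ref{p23}; when $|D_p|=1$ all operations are constant, $Con(\underline D)$ is the full partition lattice on $D$ and already has at least three atoms, while when $|D_p|\ge 2$ one combines the relations $\beta_a$ with congruences that collapse a non-pure element $a$ onto $a\sqcap a$ (or $a\sqcup a$), and invokes Proposition~\ref{t9} and axiom~(12), to produce two incomparable atoms; so it remains to handle pure $\underline D$. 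For pure $\underline D$, Corollary~\ref{c6} identifies $Con(\underline D)$ with the lattice $\mathfrak{C}(\underline D)$ of congruence generating pairs $(I,F)$, subject to $\neg F\subseteq I$ and $\lrcorner I\subseteq F$, and finiteness makes every ideal and filter occurring here principal --- exactly the setting of Corollary~\ref{c11}. Since $\underline D$ is not simple, condition (b) of Corollary~\ref{c11} fails; unfolding its negation gives: either $D_\sqcap=\{\bot\}$ and $|D_\sqcup|\ge 3$, in which case $\underline D_\sqcup$ is a Boolean algebra with at least four elements, hence has two distinct coatoms, producing two incomparable minimal pairs $(\{\bot\},F)$; or $\top\sqcap\top\neq\top$, i.e.\ $\underline D$ is of type $IV$, which is covered by Corollary~\ref{c10}; or $\top\sqcap\top=\top$ and some $a\in D_\sqcap\setminus\{\bot,\top\}$ satisfies $\neg\lrcorner a\sqsubseteq a$, in which case $(I(a),F(\lrcorner a))$ is a proper nontrivial congruence generating pair and, shrinking it suitably and pairing it with the congruence coming from the dual failure of Corollary~\ref{c11}(c) in $\underline D_\sqcup$, one obtains two distinct atoms of $\mathfrak{C}(\underline D)$.

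The part I expect to be the main obstacle is this last case of the pure situation: the coupling conditions $\neg F\subseteq I$ and $\lrcorner I\subseteq F$ keep the small congruence generating pairs from being chosen freely, so the candidates arising from the failure of Corollary~\ref{c11}(b) must be checked to be genuinely incomparable \emph{minimal} elements of $\mathfrak{C}(\underline D)$, not merely nontrivial ones lying above a common monolith --- and this is exactly where the way $\neg$ and $\lrcorner$ act between the atoms of $\underline D_\sqcap$ and the coatoms of $\underline D_\sqcup$ has to be used in full. (The statement is originally due to Vormbrock~\cite{11}, and for a complete proof one may simply cite it.)
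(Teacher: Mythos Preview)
The paper does not prove this lemma at all: it is stated with the citation~\cite{11} and no argument, and is then used as a black box in the subsequent propositions. So there is no ``paper's own proof'' to compare against; your closing parenthetical --- that one may simply cite Vormbrock --- is exactly what the paper does.

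As for your sketch itself, it has real gaps beyond the one you flag. First, in the pure case with $\top\sqcap\top\neq\top$ you invoke Corollary~\ref{c10}, but that corollary only says such a $\underline D$ is not \emph{simple}, which is your hypothesis; it says nothing about sub-direct irreducibility, so the appeal is circular. (What actually handles this case in the paper is the explicit pair of congruence generating pairs built in the proof of Proposition~\ref{t5}, which comes \emph{after} Lemma~\ref{l2} and is logically independent of it.) Second, in the non-pure case with $|D_p|\ge 2$ your proposed congruence ``collapsing $a$ onto $a\sqcap a$'' need not exist: Remark~\ref{rem5} requires $a\sqcup a=(a\sqcap a)\sqcup(a\sqcap a)$, and axiom~(12) only gives $(a\sqcap a)\sqcup(a\sqcap a)=(a\sqcup a)\sqcap(a\sqcup a)$, which equals $a\sqcup a$ only when $a\sqcup a\in D_\sqcap$; moreover, the congruence $\beta_a=(D\setminus\{a\})^2\cup\Delta_D$ \emph{contains} $\nabla_{D_p}\cup\Delta_D$, so the two are comparable and cannot witness failure of sub-direct irreducibility. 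Third, your acknowledged ``main obstacle'' is genuine: in the remaining pure case you have one nontrivial pair $(I(a),F(\lrcorner a))$, but producing a second one that meets it in $(\{\bot\},\{\top\})$ requires a real argument about how $\neg$ and $\lrcorner$ interact on atoms and coatoms, and nothing in your outline supplies it.
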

	
	
	\begin{theorem}\label{t20}
		
		\begin{enumerate}
			\item  The 
			algebras $\underline{D}_{2,I}$ and $\underline{D}_{2,III}$ are up to isomorphism, the only  sub-directly irreducible 
			dBas of type $I$ having more than one element.
			\item  The 
			algebras  $\underline{D}_{2,II}$ and $\underline{D}_{2,III}$ are up to isomorphism, the only  sub-directly irreducible 
			dBas of type $II$ having more than one element.
			\item The 
			algebras $\underline{D}_{2,I}$, $\underline{D}_{2,II}$ and $\underline{D}_{2,III}$  are up to isomorphism, the only  sub-directly irreducible  
			dBas of type $IV$ having more than one element.
			\item The 
			algebras $\textit{\underline{2}}$ and $\underline{D}_{2,III}$ are up to isomorphism, the only  sub-directly irreducible  
			dBas of type $V$ having more than one element. 
		\end{enumerate}
	\end{theorem}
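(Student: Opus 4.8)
The plan is to prove that, for $l\in\{I,II,IV,V\}$, a dBa $\underline{D}$ of type $l$ with $|D|>1$ is sub-directly irreducible if and only if it is simple; Theorem~\ref{t20} then follows immediately from Corollary~\ref{co1}. One implication is clear: if $\underline{D}$ is simple and $|D|>1$, then $Con(\underline{D})=\{\Delta_D,\nabla_D\}$, so $\nabla_D$ is the least non-trivial congruence and $\underline{D}$ is sub-directly irreducible by Proposition~\ref{t4}. (In the finite case the converse is Lemma~\ref{l2}, but the argument below does not need finiteness.) For the converse I would treat the pure and the non-pure case separately, using in the pure case the congruence descriptions of Corollaries~\ref{c1},~\ref{c3} and~\ref{c6}, and recalling that, by Proposition~\ref{t4}, sub-direct irreducibility depends only on the lattice $Con(\underline{D})$.

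\textbf{Pure case.} Let $\underline{D}$ be pure, of type $l$, with $|D|>1$ and sub-directly irreducible. If $l=I$, Corollary~\ref{c1} gives $Con(\underline{D})\cong Con(\underline{D}_{\sqcap})$, so $\underline{D}_{\sqcap}$ is a sub-directly irreducible Boolean algebra, hence $\underline{D}_{\sqcap}\cong\underline{2}$ or $|D_{\sqcap}|=1$ by Lemma~\ref{l3}. From axioms $(9b)$ and $(12)$ one gets $\top\sqcap\top=\top$ in every type~$I$ dBa (because $(\top\sqcap\top)\sqcup(\top\sqcap\top)=(\top\sqcup\top)\sqcap(\top\sqcup\top)=\top\sqcap\top$ lies in $D_{\sqcup}=\{\top\}$), so $\top\in D_{\sqcap}$; then in the first subcase $D_{\sqcap}=\{\bot,\top\}$ and, by purity, $D=D_{\sqcap}\cup D_{\sqcup}=\{\bot,\top\}$, i.e. $\underline{D}\cong\underline{D}_{2,I}$, whereas $|D_{\sqcap}|=1$ forces $\bot=\top$, hence $|D|=1$. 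The case $l=II$ is dual ($\underline{D}\cong\underline{D}_{2,II}$), and the case $l=V$ is immediate, since then $\underline{D}=\underline{D}_p$ is a sub-directly irreducible Boolean algebra, so $\underline{D}\cong\underline{2}$. For $l=IV$: if $\underline{D}$ is trivial, Corollary~\ref{c3} gives $Con(\underline{D})\cong Con(\underline{D}_{\sqcap})\times Con(\underline{D}_{\sqcup})$, and a direct product of two lattices each with more than one element has no least element above its bottom; hence one of $\underline{D}_{\sqcap},\underline{D}_{\sqcup}$ is one-element, i.e. $\underline{D}$ is of type~$I$ or~$II$, and we are back to the previous cases. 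If $\underline{D}$ is non-trivial, put $u=\bot\sqcup\bot$ and $v=\top\sqcap\top$; by Remark~\ref{r1}, $u,v\in D_{\sqcap}\cap D_{\sqcup}$, and type~$IV$ together with non-triviality forces $u\neq v$ and (swapping $\sqcap$ and $\sqcup$ if needed) $u\neq\bot$. Using regularity (Proposition~\ref{p11}) and $Con(\underline{D})\cong\mathfrak{C}(\underline{D})$ (Corollary~\ref{c6}), I would exhibit two distinct non-trivial congruence generating pairs with meet $(\{\bot\},\{\top\})$ in $\mathfrak{C}(\underline{D})$: the pair $(I(u),\{\top\})$ together with $(\{\bot\},F(v))$ if $v\neq\top$, or together with $(I(\neg u),F(\lrcorner I(\neg u)))$ if $v=\top$. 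Verifying that these are legitimate congruence generating pairs uses the implications ``$x\sqsubseteq u\Rightarrow x\sqcup x=u\Rightarrow\lrcorner x=\top$'' and their duals, together with $\neg\neg x=x\sqcap x$ and the antitonicity of $\neg,\lrcorner$ (Proposition~\ref{p4}); their existence contradicts sub-direct irreducibility. Thus a pure sub-directly irreducible dBa of type~$IV$ with $|D|>1$ is $\underline{D}_{2,I}$ or $\underline{D}_{2,II}$. In every case $\underline{D}\in\{\underline{D}_{2,I},\underline{D}_{2,II},\underline{2}\}$, all of which are simple.

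\textbf{Non-pure case.} Let $\underline{D}$ be non-pure, of type $l$, sub-directly irreducible, $|D|>1$. By Proposition~\ref{p4}(1),(2) every fundamental operation of $\underline{D}$ takes values in $D_p=D_{\sqcap}\cup D_{\sqcup}$, so $\rho:=D_p^2\cup\Delta_D$ is a congruence, and so is $\beta_a:=(D\setminus\{a\})^2\cup\{(a,a)\}$ for each $a\in D\setminus D_p$. If $|D_p|=1$, then $\bot=\top$, every operation is constant, $Con(\underline{D})$ is the partition lattice of $D$, and this is sub-directly irreducible only for $|D|\le2$; as $D_p\subsetneq D$ we get $|D|=2$, so $\underline{D}\cong\underline{D}_{2,III}$ (Proposition~\ref{p7}). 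If $|D_p|\ge2$, let $\mu$ be the least non-trivial congruence (Proposition~\ref{t4}); since $\mu\subseteq\beta_a$ for all $a\notin D_p$, we get $\mu\subseteq\bigcap_{a\notin D_p}\beta_a=\rho$, so $\mu$ identifies only elements of $D_p$, and restricting to the pure subalgebra $\underline{D}_p$ (using Proposition~\ref{t9}) shows $\mu\cap D_p^2$ is the least non-trivial congruence of $\underline{D}_p$; since $\underline{D}_p$ is a pure dBa of type~$l$ with $|D_p|>1$, the pure case gives $\underline{D}_p\in\{\underline{D}_{2,I},\underline{D}_{2,II},\underline{2}\}$, so $D_p=\{\bot,\top\}$ with $\bot\neq\top$. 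Now fix $a\in D\setminus\{\bot,\top\}$: as $a\sqcap a,a\sqcup a\in\{\bot,\top\}$, axiom~$(12)$ applied to $a$ gives $a\sqcap a=a\sqcup a$ unless $\underline{D}_p$ is $\underline{D}_{2,I}$ or $\underline{D}_{2,II}$, in which case $\underline{D}$ is of type~$I$ (resp.~$II$), so $z\sqcup z=\top$ (resp. $z\sqcap z=\bot$) for every $z$. In all cases, collapsing $a$ with $\bot$ if $a\sqcap a=\bot$, or with $\top$ if $a\sqcap a=\top$, defines a congruence $\theta$; then $\theta$ and $\rho$ are non-trivial with $\theta\cap\rho=\Delta_D$, so no least non-trivial congruence can exist, contradicting sub-direct irreducibility. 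Hence $|D_p|=1$, $|D|=2$, and $\underline{D}\cong\underline{D}_{2,III}$, which is simple.

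The hardest step is the pure, non-trivial, type~$IV$ subcase: building the two complementary congruence generating pairs and checking the compatibility conditions $\neg F\subseteq I$ and $\lrcorner I\subseteq F$ for them — this is precisely where non-triviality enters, keeping $u\neq\bot$ or $v\neq\top$. The bookkeeping in the non-pure case (identifying $\underline{D}_p$, then choosing the correct one-point collapse, with axiom~$(12)$ excluding the ``mixed'' behaviour $a\sqcap a\neq a\sqcup a$) is also somewhat delicate, but routine once the structure is understood.
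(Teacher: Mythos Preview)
Your overall strategy is sound and in several places more economical than the paper's. The paper proves the four items through separate Propositions~\ref{p20}, \ref{p21}, \ref{t5}, \ref{t10}, each time splitting into a pure and a non-pure case, and in the non-pure case further splitting according to whether $|D_p|$ is $1$, $2$, or $\ge3$ (invoking Lemma~\ref{l2} in the finite subcase $|D_p|=2$, $|D|=3$, and Lemma~\ref{l6} when $|D_p|\ge3$). Your non-pure argument is more uniform: observing that each $\beta_a=(D\setminus\{a\})^2\cup\{(a,a)\}$ is a congruence for $a\notin D_p$, and that $\bigcap_{a\notin D_p}\beta_a=\rho=D_p^2\cup\Delta_D$, forces the monolith $\mu$ into $\rho$ and hence shows $\underline{D}_p$ itself is sub-directly irreducible, reducing to the pure case in one step. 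In the pure, non-trivial, type~$IV$ subcase both proofs exhibit two congruence generating pairs with trivial meet; your second pair $(I(\neg u),F(\lrcorner I(\neg u)))$ is a harmless variant of the paper's $(I(\neg(\bot\sqcup\bot)),D_{\sqcup})$, and the verification that $\neg F\subseteq I$ goes through because every $f$ in that filter satisfies $f\sqsupseteq\bot\sqcup\bot=u$, whence $\neg f\sqsubseteq\neg u$.

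There is one genuine slip in the final step of your non-pure case. You write ``collapsing $a$ with $\bot$ if $a\sqcap a=\bot$, or with $\top$ if $a\sqcap a=\top$''. This rule fails when $\underline{D}_p\cong\underline{D}_{2,II}$: there $a\sqcap a=\bot$ for every $a$, but if $a\sqcup a=\top$ then $\bot\sqcup\bot=\bot\neq\top=a\sqcup a$, so Remark~\ref{rem5} does not apply to the pair $(a,\bot)$, and indeed $\Delta_D\cup\{(a,\bot),(\bot,a)\}$ is \emph{not} compatible with $\sqcup$. The fix is immediate: collapse $a$ with the unique $b\in\{\bot,\top\}$ satisfying both $b\sqcap b=a\sqcap a$ and $b\sqcup b=a\sqcup a$. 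A short case check over the three possibilities $\underline{D}_p\in\{\underline{2},\underline{D}_{2,I},\underline{D}_{2,II}\}$ shows such $b$ always exists (for $\underline{D}_{2,II}$ one must key on $a\sqcup a$ rather than $a\sqcap a$), and then Remark~\ref{rem5} yields the congruence $\theta$ with $\theta\cap\rho=\Delta_D$ as you intended. With this correction the proof is complete.
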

	
	The proof of Theorem \ref{t20} is organized as follows: 
	items $(1)$ and $(2)$ 
	are in Proposition \ref{p20}, and the proof of item $(3)$ and $(4)$ in Proposition 
	\ref{t5} and \ref{t10} respectively.

	\begin{lemma}\label{l6}
		Let $\underline{D}$ be a dBa such that $|D_{p}|>1$. If $\underline{D}$ is sub-directly irreducible, then $\underline{D}_{p}$ is also sub-directly irreducible.
	\end{lemma}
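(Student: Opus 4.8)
The plan is to invoke Proposition~\ref{t4}. Since $|D_p|>1$, the algebra $\underline{D}_p$ is not the \textit{one}-element algebra, so it suffices to produce a minimum congruence in $Con(\underline{D}_p)\setminus\{\Delta_{D_p}\}$. As $|D|\ge|D_p|>1$ and $\underline{D}$ is sub-directly irreducible, Proposition~\ref{t4} gives a minimum congruence $\mu$ in $Con(\underline{D})\setminus\{\Delta_D\}$, and I will show that the restriction $\mu\cap D_p^2$ is the sought minimum. Note first that $\mu\cap D_p^2\in Con(\underline{D}_p)$ because $\underline{D}_p$ is a sub-algebra of $\underline{D}$ (by $(4)$ of Proposition~\ref{p8}), and the restriction of a congruence to a sub-algebra is a congruence.

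The bridge between the two congruence lattices is the map $\theta\mapsto\theta':=\theta\cup\Delta_D$ from $Con(\underline{D}_p)$ to $Con(\underline{D})$, which is well defined by Proposition~\ref{t9}. Three routine facts will be used: the map sends $\Delta_{D_p}$ to $\Delta_{D_p}\cup\Delta_D=\Delta_D$; if $\theta\neq\Delta_{D_p}$ then $\theta'\neq\Delta_D$, since an off-diagonal pair of $\theta$ stays off-diagonal in $\theta'$; and $\theta'\cap D_p^2=\theta$, since $\Delta_{D_p}\subseteq\theta\subseteq D_p^2$.

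The step I expect to be the crux is showing $\mu\cap D_p^2\neq\Delta_{D_p}$, i.e. that $\mu$ is not entirely ``outside'' $D_p$. Here I would apply the minimality of $\mu$ to the congruence $\nabla_{D_p}\cup\Delta_D$ (a congruence on $\underline{D}$ by Proposition~\ref{t9}): since $|D_p|>1$ we have $\nabla_{D_p}\neq\Delta_{D_p}$, hence $\nabla_{D_p}\cup\Delta_D\in Con(\underline{D})\setminus\{\Delta_D\}$, so $\mu\subseteq\nabla_{D_p}\cup\Delta_D$. Choosing $(a,b)\in\mu$ with $a\neq b$ (possible because $\mu\neq\Delta_D$) forces $(a,b)\in\nabla_{D_p}$, hence $a,b\in D_p$ and $(a,b)\in\mu\cap D_p^2$; so $\mu\cap D_p^2\neq\Delta_{D_p}$.

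Once this is established, minimality is immediate: for any $\theta\in Con(\underline{D}_p)\setminus\{\Delta_{D_p}\}$ we have $\theta'\in Con(\underline{D})\setminus\{\Delta_D\}$, so $\mu\subseteq\theta'$, and intersecting with $D_p^2$ gives $\mu\cap D_p^2\subseteq\theta'\cap D_p^2=\theta$. Thus $\mu\cap D_p^2$ is the least element of $Con(\underline{D}_p)\setminus\{\Delta_{D_p}\}$, and Proposition~\ref{t4} yields that $\underline{D}_p$ is sub-directly irreducible.
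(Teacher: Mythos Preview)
Your proof is correct and follows essentially the same strategy as the paper's: both hinge on Proposition~\ref{t9} (extending congruences from $\underline{D}_p$ to $\underline{D}$ via $\theta\mapsto\theta\cup\Delta_D$) together with Proposition~\ref{t4}. The paper compresses the argument into the single identity $\bigcap(Con(\underline{D})\setminus\{\Delta_D\})=(\bigcap(Con(\underline{D}_p)\setminus\{\Delta_{D_p}\}))\cup\Delta_D$, whereas you unpack this carefully; in particular, your use of $\nabla_{D_p}\cup\Delta_D$ to force $\mu\cap D_p^2\neq\Delta_{D_p}$ is exactly the step the paper leaves implicit.
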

	
	\begin{proof}
		We assume that $\underline{D}$ is sub-directly irreducible. 
		Since $D_{p}$ is a sub-algebra of $\underline{D}$, the restriction of every congruence relation of $\underline{D}$ on $D_{p}$ is also a congruence relation on $\underline{D}_{p}$. By Proposition \ref{t9}, we conclude that $\cap (Con(\underline{D})\setminus\{\Delta_{D}\})=(\cap (Con(\underline{D}_{p})\setminus\{\Delta_{D_{p}}\}))\cup\Delta_{D}$. $\underline{D}_{p}$ is sub-directly irreducible by Proposition \ref{t4}.
	\end{proof}
	Before continuing, we observe the following: 
	\begin{remark}\label{rem5}
		Let $\underline{D}$ be a dBa and $a$ and $b$ be two elements of $D$ such that $a\sqcap a=b\sqcap b$ and  $a\sqcup a=b\sqcup b$. Then the 
		relation $\theta=\Delta_{D}\cup\{(a,b), (b,a)\}$ is a congruence relation of $\underline{D}$.
	\end{remark}
	
	The next proposition characterizes all sub-directly irreducible dBas $\underline{D}$ of type $I$ and $II$.
	
	\begin{proposition}\label{p20} 
		
		\begin{enumerate}
			\item  The 
			algebras $\underline{D}_{2,I}$ and $\underline{D}_{2,III}$ are up to isomorphism, the only  sub-directly irreducible double Boolean algebra of type $I$ having more than one element.
			\item  The 
			algebras  $\underline{D}_{2,II}$ and $\underline{D}_{2,III}$ are up to isomorphism, the only  sub-directly irreducible double Boolean algebra of type $II$ having more than one element.
		\end{enumerate}
	\end{proposition}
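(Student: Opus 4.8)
The plan is to push everything down to the Boolean algebra $\underline{D}_{\sqcap}$ through the pure part $\underline{D}_{p}$, and then to deal with the elements of $D\setminus D_{p}$ separately. I will prove item~(1); item~(2) is its mirror image under the duality $\sqcap\leftrightarrow\sqcup$, $\neg\leftrightarrow\lrcorner$, $\bot\leftrightarrow\top$, which carries $\underline{D}_{2,I}$ to $\underline{D}_{2,II}$ and fixes $\underline{D}_{2,III}$. The starting observation is that a dBa of type~$I$ is trivial, so $D_{\sqcap}\cap D_{\sqcup}=\{\top\sqcap\top\}=\{\bot\sqcup\bot\}$; as $\bot\sqcup\bot\in D_{\sqcup}=\{\top\}$ this forces $\top\sqcap\top=\bot\sqcup\bot=\top$, hence $\top\in D_{\sqcap}$ and therefore $D_{p}=D_{\sqcap}\cup\{\top\}=D_{\sqcap}$. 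Moreover $\underline{D}_{p}$ is a pure dBa of type~$I$, so Corollary~\ref{c1} provides a lattice isomorphism $Con(\underline{D}_{p})\cong Con(\underline{D}_{\sqcap})$, and by Proposition~\ref{p4} the values of $\sqcap,\neg$ lie in $D_{\sqcap}$ and those of $\sqcup,\lrcorner$ in $D_{\sqcup}=\{\top\}$.

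First I would determine $|D_{p}|$ when $\underline{D}$ is subdirectly irreducible. If $\bot=\top$, then $\bot$ is simultaneously the least and the greatest element of the Boolean algebra $\underline{D}_{\sqcap}$, so $D_{p}=D_{\sqcap}=\{\bot\}$. If $\bot\ne\top$ then $|D_{\sqcap}|=|D_{p}|>1$, so by Lemma~\ref{l6} the algebra $\underline{D}_{p}$ is subdirectly irreducible; transporting this along $Con(\underline{D}_{p})\cong Con(\underline{D}_{\sqcap})$ makes $\underline{D}_{\sqcap}$ a subdirectly irreducible Boolean algebra with more than one element, hence $\underline{D}_{\sqcap}\cong\underline{2}$ by Lemma~\ref{l3}, so $D_{p}=D_{\sqcap}=\{\bot,\top\}$ and $\underline{D}_{p}\cong\underline{D}_{2,I}$. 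Thus a subdirectly irreducible type~$I$ dBa satisfies $|D_{p}|\in\{1,2\}$.

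Next I would split into the two cases. If $|D_{p}|=2$ and $\underline{D}$ is pure, then $\underline{D}=\underline{D}_{p}\cong\underline{D}_{2,I}$. If $|D_{p}|=2$ and $\underline{D}$ is not pure, pick $a\in D\setminus\{\bot,\top\}$; since $a\sqcup a=\top$ and $a\sqcap a\in\{\bot,\top\}$ there is $e\in\{\bot,\top\}$ with $e\sqcap e=a\sqcap a$ and $e\sqcup e=a\sqcup a$, so Remark~\ref{rem5} yields the congruence $\theta_{a}=\Delta_{D}\cup\{(a,e),(e,a)\}$, while Proposition~\ref{t9} applied to $\nabla_{D_{p}}$ yields the congruence $\bar\theta=\Delta_{D}\cup D_{p}^{2}$. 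Both are atoms of $Con(\underline{D})$ and they are distinct because $a\notin\{\bot,\top\}$, so $\theta_{a}\cap\bar\theta=\Delta_{D}$ and $Con(\underline{D})\setminus\{\Delta_{D}\}$ has no least element, contradicting Proposition~\ref{t4}; hence this subcase is empty. Finally, if $|D_{p}|=1$ then $\bot=\top$ and every operation of $\underline{D}$ is constantly $\bot$ (all values lie in $D_{p}=\{\bot\}$), so every equivalence relation on $D$ is a congruence; by Proposition~\ref{t4} subdirect irreducibility then forces $|D|=2$, and the resulting algebra is isomorphic to $\underline{D}_{2,III}$. Conversely $\underline{D}_{2,I}$ and $\underline{D}_{2,III}$ have cardinality $2$ and are simple, hence subdirectly irreducible, and are of type~$I$; this completes~(1), and~(2) follows by duality.

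The step I expect to be the real obstacle is the non-pure case with $|D_{p}|=2$: the pure-part reduction only constrains $\underline{D}_{p}$, so one has to produce by hand two incomparable minimal congruences of $\underline{D}$, and this is exactly where Remark~\ref{rem5} (to glue an outside element to $\bot$ or $\top$) and Proposition~\ref{t9} (to lift the full congruence of $\underline{D}_{p}$) must be combined. A secondary subtlety, worth flagging, is that type~$I$ dBas may be infinite (Example~\ref{e2}), so Lemma~\ref{l2} is not directly available and the bound $|D|\le 2$ has to be squeezed out of subdirect irreducibility as above.
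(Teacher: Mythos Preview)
Your argument is correct and uses the same toolkit as the paper (Lemma~\ref{l6}, Corollary~\ref{c1}, Lemma~\ref{l3}, Remark~\ref{rem5}, Proposition~\ref{t9}), but the organization differs in a way worth recording. The paper first splits on purity and, in the non-pure case with $|D_{p}|=2$, further splits on $|D|=3$ versus $|D|>3$; the $|D|=3$ subcase is dispatched via Lemma~\ref{l2} (finite simplicity), and only the $|D|>3$ subcase uses the two-congruence trick. You instead begin by invoking Lemma~\ref{l6} and the isomorphism $Con(\underline{D}_{p})\cong Con(\underline{D}_{\sqcap})$ to pin down $|D_{p}|\in\{1,2\}$ up front, and then in the non-pure $|D_{p}|=2$ case you pair the Remark~\ref{rem5} congruence $\theta_{a}$ directly against the lifted $\bar\theta=D_{p}^{2}\cup\Delta_{D}$, which works uniformly for every $|D|\ge 3$ and avoids any appeal to Lemma~\ref{l2}. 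This is a mild but genuine streamlining: your route needs one fewer case distinction and never touches the finite-only lemma, which is consonant with your closing remark that type~$I$ algebras can be infinite. One cosmetic point: you call $\theta_{a}$ and $\bar\theta$ ``atoms'' of $Con(\underline{D})$, but the argument only needs (and only establishes) that they are distinct nontrivial congruences with $\theta_{a}\cap\bar\theta=\Delta_{D}$; the atom claim is unnecessary and, for $\bar\theta$ at least, would require a separate check.
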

	
	\begin{proof}
		\begin{enumerate}
			\item By Proposition \ref{p7}, $\underline{D}_{2,I}$ and $\underline{D}_{2,III}$ are up to isomorphism, the only  \textit{two}-element dBas of type $I$.  Moreover,  $\underline{D}_{2,I}$ and $\underline{D}_{2,III}$ are sub-directly irreducible. 
			
			Let $\underline{D}$ be a dBa of type $I$ such that $|D|\ge 3$. We distinguish two cases : (i) $\underline{D}$ is pure and (ii) $\underline{D}$ is not pure.
			
			\textbf{\underline{Case 1:}}  $\underline{D}$ is pure, then by Corollary \ref{c1}, $Con(\underline{D})\cong Con(\underline{D}_{\sqcap})$. Since $\underline{D}_{\sqcap}$ is not sub-directly irreducible ( due to Lemma \ref{l3}), we conclude that $\underline{D}$ is not sub-directly irreducible.
			
			\textbf{\underline{Case 2:}} $\underline{D}$ is not pure. We discuss on the cardinality of $\underline{D}_{p}$;
			
			\begin{itemize}
				\item If $|D_{p}|=1$, then  there are $a,b\in D\backslash D_{p} $ with $a\neq b$. Set $\theta_{1}=\{(\bot,a),(a,\bot)\}\cup \Delta_{D}~~\text{and}~~\theta_{2}=\{(\bot,b),(b,\bot)\}\cup \Delta_{D}$. From Remark \ref{rem5}, $\theta_{1},\theta_{2}\in Con(\underline{D})$. Moreover $\theta_{1}\neq\Delta_{D}$, $\theta_{2}\neq\Delta_{D}$ and $\theta_{1}\cap\theta_{2}=\Delta_{D}$. Thus by Proposition \ref{t4}, $\underline{D}$ is not sub-directly irreducible.
				
				\item If $|D_{p}|=2$ and $|D|=3$, then there is $a\in D\setminus D_{p}$. From Proposition \ref{t9}, $\theta=D_{p}^2\cup\{(a,a)\}$ is a congruence relation. Thus $\theta$ is in  $Con(\underline{D})\backslash\{\Delta_{D}, \nabla_{D} \}$. Hence, by Lemma \ref{l2}, $\underline{D}$ is not sub-directly irreducible.
				
				\item If $|D_{p}|=2$ and $|D|>3$, then $|D\setminus D_{p}|>1$. 
				
				First, we show that there exists $a,b\in D$ such that 
				$a\neq b, a\sqcap a=b\sqcap b$ and $a\sqcup a=b\sqcup b$. 
				If there exists $x\in D\setminus D_{p}$ such that $x\sqcap x=x\sqcup x$, then we choose $a=x\sqcap x$ and $b=x$. Else, for every $x\in D\setminus D_{p}$, $x\sqcap x\sqsubseteq x\sqcup x$ and $x\sqcap x\neq x\sqcup x$. Therefore, for every $x\in D\setminus D_{p}$, $x\sqcap x=\bot$ and $x\sqcup x=\top$ ( due to $D_{p}=\{\bot,\top\}$). Thus $a$ and $b$ are just two different elements of $D\setminus D_{p}$. 
				
				Second, we show that $D$ is not sub-directly irreducible. Let $a,b\in D\setminus D_{p}$ such that $a\sqcap a=b\sqcap b$, $a\sqcup a=b\sqcup b$, and $a\neq b$. Set $\theta=\Delta_{D}\cup\{ (a,b),(b,a)\}$ and $\beta=D_{p}^{2}\cup\Delta_{D}$. From Remark \ref{rem5}, $\theta$ is a congruence relation. Moreover $\theta\neq\Delta_{D}$, $\beta\neq\Delta_{D}$ and $\theta\cap\beta=\Delta_{D}$. Thus, by Proposition \ref{t4}, $\underline{D}$ is not sub-directly irreducible. 
				
				\item If $|D_{p}|\ge 3$,  then applying the result obtained in case 1, $\underline{D}_{p}$ is not sub-directly irreducible. The contra-position of Lemma \ref{l6} yields that $\underline{D}$ is not sub-directly irreducible.
			\end{itemize}
			\item Dual of (1).
			
		\end{enumerate}
	\end{proof}
	
	The following proposition characterizes trivial sub-directly irreducible dbas.
	
	\begin{proposition}\label{p21}
		The double Boolean algebras $\underline{D}_{2,I}$, $\underline{D}_{2,II}$ and $\underline{D}_{2,III}$  are up to isomorphism, the only  sub-directly irreducible trivial  double Boolean algebras  having more than one element.
	\end{proposition}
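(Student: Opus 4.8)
The plan is to reduce the trivial case to the type $I$ and type $II$ cases already settled in Proposition \ref{p20}. The forward direction is immediate: each of $\underline{D}_{2,I}$, $\underline{D}_{2,II}$ and $\underline{D}_{2,III}$ satisfies $\top\sqcap\top=\bot\sqcup\bot$ (for $\underline{D}_{2,III}$ because $\bot=\top$ there), so all three are trivial, and being \emph{two}-element they are simple, hence sub-directly irreducible.

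For the converse, let $\underline{D}$ be a trivial, sub-directly irreducible dBa with $|D|>1$. I would distinguish two cases. If $D_{\sqcup}=\{\top\}$ or $D_{\sqcap}=\{\bot\}$, then $\underline{D}$ is of type $I$ or of type $II$, and Proposition \ref{p20} gives directly that $\underline{D}$ is isomorphic to one of $\underline{D}_{2,I}$, $\underline{D}_{2,II}$, $\underline{D}_{2,III}$.

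The remaining case is $D_{\sqcup}\neq\{\top\}$ and $D_{\sqcap}\neq\{\bot\}$, and here I would derive a contradiction by showing $\underline{D}$ cannot be sub-directly irreducible. Write $t:=\top\sqcap\top=\bot\sqcup\bot$. Triviality gives $D_{\sqcap}\cap D_{\sqcup}=\{t\}$; since $\bot\in D_{\sqcap}$, $\top\in D_{\sqcup}$ and neither set is a singleton, the elements $\bot$, $t$, $\top$ are pairwise distinct, so $|D_{p}|\geq 3$ while $|D_{\sqcap}|\geq 2$ and $|D_{\sqcup}|\geq 2$. Because $D_{p}$ is a subalgebra containing $\bot,\top$, it again satisfies $\top\sqcap\top=\bot\sqcup\bot$; being the largest pure subalgebra (Proposition \ref{p8}), $\underline{D}_{p}$ is pure and trivial, with $(\underline{D}_{p})_{\sqcap}=\underline{D}_{\sqcap}$ and $(\underline{D}_{p})_{\sqcup}=\underline{D}_{\sqcup}$. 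Hence Corollary \ref{c3} yields $Con(\underline{D}_{p})\cong Con(\underline{D}_{\sqcap})\times Con(\underline{D}_{\sqcup})$, a direct product of two lattices each having more than one element. In such a product $(\nabla_{D_{\sqcap}},\Delta_{D_{\sqcup}})$ and $(\Delta_{D_{\sqcap}},\nabla_{D_{\sqcup}})$ are congruences distinct from the bottom $(\Delta_{D_{\sqcap}},\Delta_{D_{\sqcup}})$ whose meet is the bottom, so $Con(\underline{D}_{p})\setminus\{\Delta_{D_{p}}\}$ has no minimum element; as $|D_{p}|\geq 3$, Proposition \ref{t4} shows $\underline{D}_{p}$ is not sub-directly irreducible. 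Since $|D_{p}|>1$, the contrapositive of Lemma \ref{l6} forces $\underline{D}$ to be not sub-directly irreducible, contradicting our assumption. Thus this case is vacuous, and together with the first case the proposition follows.

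I do not anticipate a genuine obstacle; the only step requiring a little care is checking that the pure part $\underline{D}_{p}$ inherits triviality and that its Boolean components coincide with $\underline{D}_{\sqcap}$ and $\underline{D}_{\sqcup}$, so that Corollary \ref{c3} applies and the product-of-congruence-lattices argument goes through. Everything else is a mechanical combination of Proposition \ref{p20}, Corollary \ref{c3} and Lemma \ref{l6}.
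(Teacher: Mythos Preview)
Your argument is correct and follows essentially the same route as the paper's proof: reduce to Proposition~\ref{p20} when $\underline{D}$ is of type~$I$ or~$II$, and otherwise exhibit two nontrivial congruences on the pure part whose meet is $\Delta$, then invoke Lemma~\ref{l6}. The paper carries this out by writing down the congruence generating pairs $(\{\bot\},D_{\sqcup})$ and $(D_{\sqcap},\{\top\})$ directly and splitting into a pure case and a non-pure case, whereas you pass through Corollary~\ref{c3} to view $Con(\underline{D}_{p})$ as the product $Con(\underline{D}_{\sqcap})\times Con(\underline{D}_{\sqcup})$ and treat both cases at once via $\underline{D}_{p}$; these are cosmetic differences only.
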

	
	\begin{proof}
		By Proposition \ref{p7}, $\underline{D}_{2,I}$, $\underline{D}_{2,II}$ and $\underline{D}_{2,III}$ are up to isomorphism, the only trivial \textit{two}-element dBas . Moreover,  $\underline{D}_{2,I}$, $\underline{D}_{2,II}$ and $\underline{D}_{2,III}$ are sub-directly irreducible.
		
		Let $\underline{D}$ be a trivial dBa such that $|D|\ge 3$.
		If $\underline{D}$ is of type $I$ or type $II$, then $\underline{D}$ is not sub-directly irreducible by Proposition \ref{p20}.
		Assume that $\underline{D}$ is neither of type $I$ nor of type $II$. We distinguish again two cases as in the proof of the previous proposition. :
		
		\textbf{\underline{Case 1:}} $\underline{D}$ is pure.
		Set $J=\{\bot\}$, $G=D_{\sqcap}$, $F=D_{\sqcup}$ and $H=\{\top \}$. $(J,F)$ and $(G,H)$ are two different nontrivial congruence generating pairs and $(J,F)\wedge (G,H)=(\{\bot\}, \{\top\})$.  Hence, by Proposition \ref{t4}, $\underline{D}$ is not sub-directly irreducible.
		
		\textbf{\underline{Case 2:}} $\underline{D}$ is not pure. Then, $|D_{p}|\ge 3$. Applying the result obtained in case 1, we conclude that $\underline{D}_{p}$ is not sub-directly irreducible. Hence, by Lemma \ref{l6}, $\underline{D}$ is not sub-directly irreducible.
		
	\end{proof}
	
	We continue with the characterization of sub-directly irreducible dBas $\underline{D}$ of type $IV$.
	\begin{proposition} \label{t5}
		The 
		algebras $\underline{D}_{2,I}$, $\underline{D}_{2,II}$ and $\underline{D}_{2,III}$  are up to isomorphism, the only  sub-directly irreducible  
		dBas of type $IV$ having more than one element.
	\end{proposition}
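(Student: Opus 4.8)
The whole statement will be reduced to the trivial case, which is already Proposition \ref{p21}. First, each of $\underline{D}_{2,I},\underline{D}_{2,II},\underline{D}_{2,III}$ is of type $IV$ (read off the constants: $\bot\sqcup\bot=\top\neq\bot$, resp. $\top\sqcap\top=\bot\neq\top$, resp. $\bot=\top$) and is sub-directly irreducible because it is simple; by Proposition \ref{p7} these three together with $\underline{2}$ exhaust the \textit{two}-element dBas, and $\underline{2}$ is of type $III$, so the three listed algebras are, up to isomorphism, exactly the \textit{two}-element dBas of type $IV$, all sub-directly irreducible. It therefore suffices to prove that every dBa $\underline{D}$ of type $IV$ with $|D|\geq 3$ fails to be sub-directly irreducible.

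\textbf{Set-up.} Fix such a $\underline{D}$. If $\underline{D}$ is trivial, Proposition \ref{p21} gives the claim, so assume $\underline{D}$ is not trivial. Then $\bot\neq\top$ (otherwise $\top\sqcap\top=\bot=\bot\sqcup\bot$), and, being of type $IV$, $\underline{D}$ satisfies $\bot\sqcup\bot\neq\bot$ or $\top\sqcap\top\neq\top$; by the $\sqcap\!\leftrightarrow\!\sqcup,\ \neg\!\leftrightarrow\!\lrcorner,\ \bot\!\leftrightarrow\!\top$ duality of the axioms we may assume $\top\sqcap\top\neq\top$. Write $m:=\top\sqcap\top=\neg\bot$. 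Then $m\in D_\sqcap\cap D_\sqcup$ (Remark \ref{r1}(2)), $\neg m=\neg(\top\sqcap\top)=\neg\top=\bot$ by $(8a)$ and $(11a)$, $m\neq\top$, and $m\neq\bot$ (if $m=\bot$ then $\underline{D}_\sqcap$ is the one-element Boolean algebra, i.e. $\underline{D}$ is of type $II$, hence trivial). Next, if $\underline{D}$ is not pure, then $\underline{D}_p$ is again a non-trivial dBa of type $IV$ with $|D_p|\geq 3>1$; granting the pure case, $\underline{D}_p$ is not sub-directly irreducible, and by the contrapositive of Lemma \ref{l6} neither is $\underline{D}$. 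So we may assume $\underline{D}$ pure; then $\underline{D}$ is regular by Proposition \ref{p11}, and by Corollary \ref{c6} the map $\theta\mapsto([\bot]_\theta\cap D_\sqcap,\,[\top]_\theta\cap D_\sqcup)$ is an isomorphism of $Con(\underline{D})$ onto $\mathfrak{C}(\underline{D})$, in which meets are computed componentwise.

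\textbf{Two transversal congruences.} I claim $(\{\bot\},F(m))$ is a congruence generating pair: $\lrcorner\{\bot\}=\{\top\}\subseteq F(m)$, and since $\neg$ reverses $\sqsubseteq$ (Proposition \ref{p4}(3)) and $\neg m=\bot$, we get $\neg F(m)=\{\bot\}$. The associated congruence $\theta_1$ is nontrivial, since $m+\top=\neg m=\bot$ and $m\cdot\top=m\sqcup m=m\in F(m)$ (Proposition \ref{p1}(4)--(5)) force $(m,\top)\in\theta_1$ while $m\neq\top$. For the second congruence I split on $\bot\sqcup\bot$. If $\bot\sqcup\bot\neq\bot$, put $n:=\bot\sqcup\bot=\lrcorner\top\in D_\sqcap\cap D_\sqcup$, $n\neq\bot$; dually, $(I(n),\{\top\})$ is a congruence generating pair ($\neg\{\top\}=\{\bot\}\subseteq I(n)$, and $\lrcorner n=\lrcorner\bot=\top$ gives $\lrcorner I(n)=\{\top\}$), and the associated $\theta_2$ is nontrivial because $(n,\bot)\in\theta_2$ with $n\neq\bot$. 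If instead $\bot\sqcup\bot=\bot$, then $\underline{D}_\sqcup$ is a Boolean algebra with least element $\bot$ and greatest element $\top$ in which $\bot<_\sqcup m<_\sqcup\top$; with $\lrcorner m$ the complement of $m$ there, I take $\theta_2$ associated to $(I(\neg\lrcorner m),\,F(\lrcorner m))$. In either subcase the componentwise meet of the two pairs equals $(\{\bot\},\,F(m)\cap\{\top\})=(\{\bot\},\{\top\})$, respectively $(\{\bot\},\,F(m)\cap F(\lrcorner m))=(\{\bot\},\,F(m\sqcup\lrcorner m))=(\{\bot\},F(\top))=(\{\bot\},\{\top\})$; hence $\theta_1\wedge\theta_2=\Delta_D$ while $\theta_1,\theta_2\neq\Delta_D$, so $\underline{D}$ is not sub-directly irreducible by Proposition \ref{t4}. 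Combining all cases, every sub-directly irreducible dBa of type $IV$ with more than one element has exactly two elements, hence is isomorphic to $\underline{D}_{2,I}$, $\underline{D}_{2,II}$ or $\underline{D}_{2,III}$.

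\textbf{Main obstacle.} The delicate point is to verify that $(I(\neg\lrcorner m),F(\lrcorner m))$ really is a congruence generating pair in the subcase $\bot\sqcup\bot=\bot$. The inclusion $\neg F(\lrcorner m)\subseteq I(\neg\lrcorner m)$ is again immediate from order-reversal of $\neg$, but $\lrcorner I(\neg\lrcorner m)\subseteq F(\lrcorner m)$ comes down to the inequality $\lrcorner m\sqsubseteq_\sqcup\lrcorner\neg\lrcorner m$, i.e. to the identity $z\sqsubseteq\lrcorner\neg z$ for $z\in D_\sqcup$ (equivalently $\neg\lrcorner x\sqsubseteq x$ for $x\in D_\sqcap$); this is not one of the routine axiom manipulations and will have to be extracted from the dBa axioms together with regularity. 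Once it is in hand, the argument above closes up.
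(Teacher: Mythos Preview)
Your overall structure matches the paper's: reduce to the trivial case via Proposition~\ref{p21}, then in the non-trivial case produce two congruence generating pairs with meet $(\{\bot\},\{\top\})$, and handle non-pure algebras through Lemma~\ref{l6}. The difference is in the choice of the second pair. The paper (in the dual form, for $\top\sqcap\top\neq\top$) simply takes $(D_\sqcap,\,F(\lrcorner m))$: the condition $\lrcorner D_\sqcap\subseteq F(\lrcorner m)$ is immediate because every $x\in D_\sqcap$ satisfies $x\sqsubseteq m$, hence $\lrcorner m\sqsubseteq\lrcorner x$. No case split on $\bot\sqcup\bot$ is needed, and the meet with $(\{\bot\},F(m))$ is $(\{\bot\},F(m\sqcup\lrcorner m))=(\{\bot\},\{\top\})$.

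Your route via $(I(\neg\lrcorner m),F(\lrcorner m))$ works too, but the paragraph labelled ``Main obstacle'' misdiagnoses what is needed and leaves the proof open. The general identity $z\sqsubseteq\lrcorner\neg z$ for all $z\in D_\sqcup$ is \emph{false}: in the dBa $\underline{D}_4$ of Example~\ref{e2}(3) one has $a\in D_\sqcup$, $\neg a=\top$, $\lrcorner\neg a=\lrcorner\top=\bot$, and $a\not\sqsubseteq\bot$. So you cannot hope to ``extract'' that identity from the axioms. What you actually need is only the instance $z=\lrcorner m$, and that one is a one-liner: $\neg\lrcorner m\in D_\sqcap$ and $m=\top\sqcap\top$ is the greatest element of $\underline{D}_\sqcap$, hence $\neg\lrcorner m\sqsubseteq_\sqcap m$, and applying the order-reversing $\lrcorner$ gives $\lrcorner m\sqsubseteq_\sqcup \lrcorner\neg\lrcorner m$. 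With this line inserted (and the nontriviality of $\theta_2$ noted via $\lrcorner m\neq\top$, which follows from $m\neq\bot\sqcup\bot=\bot$), your argument closes. Still, the paper's choice of $(D_\sqcap,F(\lrcorner m))$ is shorter and eliminates the subcase analysis entirely.
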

	
	\begin{proof}
		From Proposition \ref{p7}, $\underline{D}_{2,I}$, $\underline{D}_{2,II}$ and $\underline{D}_{2,III}$ are up to isomorphism, the only   \textit{two}-element dBas of type $IV$. Moreover,  $\underline{D}_{2,I}$, $\underline{D}_{2,II}$ and $\underline{D}_{2,III}$ are sub-directly irreducible.
		
		Let $\underline{D}$ be a dBa of type $IV$ such that $|D|\ge 3$.
		If $\underline{D}$ is trivial, then $\underline{D}$ is not sub-directly irreducible by Proposition \ref{p21}.
		Assume that $\underline{D}$ is not trivial. We distinguish again two cases :
		
		\textbf{\underline{Case 1:}} $\underline{D}$ is pure.  
		\begin{itemize}
			\item If $\bot\sqcup\bot\neq \bot $, then, set $J=I(\bot\sqcup\bot)$, $F=\{\top \}$, $G=I(\neg(\bot\sqcup\bot))$ and $H=D_{\sqcup}$.
			It is easy to see that $(J,F)$ and $(G,H)$ are two nontrivial congruence generating pairs such that the meet is $(\{\bot\}, \{\top\})$. Hence, again by Proposition \ref{t4}, $\underline{D}$ is not sub-directly irreducible.
			
			\item If $\top\sqcap\top\neq\top$, then, similarly as in the previous sub-case, we obtain that $\underline{D}$ is not sub-directly irreducible.
		\end{itemize}
		
		\textbf{\underline{Case 2:}} $\underline{D}$ is not pure. Then, $|D_{p}|\ge 3$. We use the result of case 1, and Lemma \ref{l6} to conclude.	
	\end{proof}

	We end this part by the characterization of sub-directly irreducible dBas of type $V$.
	
	\begin{proposition}\label{t10}
		The double Boolean algebras  $\mathit{\underline{2}}$ and $\underline{D}_{2,III}$ are up to isomorphism, the only  sub-directly irreducible  double Boolean algebras of type $V$ having more than one element. 
	\end{proposition}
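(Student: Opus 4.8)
The forward direction is immediate: by Corollary~\ref{co1}(5) the algebras $\underline{2}$ and $\underline{D}_{2,III}$ are simple double Boolean algebras of type $V$, and a simple algebra with more than one element is sub-directly irreducible, since then $Con(\underline{D})=\{\Delta_D,\nabla_D\}$ is a two-element chain and Proposition~\ref{t4} applies; moreover $\underline{2}\not\cong\underline{D}_{2,III}$ because $\bot\neq\top$ in the former while $\bot=\top$ in the latter. For the converse, let $\underline{D}$ be a sub-directly irreducible dBa of type $V$ with $|D|>1$. Since type $V$ implies type $III$, we have $\bot\sqcup\bot=\bot$ and $\top\sqcap\top=\top$, and since $\bot\sqcup\bot\in D_\sqcup$ and $\top\sqcap\top\in D_\sqcap$ (Proposition~\ref{p4}), this forces $\bot\in D_\sqcup\subseteq D_p$ and $\top\in D_\sqcap\subseteq D_p$. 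I would then split into the cases $|D_p|=1$ and $|D_p|>1$.

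Case $|D_p|=1$. Then $\bot=\top$ and $D_\sqcap=D_\sqcup=D_p=\{\bot\}$, so $x\sqcap x=x\sqcup x=\bot$ for every $x\in D$; by Remark~\ref{rem5}, every equivalence relation on $D$ is a congruence of $\underline{D}$. If $|D|\ge 3$, choose distinct $a,b,c\in D$: then $\Delta_D\cup\{(a,b),(b,a)\}$ and $\Delta_D\cup\{(a,c),(c,a)\}$ are congruences, both distinct from $\Delta_D$, with intersection $\Delta_D$, contradicting Proposition~\ref{t4}. Hence $|D|=2$, and since every fundamental operation of $\underline{D}$ is then constant with value $\bot$, we get $\underline{D}\cong\underline{D}_{2,III}$.

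Case $|D_p|>1$. By Lemma~\ref{l6}, $\underline{D}_p$ is sub-directly irreducible; being a Boolean algebra with more than one element, $\underline{D}_p\cong\underline{2}$ by Lemma~\ref{l3}. Thus $D_p=\{\bot,\top\}$ with $\bot\neq\top$, and since $\bot$ and $\top$ are idempotent for both $\sqcap$ and $\sqcup$ (a consequence of the dBa identities together with type $III$) we get $D_\sqcap=D_\sqcup=D_p$. It remains to prove $D=D_p$. Assume $a\in D\setminus D_p$. Then $a\sqcap a,a\sqcup a\in\{\bot,\top\}$; as $\bot\sqcap\bot=\bot\sqcup\bot=\bot$ and $\top\sqcap\top=\top\sqcup\top=\top$, evaluating axiom~$(12)$ at $x=a$ yields $(a\sqcap a)=(a\sqcap a)\sqcup(a\sqcap a)=(a\sqcup a)\sqcap(a\sqcup a)=(a\sqcup a)$, say $a\sqcap a=a\sqcup a=c\in\{\bot,\top\}$. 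Since $c\sqcap c=c=c\sqcup c$, Remark~\ref{rem5} shows that $\theta_a:=\Delta_D\cup\{(a,c),(c,a)\}$ is a congruence, and $\theta_a\neq\Delta_D$ because $a\notin D_p$ while $c\in D_p$. On the other hand $\nabla_{D_p}$ is a congruence of $\underline{D}_p$, so by Proposition~\ref{t9} the relation $\beta:=\nabla_{D_p}\cup\Delta_D=\Delta_D\cup\{(\bot,\top),(\top,\bot)\}$ is a congruence of $\underline{D}$ with $\beta\neq\Delta_D$. Because $a\notin\{\bot,\top\}$, we have $\theta_a\cap\beta=\Delta_D$, contradicting sub-direct irreducibility via Proposition~\ref{t4}. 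Therefore $D=\{\bot,\top\}$ and $\underline{D}\cong\underline{2}$.

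The delicate point is precisely this last case: knowing $\underline{D}_p\cong\underline{2}$ does not by itself rule out an extra element glued above $\bot$ and below $\top$ --- a three-element algebra with $a\sqcap a=\bot$ and $a\sqcup a=\top$ satisfies almost all the dBa identities and is even sub-directly irreducible --- so the argument must invoke axiom~$(12)$ to force the pair $(a\sqcap a,a\sqcup a)$ onto the diagonal $\{(\bot,\bot),(\top,\top)\}$, after which Remark~\ref{rem5} together with the congruence $\beta$ closes the argument.
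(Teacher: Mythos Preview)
Your proof is correct and carries out in full what the paper only sketches (the paper's proof is just ``similar to the proof of Proposition~\ref{p20}''). Your organization is in fact a bit cleaner than a direct transcription of that proof: by splitting on $|D_p|=1$ versus $|D_p|>1$ rather than on pure versus non-pure, and by invoking axiom~$(12)$ explicitly to force $a\sqcap a=a\sqcup a$ for any $a\notin D_p$ once $D_p=\{\bot,\top\}$, you treat all cardinalities uniformly with the single pair $(\theta_a,\beta)$ and avoid the separate $|D|=3$ case handled in Proposition~\ref{p20} via Lemma~\ref{l2}. The commentary in your last paragraph correctly identifies the one genuinely nontrivial step---ruling out an extra element with $a\sqcap a=\bot$, $a\sqcup a=\top$---and your use of axiom~$(12)$ together with the type~$III$ idempotencies $\bot\sqcup\bot=\bot$, $\top\sqcap\top=\top$ is exactly what is needed there.
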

	
	\begin{proof}
		
		It is similar to the proof of Proposition \ref{p20}.	
	\end{proof}
	We have then finished proving Theorem \ref{t20}.
	Our results in this section are summarized in the following table.
	\begin{table}[h]
		\begin{tabular}{|l|l|l|l|l|l|}
			\hline 
			& dBas of  & dBas of& Trivial&dBas of &dBas of\\
			&  type $I$ &  type $II$& dBas& type $IV$& type $V$\\
			\hline
			Simple & $\underline{D}_{2,I}$ and   & $\underline{D}_{2,II}$ and &$\underline{D}_{2,I}$, $\underline{D}_{2,II}$ &$\underline{D}_{2,I}$, $\underline{D}_{2,II}$ &$\underline{\textit{2}}$ and    \\
			& $\underline{D}_{2,III}$  &  $\underline{D}_{2,III}$& and $\underline{D}_{2,III}$& and $\underline{D}_{2,III}$& $\underline{D}_{2,III}$   \\
			\hline
			Sub-directly &  $\underline{D}_{2,I}$ and   & $\underline{D}_{2,II}$ and &$\underline{D}_{2,I}$, $\underline{D}_{2,II}$ &$\underline{D}_{2,I}$, $\underline{D}_{2,II}$&$\underline{\textit{2}}$ and   \\
			irreducible&   $\underline{D}_{2,III}$  &  $\underline{D}_{2,III}$& and $\underline{D}_{2,III}$& and $\underline{D}_{2,III}$& $\underline{D}_{2,III}$  \\
			\hline
		\end{tabular}
		
		\caption{Simple and sub-directly irreducible dBas}
	\end{table}

	\section{Conclusion and Further Research}
	In this work, 
	we have characterized pure and trivial double Boolean algebras as glued sum of two Boolean algebras. We have also characterized simple double Boolean algebras, and determined up to isomorphism all sub-directly irreducible double Boolean algebras of the class of trivial double Boolean algebras, the class of  type $I$, $II$, $IV$ and $V$ as specified in the document. The characterization of infinite sub-directly irreducible double Boolean algebras of type $III$ is still open and  
	will be addressed 
	in 
	future work.
	
	\subsection*{Data availability}
	Data sharing not applicable to this article as datasets were neither generated nor analyzed.
	
	\subsection*{Compliance with ethical standards}
	The authors declare that they have no conflict of interest.



\begin{thebibliography}{99}
		
		\bibitem{1}
		BALBIANI, P.:
		\textit{Deciding the word problem in pure double Boolean algebras}.
		Journal of Applied Logic, 2012, pp. 260--273.
		
		
		\bibitem{3}
		BURRIS, S.---SANKAPPANAVAR, H:
		\textit{A course in universal algebra}.
		Springer Verlag GTM78, 1981.
		
		
		\bibitem{5}
		GRATZER, G.: \textit{Lattice theory}. Foundation, Birkhäuser, 2011.
		
		\bibitem{6}
		KWUIDA, L.:
		\textit{A contextual generalization of Boolean algebras}.
		PhD thesis, TU Dresden, Shaker Verlag, 2004.
		
		\bibitem{7}
		KWUIDA, L.:
		\textit{Prime ideal theorem for double Boolean algebras}.
		Discussiones Mathematicae-General Algebra and Applications, 2007, pp. 263--275.
		
		\bibitem{8}
		PROSENJIT, H.---MOHUA, B.:
		\textit{Remarks on prime ideal and representation theorem for double Boolean algebras}.
		In: CLA 2020, F. J. Valverde-Albacete and M. Trnecka, Eds. CEUR Workshop Proceedings, 2020, pp. 83--94.
		
		\bibitem{9}
		PROSENJIT, H.---MOHUA, B.:
		\textit{Topological representation of double Boolean algebra}.
		Algebra Universalis, 2023.
		
		\bibitem{10}
		TENKEU, Y. L. J.---TEMGOUA, E. R. A.---KWUIDA, L:
		\textit{Filters, ideals and congruence on double Boolean algebras}.
		In: Springer Nature Switzerland AG, A Braud and al., 2021, pp. 270--280.
		
		\bibitem{11}
		VORMBROCK, B.:
		\textit{Double Boolean algebra}.
		PhD thesis, TU Darmstadt, 2005.
		
		
		\bibitem{13}
		WILLE, R.:
		\textit{Restructuring lattice theory: an approach based on hierarchies of concepts}.
		In:
		I. rival (ed.), Ordered Sets Reidel, 1982, pp. 445--470. 
		
		\bibitem{14}
		WILLE, R.:
		\textit{
		Boolean concept logic}.
		In: Bernhard Ganter and Guy W. Mineau, Conceptual Structures: Logical Linguistic, and Computational Issue,
		Springer Berlin Heidelberg, 2000, pp. 317--331.  
		
		
	\end{thebibliography}
\end{document}